\documentclass[11pt]{article}

\usepackage[a4paper,margin=1.15in]{geometry}
\usepackage{amsmath,amssymb,amsthm,mathtools}
\usepackage{enumitem}
\usepackage{booktabs,tabularx,array,needspace}
\usepackage{microtype}
\usepackage[noadjust]{cite}
\usepackage[hidelinks]{hyperref}

\newtheorem{theorem}{Theorem}[section]
\newtheorem{lemma}[theorem]{Lemma}
\newtheorem{corollary}[theorem]{Corollary}
\newtheorem{proposition}[theorem]{Proposition}
\newtheorem{question}{Question}[section]
\theoremstyle{remark}
\newtheorem{remark}[theorem]{Remark}
\newtheorem{definition}[theorem]{Definition}

\theoremstyle{plain}
\newtheorem{maintheorem}{Theorem}

\newcommand{\R}{\mathbb R}
\newcommand{\C}{\mathbb C}

\newcommand{\Z}{\mathbb Z}
\newcommand{\Hm}{\mathcal H}
\newcommand{\dist}{\operatorname{dist}}
\newcommand{\diam}{\operatorname{diam}}
\newcommand{\supp}{\operatorname{supp}}
\newcommand{\Crit}{\operatorname{Crit}}
\newcommand{\ind}{\operatorname{ind}}
\newcommand{\tr}{\operatorname{tr}}
\newcommand{\diver}{\operatorname{div}}
\newcommand{\Dgm}{\operatorname{Dgm}}

\newcommand{\dd}{\,d}

\newcommand{\Ep}{E_p}

\newcommand{\loc}{\mathrm{loc}}

\newcommand{\avg}{\operatorname{avg}}
\newcommand{\Sph}{\mathbb S}
\hypersetup{
 pdftitle={Monotone Sobolev functions: approximation, critical points, and level sets},
 pdfauthor={Deguang Zhong},
 pdfkeywords={Lebesgue monotonicity, Sobolev approximation, p-harmonic functions, level sets}}
\title{\bfseries Monotone Sobolev functions:\\
approximation, critical points, and level sets}
\author{Deguang Zhong\\[0.4em]
\small Institute of Applied Mathematics, Shenzhen Polytechnic University,\\
\small Shenzhen 518055, China\\
\small \texttt{huachengzhon@163.com}}
\date{}
\begin{document}
\maketitle

\begin{abstract}
We give an affirmative answer to the planar local smoothing problem in
Question~1.7 of D.~Ntalampekos and positive and negative answers to the
basic approximation and level-set parts of his higher-dimensional
Question~1.8. In every dimension $n\ge2$, each continuous Lebesgue
monotone function in $W^{1,p}$ on a bounded open set admits uniform and
strong $W^{1,p}$ approximation by monotone $C^{1,\alpha}_{\loc}$
functions, with unchanged Sobolev boundary values and no increase of
the $p$-Dirichlet energy, for $1<p<\infty$. The energy can be made
strictly smaller unless the original function is $p$-harmonic.
In the plane we obtain smooth local replacement at every isolated
$p$-harmonic critical point, with arbitrarily small $C^1$ and Sobolev
error. A point of gradient index $-m$ can be resolved into exactly
$m$ nondegenerate saddles. Together with Ntalampekos's planar theorem,
this gives smooth monotone density with fixed boundary values; at a
nonsmooth $p$-harmonic minimizer the energy increase is unavoidable
but can tend to zero. In dimensions $n\ge3$, an explicit Lipschitz
monotone function has a nonmanifold point on every level in an interval,
although it admits smooth monotone approximation. A product extension
of a planar homogeneous $7$-harmonic function also rules out a general
discrete exceptional set under the exact boundary and energy constraints.
The answer to Question~1.8 thus distinguishes the five basic
approximation properties from the stronger topological and
exceptional-set conclusions. Applications include constrained integral functionals,
strict $BV$ convergence of superlevel sets, nonlinear flux convergence,
and stability of persistence diagrams under tameness assumptions.
\end{abstract}

\medskip
\noindent\textbf{Keywords.} Lebesgue monotonicity, Sobolev approximation,
$p$-harmonic function, double-obstacle problem, quasiregular gradient,
critical point, level-set topology.

\medskip
\noindent\textbf{2020 Mathematics Subject Classification.}
Primary 46E35, 35J92; Secondary 30C65, 49J40, 49J45, 54F35.

\section{Introduction}\label{sec1}

\subsection{Ntalampekos's questions and the approximation problem}

The principal results of this paper concern two questions posed by
D.~Ntalampekos~\cite[Questions~1.7 and~1.8]{N2020a}. We answer
the planar local smoothing problem in Question~1.7 affirmatively.
For Question~1.8, we establish the higher-dimensional analogue of
the five basic approximation properties, disprove the level-set
manifold assertion, and exhibit an obstruction to the additional
discrete-exceptional-set conclusion. These answers form the common
purpose of the approximation theorems and the counterexamples below.

The maximum and minimum principles constrain a scalar function without
prescribing an equation. Lebesgue introduced this notion of monotonicity
in his treatment of the Dirichlet problem~\cite{L1907}. Ordinary
Sobolev smoothing, although available by the Meyers--Serrin
theorem~\cite{MS1964}, need not preserve both principles. The problem
therefore concerns simultaneous control of regularity, boundary
values, energy, and the geometry of a scalar field.

\Needspace{8\baselineskip}
\begin{definition}\label{def1}
Let $M$ be a smooth manifold without boundary and without compact
connected components. A continuous function $f:M\to\R$ satisfies
the maximum principle if
\[
 \max_{\overline V}f=\max_{\partial V}f
 \qquad\text{for every nonempty open }V\Subset M.
\]
It satisfies the minimum principle if $-f$ satisfies the maximum
principle. It is monotone in the sense of Lebesgue if it satisfies
both, and strictly monotone if neither extremum is attained in the
interior of $V$ for any such $V$.
\end{definition}

Here $V\Subset M$ means that $\overline V$ is compact and contained
in $M$. Unless another space is specified, $M$ is an open subset of
Euclidean space. Strict monotonicity is equivalent to the absence
of local maxima and minima, including nonstrict local extrema:
an interior maximizer on $\overline V$ is a local maximum, and a
local maximum is an interior maximizer on a sufficiently small
closed coordinate ball. Thus a $C^1$ function with nonvanishing
gradient is strictly monotone. Monotone functions may have flat
regions. Our conventions agree with~\cite{L1907,N2020a}, including
\cite[Definition~2.17]{N2020a} for strictness. Scalar Lebesgue
monotonicity differs both from coordinate order monotonicity and
from monotonicity of a mapping defined by connected point preimages.

For $1<p<\infty$, we use the standard Sobolev spaces
$W^{1,p}(\Omega)$ and $W^{1,p}_0(\Omega)$; the latter is the closure
of $C_c^\infty(\Omega)$ in the Sobolev norm~\cite{B2011,EG2015}.
The condition $v-u\in W^{1,p}_0(\Omega)$ specifies unchanged
Sobolev boundary values without any regularity assumption on
$\partial\Omega$. On a Lipschitz domain it is equivalent to
equality of traces. We write
\[
 \Ep(v;D)=\int_D|\nabla v|^p\dd x,
 \qquad \Ep(v)=\Ep(v;\Omega)
\]
when the underlying domain is understood. All these energies and
Sobolev norms are Euclidean. A subscript $\loc$ denotes local
regularity, with no global bound on the corresponding local norms.

Manfredi~\cite{M1994} developed weak monotonicity through Sobolev
truncation principles. Haj\l asz and Mal\'y~\cite{HM2002} constructed
approximations by locally weakly monotone functions while controlling
nonlinear expressions in the gradient. Their starting functions
need not be monotone, and the approximants need not be continuous
or smooth. Here continuity and scalar monotonicity are retained
throughout, and the objective is improved differentiability in a
fixed boundary-value class.

Ntalampekos~\cite[Theorem~1.4]{N2020a} proved that almost every level
of a continuous monotone planar Sobolev function is an embedded
one-dimensional topological submanifold with locally finite length.
His Theorem~1.6 gives monotone $C^{1,\alpha}_{\loc}$ approximation
with uniform and strong Sobolev convergence, fixed boundary values,
and a nonincreasing energy bound. The approximants are $p$-harmonic
on a set of measure arbitrarily close to full and smooth off a
discrete set; when $p=2$, they are smooth everywhere.
We retain the numbering of the source questions. The first concerns
local smoothing in the plane; the second asks for higher-dimensional
analogues of the two cited planar theorems.

\setcounter{question}{6}
\begin{question}\label{que1}
For a planar $p$-harmonic function with $1<p<\infty$ and $p\ne2$,
is smooth monotone approximation possible near a critical point,
simultaneously in the uniform and $W^{1,p}$ topologies?
\end{question}

This is the planar problem in~\cite[Question~1.7]{N2020a}.
No nonincreasing-energy condition is imposed there. Our local theorem
also keeps the function unchanged outside any prescribed neighborhood
and controls the $C^1$ error. The case $p=2$, included in our theorem,
already has smooth harmonic input.

\begin{question}\label{que2}
Do the level-set and approximation conclusions of
\cite[Theorems~1.4 and~1.6]{N2020a} admit higher-dimensional analogues?
\end{question}

Question~\ref{que2} restates~\cite[Question~1.8]{N2020a}, which
places no separate higher-dimensional restriction on $p$. The cited
planar level-set theorem assumes $1\le p\le\infty$, whereas the
approximation theorem assumes $1<p<\infty$. Our Lipschitz level-set
example covers every Sobolev exponent in the former range.
Theorem~\ref{thm1} gives
the affirmative answer for approximation properties (A)--(E),
whereas Theorems~\ref{thm5} and~\ref{thm6} give the negative
answers for the level-set assertion and the general discrete
exceptional-set refinement, respectively. Theorem~\ref{thm2}
answers Question~\ref{que1}; Theorems~\ref{thm3}
and~\ref{thm4} supply its saddle-resolution refinement and
global planar consequence.

\subsection{Approximation and critical-point replacement}

\begin{maintheorem}\label{thm1}
Let $n\ge2$, let $\Omega\subset\R^n$ be bounded and open, and
let $1<p<\infty$. If $u\in C(\Omega)\cap W^{1,p}(\Omega)$
is monotone, there are $\alpha=\alpha(n,p)\in(0,1)$ and monotone
functions $u_j\in W^{1,p}(\Omega)$ such that
\begin{enumerate}[label=\textup{(\Alph*)},leftmargin=2.8em]
\item $u_j\in C^{1,\alpha}_{\loc}(\Omega)$ for every $j$;
\item $\sup_\Omega|u_j-u|\longrightarrow0$;
\item $u_j\longrightarrow u$ strongly in $W^{1,p}(\Omega)$;
\item $\Ep(u_j;\Omega)\le\Ep(u;\Omega)$ for every $j$;
\item $u_j-u\in W^{1,p}_0(\Omega)$ for every $j$.
\end{enumerate}
If $u$ is not $p$-harmonic, the inequalities in
\textup{(D)} can all be made strict. No boundedness of $u$,
continuity on $\overline\Omega$, or boundary regularity is assumed.
\end{maintheorem}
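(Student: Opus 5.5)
We follow Ntalampekos's planar scheme from \cite[Theorem~1.6]{N2020a} and replace $u$ by $p$-harmonic functions on most of $\Omega$. Each replacement is a solution of a double-obstacle problem, so the result stays monotone and the energy does not increase. The planar steps that really need $n=2$ are the level-set structure and the discreteness of critical points. In dimension $n$ we avoid both, because we only need $C^{1,\alpha}_{\loc}$ regularity. That regularity comes from the double-obstacle problem with continuous obstacles: the solution is $p$-harmonic where the obstacles are inactive, and near contact sets it is pinched between barriers.

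\emph{Step 1 (exhaustion and obstacles).} Fix $\varepsilon>0$ and an exhaustion $\Omega_1\Subset\Omega_2\Subset\cdots$ of $\Omega$ by smooth open sets. Since $u$ is continuous, we can choose a strictly positive continuous function $\delta$ on $\Omega$ with $\delta\le\varepsilon$ such that $u-\delta$ and $u+\delta$ are controlled on each shell. The candidate is the minimizer $v$ of $\Ep(\cdot;\Omega)$ over
\[
 K=\{w:\ w-u\in W^{1,p}_0(\Omega),\ u-\delta\le w\le u+\delta\}.
\]
The set $K$ is nonempty and convex and contains $u$. Minimality in $K$ gives (D) and (E), and the obstacle constraint gives (B). If $u$ is not $p$-harmonic, then a competitor $u+t\varphi\in K$ with small $t$ decreases the energy, so (D) is strict.

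\emph{Step 2 (monotonicity).} Take $V\Subset\Omega$ and suppose $v$ has $\max_{\overline V}v>\max_{\partial V}v$. On the open set where $v>\max_{\partial V}v$, compare $v$ with its truncation $\min(v,c)$. Near the maximum the lower obstacle $u-\delta$ lies below $c$, because $u$ obeys the maximum principle on the corresponding component and $\delta$ is small. The truncation is therefore still in $K$ and has no larger energy, and strict convexity of the energy then forces $v\le c$. The same argument gives the minimum principle. This mirrors \cite{N2020a}, and is dimension independent apart from the choice of $\delta$.

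\emph{Step 3 (regularity and convergence; main obstacle).} Solutions of double-obstacle problems with merely continuous obstacles are only $C^{0}$, not $C^{1,\alpha}$. So we would proceed as in the planar proof and iterate. We replace $u$ by $p$-harmonic functions on a locally finite family of small balls or cells, with gluing across thin transition layers. We then regularize the obstacles: first approximate $u$ by the function $w$ obtained from Step 1, and then solve a second obstacle problem whose obstacles are $C^{1,1}$ functions sandwiched between $u\pm\delta$.

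\begin{itemize}[leftmargin=2.8em]
\item For $C^{1,1}$ obstacles, known regularity for the $p$-Laplacian obstacle problem (Lindqvist, Choe) gives $v\in C^{1,\alpha}_{\loc}$ with $\alpha=\alpha(n,p)$, which is (A).
\item Producing smooth obstacles that still preserve monotonicity of the solution is the crux. We would try to take the obstacles to be themselves monotone, for example $p$-superharmonic and $p$-subharmonic regularizations (infimal convolutions of $u$), and then verify that Step 2 still goes through.
\end{itemize}

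Finally we take $\varepsilon=1/j$. Then (B) holds, and the energy bound gives weak convergence $u_j\rightharpoonup u$ in $W^{1,p}$. Combined with $\limsup\Ep(u_j)\le\Ep(u)$ and uniform convexity of $L^p$, this yields (C).
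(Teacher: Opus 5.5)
Your proposal has a genuine gap: the step you call the crux is exactly what has to be proved, and you leave it as ``we would try''.

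\textbf{What Steps 1--2 actually give.} Take $\delta$ constant in Steps~1 and~2. Then you reproduce only the unregularized construction $T_\varepsilon u$ of Proposition~\ref{prop4}. Its minimizer is continuous (via Farnana's theorem, which your truncation implicitly needs) and monotone, and it satisfies (B)--(E). But Proposition~\ref{prop5} shows it need not be $C^1$, so (A) is precisely what remains open.

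With a variable $\delta$, Step~2 already breaks. The truncation $\min\{v,m+\delta'\}$ stays admissible only if the lower obstacle itself satisfies the maximum principle. The function $u-\delta$ has an interior strict local maximum wherever $u$ is locally constant and $\delta$ has a strict local minimum.

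\textbf{What must be constructed.} You need $C^{1,1}_{\loc}$ obstacles $L<R$ with the following properties:
\begin{itemize}
\item $L\le u\le R$, with a locally positive gap (needed for the strict form of (D));
\item $\sup|L-u|$ and $\sup|R-u|$ small;
\item $L$ satisfies the maximum principle and $R$ the minimum principle.
\end{itemize}
Only these one-sided principles are used in the truncation, not full monotonicity of the obstacles. Without such obstacles, neither (A) nor monotonicity of a regular minimizer is established.

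\textbf{Why your suggested route does not supply them.} Asking $L$ to be $p$-subharmonic is too strong. Take $u=-x_1^2$ on $(1,2)\times(-1,1)^{n-1}$. It is monotone, since its gradient never vanishes, and it is strictly $p$-superharmonic. By comparison, every $p$-subharmonic $L\le u$ lies below the $p$-harmonic $h$ with boundary values $u$. Hence $\sup(u-L)\ge\sup(u-h)>0$, so no such obstacle is uniformly close to $u$.

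An infimal convolution of $u$ alone is only semiconcave, not $C^{1,1}$. Moreover, it is the sup-convolution whose superlevel sets are unions of balls about superlevel sets of $u$, and which therefore trivially keeps the maximum principle.

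\textbf{What the paper does instead.} It takes $L=I_\mu S_\lambda u-\varepsilon/2$, and symmetrically for $R$, i.e.\ Lasry--Lions sup--inf regularization. The genuinely new step is Lemma~\ref{lem6}: the inf-convolution of a Lipschitz semiconvex function satisfying the maximum principle still satisfies it. The proof uses the identity \eqref{eq10} to show that a compact interior maximal set of $I_\mu F$ would be a maximal set of $F$ surrounded by strictly smaller values, contradicting the maximum principle for $F$.

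Because $u$ may be unbounded and is only locally uniformly continuous, Euclidean sup/inf-convolutions are not directly usable. The paper therefore works in a complete conformal metric of bounded geometry in which $u$ is uniformly continuous (Lemma~\ref{lem3}).

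Your alternative of gluing $p$-harmonic pieces across transition layers is not developed. As you note yourself, its planar version uses level-set structure specific to $n=2$ (compare Theorem~\ref{thm5}).
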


The labels (A)--(E) agree with~\cite[Theorem~1.6]{N2020a};
its gradient-norm inequality in (D) is equivalent to our energy bound.
Thus Theorem~\ref{thm1} answers affirmatively the basic
approximation part of Ntalampekos's Question~1.8 in every
dimension $n\ge3$.
The exponent in (A) is uniform in $j$; the local regularity norms
need not be. The strict assertion follows from the positive
separation of the obstacles and is proved in
Corollary~\ref{cor1}. $p$-harmonicity and its energy
characterization are recalled in Section~\ref{sec2}.

The proof of Theorem~\ref{thm1} has three ingredients.
A conformal enlargement due to M\"uller and
Nardmann~\cite{MN2015} provides a complete auxiliary metric with
bounded geometry in which the given function is uniformly
continuous. Sup--inf regularization, introduced by Lasry and
Lions~\cite{LL1986}, studied in Hilbert spaces by Attouch and
Az\'e~\cite{AA1993}, and extended to Riemannian manifolds by
Azagra and Ferrera~\cite{AF2015}, then yields locally $C^{1,1}$
one-sided approximations. The semiconvex calculus is standard
\cite{CS2004}; preservation of the maximum principle by the
second envelope, including maximal plateaus, is proved here.
Finally, a double-obstacle minimizer between these approximations
is monotone by truncation. Lieberman's regularity theorem
\cite{L1991} gives its $C^{1,\alpha}_{\loc}$ regularity.
Related obstacle regularity was developed by Choe and
Lewis~\cite{CL1991}; Farnana~\cite[Theorem~3.9]{F2009} treats continuity for
continuous double obstacles. The latter continuity result alone
does not provide the differentiable approximants needed here.

For a $C^1$ difference $h$ with bounded values and gradient on $V$,
put
\begin{equation*}
 \|h\|_{C^1(V)}=\sup_V|h|+\sup_V|\nabla h|.
\end{equation*}
When this notation is applied below, it is the difference that is
bounded; no global bound on the original function or its gradient
is required. Compactly supported differences automatically have
finite norm.

The regularity theory of the $p$-Laplace equation was developed by
Uhlenbeck~\cite{U1977}, Evans~\cite{E1982},
DiBenedetto~\cite{D1983}, Lewis~\cite{L1983}, and
Tolksdorf~\cite{T1984}; see also the nonlinear potential theory
of Heinonen, Kilpel\"ainen, and Martio~\cite{HKM1993}.
In the plane, Bojarski and Iwaniec~\cite{BI1987} connected the
complex gradient to quasiregular mappings. The isolation and
regularity of critical points were studied by
Manfredi~\cite{M1988}, Iwaniec and Manfredi~\cite{IM1989}, and
Aronsson~\cite{A1986,A1989}. The quasiregular framework is
developed in~\cite{AIM2009}. We show that regularized Dirichlet
minimizers have complex gradients with a common distortion bound.
The compactness theorem of Hinkkanen and Martin~\cite{HM2020}
then upgrades variational convergence to local $C^1$ convergence.
This is precisely the control needed for joining a replacement
across an annulus on which the original gradient is nonzero.

\begin{maintheorem}\label{thm2}
Let $V\subset\R^2$ be a domain, let $1<p<\infty$, and let $u$
be $p$-harmonic in $V$ with $\Crit(u)=\{x_0\}$. For every open
neighborhood $O$ of $x_0$ in $V$ and every $\eta>0$ there is
$\widetilde u\in C^\infty(V)$ such that
\begin{enumerate}[label=\textup{(\roman*)}]
\item $\widetilde u$ has no local maximum or minimum in $V$;
\item $\widetilde u-u$ has compact support in $O$;
\item
\begin{equation}\label{eq1}
 \|\widetilde u-u\|_{C^1(V)}
 +\|\widetilde u-u\|_{W^{1,p}(V)}<\eta.
\end{equation}
\end{enumerate}
In particular, $\widetilde u$ is strictly monotone and
$\widetilde u-u\in W^{1,p}_0(V)$.
\end{maintheorem}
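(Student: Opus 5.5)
We plan to glue a smooth model with nonvanishing or saddle-type gradient into a small disc around $x_0$, using the quasiregular structure of the complex gradient. Write $f=u_x-iu_y$. By Bojarski--Iwaniec, $f$ is quasiregular in $V$, and $x_0$ is an isolated zero of $f$ with a well-defined local index $-m\le -1$. Away from $x_0$ the function $u$ is real-analytic, since the equation is uniformly elliptic where $\nabla u\neq0$. Near $x_0$, the Stoilow factorization $f=h\circ\varphi$ suggests that $u$ is a topological saddle of multiplicity $m$.

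\textbf{Step 1: a regularized model near $x_0$.} Fix a small disc $D=B(x_0,r)\Subset O$. For $\varepsilon\downarrow0$, let $u_\varepsilon$ minimize
\[
\int_D(\varepsilon^2+|\nabla v|^2)^{p/2}
\]
with boundary values $u$ on $\partial D$. Such minimizers are smooth, because the regularized equation is uniformly elliptic with analytic coefficients. We then show that $f_\varepsilon=\partial_x u_\varepsilon-i\partial_yu_\varepsilon$ is $K$-quasiregular, with $K$ depending only on $p$ and uniform in $\varepsilon$; this follows by differentiating the regularized equation, exactly as in the $p$-harmonic case. Next, $u_\varepsilon\to u$ in $W^{1,p}(D)$ by strict convexity and $\Gamma$-convergence. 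The Hinkkanen--Martin compactness theorem then upgrades this to locally uniform convergence $f_\varepsilon\to f$, that is, $C^1_{\loc}$ convergence of $u_\varepsilon$ to $u$ in $D$. By the argument principle for quasiregular maps, on a circle $|x-x_0|=\rho<r$ where $|f|\ge c>0$, the map $f_\varepsilon$ has no zeros near that circle and has total degree $-m$ inside. Zeros of the nonconstant quasiregular map $f_\varepsilon$ are isolated with negative index. Hence $u_\varepsilon$ has no local extrema: an extremum would force a zero of positive index, or $f_\varepsilon\equiv0$ on an open set, which is impossible since $f_\varepsilon$ is a discrete open map.

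\textbf{Step 2: gluing.} Choose radii $\rho_1<\rho_2<\rho$ in the annulus $A$, where $|\nabla u|\ge c$. Let $\chi$ be a cutoff equal to $1$ on $B(x_0,\rho_1)$ and $0$ outside $B(x_0,\rho_2)$, and set
\[
\widetilde u=u+\chi\,(u_\varepsilon-u).
\]
In $A$ the $C^1$ error satisfies $|\nabla(\widetilde u-u)|\le\|u_\varepsilon-u\|_{C^1(A)}(1+|\nabla\chi|)$, which is smaller than $c/2$ for small $\varepsilon$. So $\nabla\widetilde u\neq0$ on the transition region. Inside $B(x_0,\rho_1)$ we have $\widetilde u=u_\varepsilon$, which has no extrema. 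Outside $B(x_0,\rho_2)$ we have $\widetilde u=u$, which has no critical points. Thus (i) holds.

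Smoothness is the main obstacle, because $u$ itself is only $C^{1,\alpha}$ at $x_0$. Away from $x_0$, however, $u$ is real-analytic, and $\widetilde u$ equals $u_\varepsilon$ near $x_0$. So $\widetilde u\in C^\infty(V)$, and (ii) holds with support in $B(x_0,\rho_2)\subset O$.

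For (iii), the $C^1$ error on $D$ is controlled by the uniform interior convergence away from $x_0$ together with the convergence on $B(x_0,\rho_1)$. The latter requires $C^1$ convergence up to $x_0$, which again follows from the local uniform convergence $f_\varepsilon\to f$ on compact subsets of $D$; here it is essential that Hinkkanen--Martin gives uniform convergence on compacta including $x_0$. The $W^{1,p}$ error is bounded by the $C^1$ error times $|D|^{1/p}$. Finally, strict monotonicity follows from (i) as noted after Definition~\ref{def1}, and compact support of the smooth difference gives $\widetilde u-u\in W^{1,p}_0(V)$.
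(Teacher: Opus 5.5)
Your proposal is correct and follows essentially the same route as the paper. The ingredients match: regularized minimizers of $\int(\varepsilon^2+|\nabla v|^2)^{p/2}$, a $K_p$-quasiregularity bound for the complex gradient uniform in $\varepsilon$, strong $W^{1,p}$ convergence upgraded to local $C^1$ convergence via Hinkkanen--Martin compactness, and cutoff gluing across an annulus where $|\nabla u|$ is bounded below.

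The only variation is how you exclude local extrema of $u_\varepsilon$. You use index theory: zeros of a nonconstant quasiregular map have positive degree, so $\nabla u_\varepsilon$ has negative index there, while an isolated extremum has index $+1$; and $f_\varepsilon\not\equiv0$ for small $\varepsilon$ by convergence on the circle. The paper instead uses the strong maximum principle for the nondivergence form, together with discreteness of quasiregular zeros and nonconstancy of the minimizer. Both arguments work.
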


Theorem~\ref{thm2} gives an affirmative answer to the planar
local smoothing problem in~\cite[Question~1.7]{N2020a}, with the
additional $C^1$ control in \eqref{eq1} and a change supported
in any prescribed neighborhood. It does not require the replacement to remain
$p$-harmonic where it is changed. The next theorem gives finer
control. We use the usual real-gradient index convention, so a
nondegenerate planar saddle has index $-1$.

\begin{maintheorem}\label{thm3}
Under the hypotheses of Theorem~\ref{thm2}, suppose
$\ind_{x_0}(\nabla u)=-m$. Then $m$ is a positive integer,
and $\widetilde u$ can be chosen to have exactly $m$ critical
points in $V$, all in $O$ and all nondegenerate saddles, with
pairwise distinct critical values. All the support and
approximation conclusions of Theorem~\ref{thm2} remain valid.
\end{maintheorem}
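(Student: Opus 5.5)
We start from the complex-gradient structure near an isolated critical point. By the Bojarski--Iwaniec theory, $f=u_x-iu_y$ is quasiregular in $V$. Near $x_0$ it therefore has a Stoilow factorization $f=h\circ\phi$ with $\phi$ quasiconformal and $h$ holomorphic. Since $\Crit(u)=\{x_0\}$, the point $x_0$ is an isolated zero of $f$ of some local degree $m\ge1$. The index of $\nabla u=(u_x,u_y)=\overline f$ is then $-m$, because conjugation reverses orientation; this shows that $m$ is a positive integer. A small quasiconformal change of coordinates lets us write $f$ near $x_0$ approximately as $c\,(\phi(z)-\phi(x_0))^m$.

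The construction follows the proof of Theorem~\ref{thm2}. Fix a small disc $B=B(x_0,r)$ with $\overline B\subset O$, so that $|\nabla u|\ge\delta>0$ on an annulus $A$ around $\partial B$. We first produce a smooth function $v$ on a neighbourhood of $\overline B$ that is $C^1$-close to $u$ on $A$ and that has a single isolated critical point of index $-m$ inside $B$.

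\begin{itemize}
\item One option is to take regularized Dirichlet minimizers with $u$'s boundary data.
\item A more flexible option is to mollify $u$ and correct it using the $C^1$ convergence from the Hinkkanen--Martin compactness theorem, as in Theorem~\ref{thm2}.
\end{itemize}

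We then glue $v$ to $u$ across $A$ with a cutoff. Because $|\nabla u|\ge\delta$ on $A$ and $\|v-u\|_{C^1(A)}$ is small, the glued function has no critical points in $A$. By degree theory, the sum of the indices of its critical points inside $B$ equals $\ind_{\partial B}(\nabla u)=-m$.

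The key new step is to split the degenerate critical point into $m$ nondegenerate saddles. The model is $\operatorname{Re}(z^{m+1})$, perturbed to $\operatorname{Re}(P(z))$ where $P'(z)=(m+1)\prod_{k=1}^m(z-a_k)$ with distinct small $a_k$. This has exactly $m$ nondegenerate saddles, since harmonic critical points of $\operatorname{Re}P$ are simple zeros of $P'$. We choose the $a_k$ generically so that the critical values $\operatorname{Re}P(a_k)$ are pairwise distinct.

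To transplant the model, we do not need the local form of $u$ to be exactly harmonic. We argue topologically and by transversality.

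1. Replace the glued function $w$ inside a tiny disc $D$ around its critical point. Since $\nabla w$ has degree $-m$ on $\partial D$ and $\partial D$ lies in a region where $\nabla w\neq0$, there is a homotopy of nonvanishing vector fields on a thin collar from $\nabla w$ to $\nabla\operatorname{Re}P$, after rescaling.
2. To realize this as gradients of functions, interpolate functions rather than fields. In polar coordinates $w$ near its critical point has $2(m+1)$ alternating sectors, as its level set structure is a $2(m+1)$-prong star. This holds because the quasiregular factorization makes $w$ a $C^1$-conjugate of a saddle of multiplicity $m$ away from the point.
3. Use a fiber-preserving isotopy matching level-set stars, so that the interpolation of $w$ with $\operatorname{Re}P\circ\psi$ keeps a nonvanishing gradient on the collar.
4. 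Apply a final generic $C^\infty$-small perturbation supported in $D$ to make all critical values distinct and all critical points nondegenerate.

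The number of critical points stays exactly $m$. Each nondegenerate critical point has index $\pm1$ and there are no extrema, so the indices are $-1$ or saddles only. The total index is $-m$, and the perturbation is small enough not to create new pairs, since the model has exactly $m$ critical points that are stable under small $C^1$ perturbation.

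The errors are controlled as in Theorem~\ref{thm2}. All modifications occur in $B\subset O$ and are $C^1$-small, or small in $W^{1,p}$ because $D$ is tiny and gradients stay bounded there. Strict monotonicity follows since saddles are not extrema.

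The main obstacle is step 3: the collar interpolation keeping $\nabla\neq0$. The fallback is to straighten $w$ near the point by a $C^1$ homeomorphism built from the level-set foliation of $\operatorname{Re}(\phi^{m+1})$-type. Then $w=\operatorname{Re}(\zeta^{m+1})$ in coordinates $\zeta$ that are smooth away from $x_0$, and the perturbation $\operatorname{Re}P(\zeta)-\operatorname{Re}\zeta^{m+1}$ is cut off at a scale where it is dominated by the nonvanishing gradient.
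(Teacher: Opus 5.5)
Your argument that $m$ is a positive integer is correct and is the same as the paper's. The splitting step, however, is not established. You leave step~3 open yourself, and the fallback does not close it.

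A coordinate in which $u=c+\operatorname{Re}(\zeta^{m+1})$ does not come from the Sto\"ilow factorization of $\nabla u$. It needs the $p$-harmonic stream function $v$, with $u+iv$ quasiregular and $\zeta=(u+iv-c)^{1/(m+1)}$. This $\zeta$ is smooth off $x_0$, but at $x_0$ it is only quasiconformal, and in general not $C^1$. For the $7$-harmonic function of Lemma~\ref{lem12} one has $u\sim r^{3/2}$ and $v\sim r^4$. So $|\zeta|$ is comparable to $r^{3/4}$ off the nodal lines and to $r^2$ along them, and $|D\zeta|$ blows up.

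After splitting, generically $P'(0)\ne0$. Then $\operatorname{Re}P(\zeta)-\operatorname{Re}P(0)$ behaves like $\operatorname{Re}(P'(0)\zeta)$ near $x_0$ and is not even differentiable there. So in general the fallback output is not in $C^\infty(V)$. To repair this you would have to replace $\zeta$ inside a collar by a genuine diffeomorphism $\psi$ that agrees with $\zeta$ on the collar. It must also keep $|P'(\psi)|\,|D\psi|$ uniformly small, so that the $C^1$ part of \eqref{eq1} survives. That quantitative extension problem is exactly what is missing.

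Transplanting into the smoothed $w$ instead does not help as written, for two reasons. First, neither of your options produces a $v$ with a single critical point of index $-m$. Mollification gives no control over the types of the new critical points, and $v_\varepsilon$ may have several degenerate critical points at unknown places. Second, quasiregularity alone gives no regularity of a straightening homeomorphism at those points, so your claimed ``$C^1$-conjugacy'' is unproved.

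The missing idea is that no normal form is needed. The regularized minimizer $v_\varepsilon$ is smooth and satisfies $\tr(A_\varepsilon D^2v_\varepsilon)=0$ with $A_\varepsilon$ uniformly positive definite. This identity is unchanged when an affine function is added. By Sard's theorem, choose a small $a$ such that $-a$ is a regular value of $\nabla v_\varepsilon$ on $B_r(x_0)$. Then every critical point of $v_\varepsilon+a\cdot(x-x_0)$ there is nondegenerate. A nonzero semidefinite Hessian cannot satisfy the trace identity, so each of these points is a saddle of index $-1$. The local uniform gradient convergence of Proposition~\ref{prop6} keeps the degree on $\partial B_r(x_0)$ equal to $-m$, so there are exactly $m$ saddles. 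The proof is completed by gluing with the cutoff as in Theorem~\ref{thm2}, and adding small constants near the saddles to make the critical values distinct.
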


The saddle count uses the degree of the real gradient, the planar
elliptic index theory of Alessandrini and
Magnanini~\cite{AM1992,AM1994}, Sard's theorem~\cite{S1942},
and the Morse lemma~\cite{M1963,M1965}.

The index and Morse notions used here are reviewed in
Section~\ref{sec6}. Applying the local theorem at the
discrete exceptional points of Ntalampekos's approximants gives
the global planar statement.

\Needspace{13\baselineskip}
\begin{maintheorem}\label{thm4}
Let $\Omega\subset\R^2$ be bounded and open, let $1<p<\infty$,
and let $u\in C(\Omega)\cap W^{1,p}(\Omega)$ be monotone.
For every $\eta,\delta>0$ there is a monotone
$w\in C^\infty(\Omega)\cap W^{1,p}(\Omega)$ such that
\begin{equation}\label{eq2}
 \|w-u\|_{L^\infty(\Omega)}+\|w-u\|_{W^{1,p}(\Omega)}<\eta,
 \qquad w-u\in W^{1,p}_0(\Omega),
\end{equation}
and
\begin{equation}\label{eq3}
 \Ep(w;\Omega)\le\Ep(u;\Omega)+\eta.
\end{equation}
There is an open set $G\subset\Omega$ on which $w$ is $p$-harmonic
and for which $|\Omega\setminus G|<\delta$.
If $u$ is not $p$-harmonic, one can require
$\Ep(w;\Omega)<\Ep(u;\Omega)$.
\end{maintheorem}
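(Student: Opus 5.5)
We plan to start from Ntalampekos's planar approximation theorem \cite[Theorem~1.6]{N2020a} and then remove its exceptional set with Theorem~\ref{thm2}. Given $\eta,\delta>0$, that theorem gives a monotone $v\in C^{1,\alpha}_{\loc}(\Omega)\cap W^{1,p}(\Omega)$ with $v-u\in W^{1,p}_0(\Omega)$, $\Ep(v)\le\Ep(u)$, and
\[
 \|v-u\|_{L^\infty}+\|v-u\|_{W^{1,p}}<\eta/2 .
\]
It is $p$-harmonic on an open set $G_0$ with $|\Omega\setminus G_0|<\delta/2$ and smooth off a discrete set $S$. If $u$ is not $p$-harmonic, we use Theorem~\ref{thm1} (or the strict form of Corollary~\ref{cor1}) to get $\Ep(v)<\Ep(u)$, and we keep the gap $\varepsilon_0=\Ep(u)-\Ep(v)>0$ for later. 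We may take $G_0$ to be the interior of the contact-free region of the double-obstacle problem. Then $v$ is $p$-harmonic on $G_0$ and smooth (real analytic) wherever $\nabla v\ne0$ there. So $S\subset\Crit(v)\cap G_0$ together with points of the free boundary where smoothness fails; we would ideally like to arrange that $S$ consists of isolated critical points of the $p$-harmonic function $v|_{G_0}$. Critical points of planar $p$-harmonic functions that are not locally constant are isolated by Manfredi and by Bojarski--Iwaniec. On components of $G_0$ where $v$ is constant, $v$ is already smooth.

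Next we enumerate $S=\{x_k\}$, which is locally finite in $\Omega$. For each $k$ we choose pairwise disjoint discs $O_k\Subset G_0$ about $x_k$ such that $v$ is $p$-harmonic on $V_k:=O_k$ and $\Crit(v|_{V_k})=\{x_k\}$. We also require that the $O_k$ leave every compact set eventually and that $\sum_k|O_k|<\delta/2$. We then apply Theorem~\ref{thm2} on $V_k$ with $O=\tfrac12O_k$ and tolerance $\eta_k$, where $\sum\eta_k<\eta/2$ and the $\eta_k$ are small enough that the energy changes sum to less than $\eta$, or to less than $\varepsilon_0$ in the strict case. This is possible because the energy change on $O_k$ is controlled by the $W^{1,p}$ error via $\bigl|\,\|\nabla a\|_p-\|\nabla b\|_p\bigr|\le\|\nabla(a-b)\|_p$. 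Define $w=\widetilde v_k$ on $O_k$ and $w=v$ elsewhere. Because each modification has compact support in $\tfrac12O_k$ and the family is locally finite, $w$ is $C^\infty$. The series of compactly supported differences converges in $W^{1,p}$ and uniformly, so $w-v\in W^{1,p}_0(\Omega)$ and \eqref{eq2} and \eqref{eq3} follow. We set $G=G_0\setminus\bigcup_k\overline{O_k}$, which is open, gives $|\Omega\setminus G|<\delta$, and on it $w=v$ is $p$-harmonic.

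Monotonicity of $w$ is a local-to-global check. Take $W\Subset\Omega$ open and suppose $\max_{\overline W}w$ is attained at an interior point $y$ with $w(y)>\max_{\partial W}w$. By Theorem~\ref{thm2}(i), $y$ is not in any $\tfrac12O_k$, since there $w$ has no local extremum. We then compare with $v$. Let $W'$ be the connected component of $\{w>\max_{\partial W}w\}\cap W$ containing $y$. Since $w$ and $v$ differ only on $\bigcup\tfrac12O_k$, where $w$ is strictly monotone, we replace $W'$ by $W'\cup\bigcup\{O_k: O_k\cap W'\ne\emptyset,\ O_k\subset W\}$. The strict monotonicity of $w$ on each $O_k$, together with $w=v$ on $\partial O_k$, transfers the maximum principle from $v$ to $w$. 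Edge cases, where some $O_k$ meets $\partial W$, are handled by choosing the discs small relative to the oscillation of $v$.

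We expect the main obstacle to lie in the first paragraph. We must make sure that the non-smooth points of the approximant of \cite{N2020a} are genuinely isolated critical points interior to a region where it is $p$-harmonic, so that Theorem~\ref{thm2} applies. If that theorem only gives smoothness off a discrete set without this structure, we would instead rebuild the approximant from Theorem~\ref{thm1}'s obstacle construction. In that construction, free-boundary points have a nonzero obstacle gradient, which we would have to arrange, and a perturbation of the obstacles would place the remaining critical points in the open noncontact set.
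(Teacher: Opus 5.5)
\textbf{The gap: the hypothesis of Theorem~\ref{thm2} at the exceptional points.} Your overall route is the paper's: take Ntalampekos's planar approximant, replace it near its nonsmooth points via Theorem~\ref{thm2} on disjoint, locally finite discs of small total area, glue for monotonicity, and obtain strictness from Theorem~\ref{thm1} and Corollary~\ref{cor1}. The gap is the step you flag yourself and leave as ``we would ideally like to arrange.'' Theorem~\ref{thm2} applies at $x_k$ only if $x_k$ has a disc on which $v$ is $p$-harmonic with $\Crit=\{x_k\}$, lying in the open $p$-harmonic set. The bare statement of \cite[Theorem~1.6]{N2020a} does not give this.

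Your model for $S$ (a contact-free region and free-boundary points of a double-obstacle problem) is also not the construction in \cite{N2020a}. That construction assembles the approximant from $p$-harmonic pieces on level strips and then smooths the interfaces. The paper closes the gap by reading the construction. By Subsections~4.2.2(a),(c) of \cite{N2020a}, the only possible nonsmooth points are critical points of nonconstant $p$-harmonic pieces, and they do not accumulate in $\Omega$. The interface smoothing of Subsection~4.3 takes place in arbitrarily small, pairwise disjoint, locally finite neighborhoods whose closures can be chosen to avoid that closed discrete set. Hence the approximant stays nonconstant and $p$-harmonic near each possible nonsmooth point, which is exactly the hypothesis of Proposition~\ref{prop7}. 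Deleting these closed neighborhoods from the union of the pieces gives the open set $G^0$ with small complement.

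Your fallback would not work. The minimizer from Theorem~\ref{thm1} coincides with a merely $C^{1,1}_{\loc}$ obstacle on its coincidence set, and that set need not be discrete or of small measure. So neither smoothness off a discrete set nor $p$-harmonicity off a small set is available, and no perturbation of the obstacles is shown to produce them.

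\textbf{Two smaller repairs.} First, as written, the strict step gives Ntalampekos's $v$ an energy gap that Corollary~\ref{cor1} provides only for the obstacle minimizers. Compose instead, as the paper does: take $a$ from Theorem~\ref{thm1} with error $<\eta/2$ and $\gamma=\Ep(u)-\Ep(a)>0$. Then run the nonstrict construction on $a$ with tolerance $<\min\{\eta/2,\gamma\}$.

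Second, your monotonicity argument via components, with ``edge cases handled by small discs,'' is not a proof, and it is not needed. Only finitely many discs meet a given $\overline W$. Apply Lemma~\ref{lem2} to $v$ plus those finitely many modifications, taking $A$ to be the union of the corresponding discs, where Theorem~\ref{thm2}(i) excludes local extrema. This shows that function is monotone, and it agrees with $w$ on $\overline W$. Equivalently, apply Lemma~\ref{lem2} to the finite partial sums and pass to the uniform limit.
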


Theorem~\ref{thm4} is the global smooth-density consequence
of the affirmative answer to Ntalampekos's Question~1.7.
Theorem~\ref{thm1} and the planar smoothing theorem serve
different energy requirements. A $p$-harmonic function uniquely
minimizes the strictly convex Dirichlet integral in its boundary
class. Hence every nontrivial same-boundary modification has
strictly greater energy. At a nonsmooth minimizer the error term
in \eqref{eq3} is therefore necessary. At a nonminimizer,
Theorem~\ref{thm1} first produces an energy gap, and the
subsequent smooth approximation can be kept within that gap.

For mappings, Youngs~\cite{Y1948} studied homeomorphic
approximation of topologically monotone maps. Iwaniec, Kovalev,
and Onninen~\cite{IKO2011,IKO2012} proved planar diffeomorphic
Sobolev approximation; Hencl and Pratelli~\cite{HP2018}
established the $W^{1,1}$ case, and Iwaniec and
Onninen~\cite{IO2016} treated monotone Sobolev mappings of
planar domains and surfaces. Campbell, D'Onofrio, and
V\'itek~\cite{CDV2026} obtained diffeomorphic approximation
of locally finite piecewise affine homeomorphisms in dimensions
three and four, with Sobolev control of both maps and inverses.
The coordinatewise consequence proved here preserves scalar
monotonicity and boundary values; it does not preserve injectivity.

\subsection{Higher-dimensional obstructions and the scope of the results}

\begin{maintheorem}\label{thm5}
For every $n\ge3$ there is a Lipschitz monotone function
$U:(-2,2)^n\to\R$ such that, for every $t\in(-1,1)$,
$U^{-1}(t)$ has no topological manifold neighborhood at
\[
 P_t=(0,t,t,0,\ldots,0).
\]
Nevertheless, there are smooth monotone $U_N$ on the cube such that
\[
 U_N\longrightarrow U\quad\text{uniformly and strongly in }
 W^{1,p}((-2,2)^n)\quad(1\le p<\infty).
\]
Almost every level
of $U$ has finite $\Hm^{n-1}$ measure.
\end{maintheorem}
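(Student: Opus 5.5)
The plan is to build $U$ from finitely many affine functions using $\max$ and $\min$, with a local model near the diagonal $\{x_1=0,\ x_2=x_3\}$ that is invariant under translation along $(0,1,1,0,\ldots,0)$. It suffices to work in $n=3$ and take a product with $(-2,2)^{n-3}$; a level set of the form $L\times(-2,2)^{n-3}$ is a manifold near a point only if $L$ is. Put $y=x_2-x_3$, $s=\tfrac12(x_2+x_3)$, $w=x_1$. The candidate is $U=s+\Phi(w,y)$ with $\Phi$ piecewise linear and homogeneous of degree one, and with $\Phi(0,0)=0$.

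Then $U^{-1}(t)$ near $P_t$ is the graph $s=t-\Phi(w,y)$, which is always a manifold. So the construction must have a planar plateau. The corrected ansatz is $U=\max\{\min\{s,\,s+\Phi_1\},\,s+\Phi_2\}$-type expressions in which one affine piece is degenerate in the $s$-direction, for instance a piece equal to $s+\tfrac12|y|\cdot(\text{sign of }w)$. The target local picture is that $U^{-1}(t)$ near $P_t$ is a ``book with three pages'': a full plane together with a half-plane attached along the line $\{w=0,\ y=0\}$ transverse to the diagonal. Such a set has no topological manifold neighborhood: a small punctured neighborhood of $P_t$ in it has link a theta graph, not a circle. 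Self-similarity in $s$ then gives the same picture at every $t\in(-1,1)$, with the cube cut-off $|x_i|<2$ irrelevant there.

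Lipschitz continuity is automatic for a finite $\max/\min$ combination of affine functions. The level-set measure statement follows from the coarea formula, since
\[
\int_\R\Hm^{n-1}(U^{-1}(t))\,dt=\int_{(-2,2)^n}|\nabla U|\,dx<\infty .
\]
Monotonicity needs a separate check, because $\max$ and $\min$ do not preserve Lebesgue monotonicity in general. I will verify that $U$ has no local extrema and no interior plateau extrema. Concretely, at each point I will exhibit a direction along which $U$ strictly increases and one along which it strictly decreases, for example the $s$-direction, along which every piece is nondecreasing and some active piece is strictly increasing. Strict monotonicity then gives the maximum and minimum principles, as noted after Definition~\ref{def1}.

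For the approximants $U_N$, I will replace each $\max$ and $\min$ by a smooth convex regularization at scale $1/N$, namely $\max_N(a,b)=\tfrac12(a+b+\sqrt{(a-b)^2+N^{-2}})$, and similarly for $\min$. This gives uniform convergence with error $O(1/N)$ and gradients bounded uniformly in $N$. The gradients converge almost everywhere, off the kink set, which has measure zero, so dominated convergence gives strong $W^{1,p}$ convergence for every $1\le p<\infty$. Monotonicity of $U_N$ is again checked by showing $\partial_s U_N>0$ everywhere. This holds because $\partial_s$ of a smooth max or min is a convex combination of the $\partial_s$ of its arguments, provided every piece has nonnegative $\partial_s$ and, at each point, the combination puts positive weight on a piece with $\partial_s>0$. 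A $C^1$ function with nonvanishing gradient is strictly monotone.

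The main obstacle is the design step itself. I need explicit affine pieces so that the three-page level picture appears, while $\partial_s\ge0$ for every piece with strict positivity wherever it matters. These two requirements pull in opposite directions: pages arise from pieces that are flat in $s$, and flatness in $s$ threatens both monotonicity and the positivity needed for the $U_N$ argument. Before fixing the formula I will first test candidates on the cross-section $s=\text{const}$ and compute the link of the level set near $P_t$.
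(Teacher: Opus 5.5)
\textbf{There is a genuine gap, and it lies in the design step you defer. No function in the class you propose can work.}

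A finite $\max/\min$ combination of affine functions is affine on each simplex of some finite triangulation of the closed cube. Let $t$ differ from the finitely many values of $U$ at vertices, and let $U(x)=t$. Then $x$ lies in the relative interior of a simplex $\tau$ of positive dimension, and $U|_\tau$ is nonconstant.

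Only simplices having $\tau$ as a face meet a small neighbourhood of $x$. On each of them $U$ is affine and agrees with $U|_\tau$ on $\tau$. Choose $v$ parallel to $\tau$ with slope of $U|_\tau$ equal to $1$. Using the local conical structure at $x$, one gets $U(X+cv)=U(X)+c$ for $X$ near $x$ and $|c|$ small. So $U^{-1}(t)$ is locally the graph $Y\mapsto Y+(t-U(Y))v$ over a hyperplane transverse to $v$, and is therefore a manifold near $x$.

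Consequently such a $U$ has nonmanifold points on at most finitely many levels, never on all of $(-1,1)$. Your own observation that $s+\Phi$ has graph levels is a special case. Any exact self-similarity $U(X+ce)=U(X)+c$ makes every level a graph, so ``the same picture at every $t$'' cannot come from translation in $s$.

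Independently, the three-page book is incompatible with the monotonicity check you plan. In a small ball about a binding point, the complement of the book has three wedges. Two of them carry the same sign of $U-t$ and share a page. Every interior point of that page is then a nonstrict local extremum, with no strictly decreasing (or no strictly increasing) direction.

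Finally, your reduction claim, that $L\times(-2,2)^{n-3}$ is a manifold near a point only if $L$ is, is false for general $L$. Bing's dogbone space times $\R$ is $\R^4$. Even for a polyhedral $L$ it would need a local homology argument.

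\textbf{The missing idea is that the obstruction must come from infinitely many features accumulating at every $P_t$, not from a single local branching.} Here is how the paper does it.

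The paper takes $U=x^2+w(y,z)$ with $w=z+b$. Here $b$ is a sum of disjointly supported bumps of height $64r_k$ at dyadic scales $r_k=2^{-k}$. The bumps are centred on lattices at distance comparable to $r_k$ from the diagonal $\{y=z\}$ (Proposition~\ref{prop8}).

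The bumps give $w$ local maxima but no local minima. The term $x^2$ removes the maxima, so $U$ is strictly monotone (Proposition~\ref{prop9}).

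For each $t$ and each large $k$, one bump produces a circle in $U^{-1}(t)\cap\{x=0\}$ within $64r_k$ of $P_t$ (Proposition~\ref{prop10}). The map $(x,q,\xi)\mapsto(q-d_{t,k})/|q-d_{t,k}|$ is defined on the whole level set, because $U>t$ over the bump peak. It has degree one on that circle. Lemma~\ref{lem11} then excludes a chart in every dimension, with no product argument.

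The smooth $U_N$ are obtained by truncating the scales. The uniform error is at most $64r_{N+1}$, and the gradient error is bounded and supported on a set of measure $O(r_{N+1})$.

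Your coarea argument for the finiteness of almost every level is fine and matches the paper's.
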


Theorem~\ref{thm5} answers negatively the level-set part of
Ntalampekos's Question~1.8 for every $n\ge3$, even for Lipschitz
functions and even when smooth monotone approximation is available.

A topological $m$-manifold is a Hausdorff, second countable space
locally homeomorphic to $\R^m$; an embedded submanifold also has
ambient charts straightening it~\cite{L2011}. Theorem~\ref{thm5}
excludes even an intrinsic manifold chart, including a chart with
boundary. Arbitrarily small loops near $P_t$ remain noncontractible
in the entire level set. The displayed smooth sequence is explicit;
it is not asserted to preserve the boundary values or the one-sided
energy bound. For a fixed $1<p<\infty$, Theorem~\ref{thm1}
provides a separate $C^{1,\alpha}_{\loc}$ sequence with those
variational properties.

We normalize integer-dimensional Hausdorff measure so that it
agrees with Euclidean Lebesgue measure on an affine subspace:
\[
 \Hm^m(E)=\lim_{\delta\downarrow0}\frac{\omega_m}{2^m}
 \inf\left\{\sum_i(\diam E_i)^m:
 E\subset\bigcup_iE_i,\ \diam E_i<\delta\right\},
\]
where $\omega_m$ is the volume of the unit ball in $\R^m$;
see~\cite{EG2015,F1969}. Finite level measure is fully consistent
with the failure of the manifold property.

Level-set topology must be distinguished from level-set measure.
Federer's coarea theory~\cite{F1959,F1969} and its Sobolev
formulation by Mal\'y, Swanson, and Ziemer~\cite{MSZ2003}
control the latter. Alberti, Bianchini, and
Crippa~\cite{ABC2013} studied Lipschitz level sets and Sard-type
properties; Bourgain, Korobkov, and Kristensen~\cite{BKK2013}
obtained results under Sobolev and bounded-variation assumptions.
Related extensions of Sard's theorem are due to de
Pascale~\cite{dP2001}, Bojarski, Haj\l asz, and
Strzelecki~\cite{BHS2005}, and Figalli~\cite{F2008}.
These regularity assumptions and conclusions do not turn every
finite-measure level of a higher-dimensional Lipschitz monotone
function into a hypersurface. Theorem~\ref{thm5} gives an
explicit separation, and Section~\ref{sec12} shows that it
persists even alongside strict $BV$ convergence of superlevel sets.

On metric surfaces, Rajala's uniformization theorem~\cite{R2017}
uses scalar minimizers and their levels. Ntalampekos's metric
level-set theorem~\cite[Theorem~1.5]{N2020a} separates the
topological argument from Euclidean differentiation, and his
work on Sierpi\'nski carpets~\cite{N2020b} provides another
potential-theoretic motivation. Esmayli and Haj\l asz~\cite{EH2021}
treated coarea inequalities in metric spaces; Esmayli, Ikonen,
and Rajala~\cite{EIR2023} established such inequalities for
monotone Sobolev functions on metric surfaces. Meier and
Ntalampekos~\cite{MN2024} proved a Sobolev coarea inequality
and applied it to Lipschitz-volume rigidity. Meier, Vikman,
and Wenger~\cite{MVW2026} proved monotone Sobolev extension
results into metric surfaces with applications to uniformization.
Our auxiliary metric is only a regularization device on a
Euclidean domain; transferring the conclusions to metric surfaces
would require separate control of their Sobolev energies.

\begin{maintheorem}\label{thm6}
For every $n\ge3$ there is a monotone $7$-harmonic function
\[
 H\in C^{1,1/2}_{\loc}((-1,1)^n)\cap W^{1,7}((-1,1)^n)
\]
such that, for every $v\in W^{1,7}((-1,1)^n)$, the two conditions
\[
 v-H\in W^{1,7}_0((-1,1)^n),\qquad
 \int_{(-1,1)^n}|\nabla v|^7\dd x
 \le\int_{(-1,1)^n}|\nabla H|^7\dd x
\]
imply $v=H$ almost everywhere. The function $H$ is not $C^2$
at any point of $\{(0,0)\}\times(-1,1)^{n-2}$.
Consequently no such $v$ has a representative smooth near every
point outside a discrete subset of $(-1,1)^n$.
\end{maintheorem}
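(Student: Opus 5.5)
The plan is to take $H(x)=h(x_1,x_2)$, where $h$ is a planar homogeneous $7$-harmonic function of the Aronsson type, $h(r,\theta)=r^{k}\phi(\theta)$. The exponent and profile are chosen so that $h\in C^{1,1/2}$ near the origin but $h$ is not $C^2$ there. For the Aronsson solutions with angular frequency $2$ (quadrupole type), the homogeneity $k$ solves an explicit algebraic equation in $p$. The natural choice, which explains the number $7$, is the value of $p$ for which $k=3/2$, so that $\nabla h$ is homogeneous of degree exactly $1/2$. One expects $p=7$ to give exactly this rational exponent; this is the first thing I would verify by solving Aronsson's homogeneity relation for $p$. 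Then $\nabla H$ is $C^{1/2}$, since homogeneous of degree $1/2$ and smooth off the origin implies $1/2$-Hölder. On the other hand $D^2H\sim r^{-1/2}$ is unbounded near $\{(0,0)\}\times(-1,1)^{n-2}$, so $H$ is not $C^2$ at any point of that set. Because $H$ does not depend on $x_3,\dots,x_n$, the $7$-Laplacian of $H$ equals that of $h$. Hence $H$ is $7$-harmonic, lies in $W^{1,7}((-1,1)^n)$ since $\nabla H$ is bounded, and is continuous.

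For monotonicity I would use the standard comparison argument. Take $V\Subset(-1,1)^n$ and suppose $\max_{\overline V}H>\max_{\partial V}H$. Let $W$ be a component of $\{H>\max_{\partial V}H\}\cap V$. On $W$ the function $H$ is $7$-harmonic and equals $\max_{\partial V}H$ on $\partial W$. The comparison principle then forces $H\le\max_{\partial V}H$ on $W$, a contradiction. The minimum principle is symmetric. More generally, continuous $p$-harmonic functions are monotone, and Section~\ref{sec2} will recall this.

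For the rigidity, I would use that the Dirichlet energy is strictly convex and that $H$ minimizes it in its boundary class $H+W^{1,7}_0$. If $v$ lies in that class with $E_7(v)\le E_7(H)$, consider $w=(v+H)/2$. Strict convexity gives $E_7(w)<E_7(H)$ unless $\nabla v=\nabla H$ a.e., and the strict inequality would contradict minimality. So $\nabla(v-H)=0$ a.e. with $v-H\in W^{1,7}_0$ on the cube, and Poincaré gives $v=H$ a.e. Consequently any such $v$ has the representative $H$. This representative is continuous, so every continuous representative equals $H$. Near each point of the codimension-two set $\{(0,0)\}\times(-1,1)^{n-2}$, $H$ fails to be $C^2$. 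That set is uncountable with no isolated points, so it is not contained in any discrete set.

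The main obstacle is the explicit planar input. One must exhibit the homogeneous $7$-harmonic $h$ with gradient homogeneity exactly $1/2$ and check that its angular profile $\phi$ is smooth and $2\pi$-periodic, so that $h$ is a genuine $C^{1,1/2}$ weak solution across the origin. Failing an exact formula, I would use the hodograph (Aronsson) representation. In that representation the profile ODE for $\phi$ reduces to a first-order system with period condition $k$-dependent. The period condition yields $k=k(p)$ in closed form, and evaluating it at $p=7$ should give $k=3/2$. The remaining steps are routine.
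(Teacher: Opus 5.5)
Your proposal is correct and follows the paper's proof: product extension $H(q,\zeta)=h(q)$ of a planar four-sector Kr\'ol--Aronsson solution $h=r^{3/2}f(\theta)$, strict convexity of the $7$-energy for rigidity, and the absence of isolated points in $\{(0,0)\}\times(-1,1)^{n-2}$, with only cosmetic differences in how monotonicity and rigidity are checked. The step you left as an expectation does hold. For the four-sector profile the period condition becomes $3(p-1)k^2-(7p-6)k+4(p-1)=0$, whose larger root $k=\frac{7p-6+\sqrt{p^2+12p-12}}{6(p-1)}$ equals $3/2$ at $p=7$. The paper's Lemma~\ref{lem12} verifies this directly through the explicit parametrization $\theta=\arctan s-\tfrac12\arctan(3s/2)$, $f=\bigl((4+9s^2)/(4(1+s^2)^3)\bigr)^{1/4}$, and it also proves the removability of the origin that you called routine.
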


Our homogeneous
$7$-harmonic example belongs to the classical separated-variable
family of Kr\'ol and Aronsson~\cite{A1986,A1989,K1973}; see
also Akman, Lewis, and Vogel~\cite{ALV2019}. The explicit
calculation identifies its exponent and singularity. The
higher-dimensional obstruction uses product extension and strict
energy convexity, rather than a new existence theorem for
homogeneous solutions.

A discrete subset has each of its points isolated in its relative
topology. Theorem~\ref{thm6} gives a further negative answer
within Ntalampekos's Question~1.8: the discrete-exceptional-set
conclusion of his planar theorem cannot extend to all dimensions
and exponents while both exact variational constraints are retained.

\subsection{Answers to Questions 1.7 and 1.8}

The correspondence between the source questions and our results
is summarized below. The distinction between the basic properties
(A)--(E) and the additional conclusions is essential to the answer
to~\cite[Question~1.8]{N2020a}.

\begin{center}
\small
\renewcommand{\arraystretch}{1.15}
\begin{tabularx}{\linewidth}{@{}>{\raggedright\arraybackslash}p{0.27\linewidth}>{\raggedright\arraybackslash}p{0.16\linewidth}>{\raggedright\arraybackslash}X@{}}
\toprule
Question in~\cite{N2020a} & Result & Answer and scope\\
\midrule
1.7: planar local smoothing
& Theorem~\ref{thm2}
& Affirmative, with $C^1$ control; Theorems~\ref{thm3}
and~\ref{thm4} give saddle resolution and global smooth density.\\
1.8: approximation properties (A)--(E)
& Theorem~\ref{thm1}
& Affirmative for every $n\ge3$ and $1<p<\infty$,
with fixed boundary values and nonincreasing energy.\\
1.8: almost-everywhere manifold levels
& Theorem~\ref{thm5}
& Negative for every $n\ge3$; a Lipschitz example has
a nonmanifold point on every level in an interval.\\
1.8: discrete smoothness exception
& Theorem~\ref{thm6}
& Negative in general under (D) and (E), as shown
by a $7$-harmonic example in every $n\ge3$.\\
\bottomrule
\end{tabularx}
\end{center}

The higher-dimensional $C^{1,\alpha}_{\loc}$ theorem does not
assert that its approximants are $p$-harmonic on an open set
of measure arbitrarily close to full. Nor do the counterexamples
settle general higher-dimensional $C^\infty$ approximation when
only convergence of energies, rather than the one-sided bound,
is required. The answers to Question~1.8 stated above refer to
the precise assertions in the table. In the planar smooth result,
the arbitrarily small energy increase at a nonsmooth
$p$-harmonic minimizer is necessary; see
Proposition~\ref{prop1}.

Our applications combine the approximation theorems with established
continuity arguments for integral functionals~\cite{D2008},
coarea and perimeter theory~\cite{AFP2000}, nonlinear error
variables~\cite{BL1993,DK2008}, and persistence
stability~\cite{CEH2007}. Their common feature is a recovery
sequence satisfying the monotonicity and Sobolev boundary
constraints, with the appropriate regularity in each dimension.

\subsection{Organization of the proof}

Section~\ref{sec2} supplies the shared maximum-principle,
gluing, and energy-rigidity facts. Sections~\ref{sec3}--\ref{sec5}
prove Theorem~\ref{thm1}. Sections~\ref{sec6}--\ref{sec8}
establish the planar local and global smooth theorems.
Section~\ref{sec9} constructs the nonmanifold levels, and
Section~\ref{sec10} treats singular minimizers and their products.
The final four sections develop the variational, coarea, flux,
and persistence consequences. The planar smooth theorem uses
Ntalampekos's planar construction in addition to the local
replacement proved here; it is not inferred from arbitrary-dimensional
$C^{1,\alpha}$ density alone.

\section{Maximum principles and energy minimizers}\label{sec2}

The maximum and minimum principles will be used both for the
obstacle construction and for joining local smooth replacements.
The following characterization allows nonstrict maximal plateaus.

\begin{lemma}\label{lem1}
Let $M$ be a manifold as in Definition~\ref{def1}, in particular
without compact connected components. For $f\in C(M)$, the maximum principle
is equivalent to the following condition: for every $t\in\R$, no
connected component of $\{f>t\}$ is relatively compact in $M$.
\end{lemma}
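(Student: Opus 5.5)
We prove the two implications separately. In both, we use that in a manifold (locally connected) the connected components of an open set are open.

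\emph{Maximum principle $\Rightarrow$ no relatively compact component.} Suppose, for contradiction, that some component $V$ of $\{f>t\}$ has $\overline V$ compact in $M$. Then $V$ is a nonempty open set with $V\Subset M$.

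First, $\partial V\cap\{f>t\}=\emptyset$. Indeed, if $x\in\partial V$ had $f(x)>t$, a small connected neighborhood of $x$ inside $\{f>t\}$ would meet $V$. Its union with $V$ would then be a connected subset of $\{f>t\}$ strictly larger than the component $V$, which is impossible. Hence $f\le t$ on $\partial V$.

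Next, $\partial V\neq\emptyset$. Otherwise $V$ would be open and closed, hence a connected component of $M$; being relatively compact, this component would be compact, which the hypothesis on $M$ excludes.

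Combining these, $\max_{\partial V}f\le t$. But $f>t$ at every point of $V$, so $\max_{\overline V}f>t$. This contradicts the maximum principle.

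\emph{No relatively compact component $\Rightarrow$ maximum principle.} Let $V\Subset M$ be nonempty and open, and put $s=\max_{\partial V}f$. Here $\partial V$ is nonempty by the argument just given (a nonempty open set with empty boundary would be a union of components of $M$, and its compact closure would force one of them to be compact). Suppose $\max_{\overline V}f>s$, attained at some $x\in V$, and choose $t$ with $s<t<f(x)$. Let $W$ be the component of $\{f>t\}$ containing $x$.

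We claim $W\subset V$. The set $W$ is connected and meets $V$, and it does not meet $\partial V$, since $f\le s<t$ there. So $W=(W\cap V)\cup(W\setminus\overline V)$ is a disjoint union of two relatively open sets, and connectedness gives $W\subset V$. Then $\overline W\subset\overline V$ is compact, so $W$ is a relatively compact component of $\{f>t\}$. This contradicts the hypothesis, so $\max_{\overline V}f=s$.

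The only delicate points are two. One is the nonemptiness of $\partial V$, which is exactly where the absence of compact components of $M$ enters. The other is the case of a nonstrict maximal plateau: there $f$ equals its maximum on a whole set. This is handled automatically, because we work with the strict superlevel set at a level $t$ chosen strictly between $\max_{\partial V}f$ and the interior value. Apart from these, the argument is routine point-set topology.
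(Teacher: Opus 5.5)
Your proof is correct and follows essentially the same route as the paper. For one direction, a relatively compact component has nonempty boundary (this is where the absence of compact components of $M$ enters), $f\le t$ on that boundary (the paper notes $f=t$), and this contradicts the maximum principle; for the other, the component of $\{f>t\}$ through an interior point, with $\max_{\partial V}f<t<\max_{\overline V}f$, cannot meet $\partial V$ and is therefore trapped in $V$, and your explicit check that $\partial V\neq\emptyset$ for every nonempty $V\Subset M$ is a detail the paper leaves implicit.
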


\begin{proof}
If a component $C$ of $\{f>t\}$ is relatively compact, its boundary
in $M$ is nonempty: otherwise $C$ would be a compact connected
component of $M$. Continuity and local connectedness imply
$f=t$ on $\partial C$. Applying the maximum
principle to $C$ gives a contradiction. Conversely, if the maximum
principle fails on $V\Subset M$, choose
\[
 \max_{\partial V}f<t<\max_{\overline V}f.
\]
The component of $\{f>t\}$ containing an interior point above $t$
cannot cross $\partial V$. It is therefore contained in $V$ and has
compact closure in $M$.
\end{proof}

The next gluing statement is a form of
\cite[Lemmas~2.15 and~2.19]{N2020a}. Its hypotheses distinguish
the weak maximum--minimum condition of the original function
from the absence of all local extrema in the replacement region.

\begin{lemma}\label{lem2}
Let $f\in C(V)$ be monotone, let $K\Subset A\subset V$ with $K$
compact and $A$ open, and let $w\in C(V)$ agree with $f$ on
$V\setminus K$.  If $w$ has no local extrema in $A$, then $w$ is
monotone in $V$.  Moreover, a locally uniform limit of monotone
functions is monotone.
\end{lemma}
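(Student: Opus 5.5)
By symmetry (replace $f,w$ by $-f,-w$) it suffices to verify the maximum principle for $w$. I will use the characterization of Lemma~\ref{lem1}: fix $t\in\R$, suppose a connected component $C$ of $\{w>t\}$ is relatively compact in $V$, and derive a contradiction. Since $C$ is open and relatively compact, and $V$ has no compact components, $\partial C\neq\emptyset$. By continuity $w=t$ on $\partial C$, so $w$ attains its maximum over the compact set $\overline C$ at some point $x^*\in C$, with $w(x^*)>t$.

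I would split according to where the maximizers lie. If some maximizer $x^*$ of $w$ on $\overline C$ lies in $A$, then it is an interior maximizer of $w$ on the open set $C$. Hence it is a local maximum of $w$ at a point of $A$, contradicting the hypothesis that $w$ has no local extrema in $A$. Otherwise every maximizer lies in $C\setminus A\subset V\setminus K$, where $w=f$.

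This second case is the main point. Set $s=\max_{\overline C}w$ and consider the closed set $Z=\{x\in\overline C: w(x)=s\}\subset C\setminus A$. Since $K\Subset A$, the set $Z$ is at positive distance from $K$; more simply, $Z$ is a compact subset of the open set $C\setminus K$. I would pick an open $W\Subset C\setminus K$ with $Z\subset W$. On $\overline W$ we have $f=w\le s$, with equality at points of $Z$, while on $\partial W$ we have $w<s$, because $\partial W\cap Z=\emptyset$ and $\partial W\subset C$. Thus $\max_{\partial W}f<\max_{\overline W}f$, which contradicts the maximum principle for $f$.

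The one delicacy is that $W$ must lie inside $V\setminus K$ so that $f=w$ on all of $\overline W$. This follows because $Z$ is compact and disjoint from the compact set $K$: take $W$ to be a small neighborhood of $Z$ inside $C\setminus K$. The nonemptiness of $W$ follows from $Z\neq\emptyset$. Minima are handled symmetrically, so $w$ is monotone.

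For the limit statement, let $f_j\to f$ locally uniformly with each $f_j$ monotone. Fix $V'\Subset M$. Uniform convergence on the compact set $\overline{V'}$ gives
\[
\max_{\overline{V'}}f_j\to\max_{\overline{V'}}f,\qquad \max_{\partial V'}f_j\to\max_{\partial V'}f,
\]
and $\partial V'$ is compact and nonempty. Passing to the limit in the equality $\max_{\overline{V'}}f_j=\max_{\partial V'}f_j$ gives the same equality for $f$. The minimum principle follows in the same way.
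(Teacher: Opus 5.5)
Your proof is correct, but its key step differs from the paper's. The paper works directly from the definition with an open $U\Subset V$ on which $\max_{\overline U}w>\max_{\partial U}w$. It chooses $B$ open with $K\subset B\Subset A$ and applies the maximum principle of $f$ to the large set $U\setminus\overline B$. The maximal value must then reappear on the seam $U\cap\partial B\subset A$, which produces a forbidden local maximum there.

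You instead localize around the compact set $Z$ of maximizers. Either $Z$ meets $A$, which gives an immediate contradiction, or a small neighborhood $W$ of $Z$ with $\overline W\subset C\setminus K$ is itself an open set on which $f=w$ violates the maximum principle. Your detour through Lemma~\ref{lem1} is inessential. The component $C$ only serves as a violating open set, and your argument applies verbatim to any $U$ as in the definition.

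Your route has a small advantage. If you split on whether $Z$ meets $K$ rather than $A$, the same argument shows that it suffices to exclude local extrema of $w$ at points of $K$. The paper's contradiction point lies on $\partial B$, typically in $A\setminus K$, so as written it uses the hypothesis on all of $A$. The paper's version, in turn, avoids the case split and needs no neighborhood of the maximizer set.

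The limit statement is handled exactly as in the paper.
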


\begin{proof}
Suppose that for some open $U\Subset V$ the maximum $m_0$ of $w$ on
$\overline U$ is larger than its maximum on $\partial U$.  Choose an
open set $B$ with $K\subset B\Subset A$.  No point of $U\cap A$
can attain $m_0$.  Hence a point attaining $m_0$ lies in $U\setminus A$,
and therefore in the open set $W=U\setminus\overline B$.
On $W$ and on its boundary, $w=f$, since these sets are disjoint
from $K$.  Applying monotonicity of $f$ to $W$ gives a point
$y\in\partial W$ with $w(y)=m_0$.  Such a point cannot lie in
$\partial U$, so it lies in $U\cap\partial B\subset U\cap A$.
It is then a local maximum of $w$, a contradiction.  The minimum is
handled by replacing $w$ and $f$ with their negatives.

For the second assertion, uniform convergence on $\overline U$
passes both extremum identities in Definition~\ref{def1} to the limit, for each fixed
$U\Subset V$.
\end{proof}

\subsection{The variational equation and its rigidity}

Let $V\subset\R^n$ be open. We use the following weak formulation.

\begin{definition}\label{def2}
Let $1<p<\infty$.  A continuous function
$u\in W^{1,p}_{\loc}(V)$ is \emph{$p$-harmonic} if
\begin{equation}\label{eq4}
 \int_V |\nabla u|^{p-2}\nabla u\cdot\nabla\varphi\dd x=0
 \quad\text{for every }\varphi\in C_c^\infty(V).
\end{equation}
At $\nabla u=0$ the vector $|\nabla u|^{p-2}\nabla u$ is defined to
be zero. We write $\Delta_pu=\diver(|\nabla u|^{p-2}\nabla u)$.
Throughout, the term $p$-harmonic refers to this weak equation with
the continuous representative.
\end{definition}

The weak formulation, the equivalent local minimizing property of
$\Ep$, and the comparison and strong maximum principles are standard;
see \cite{HKM1993}.  The regularity results cited in the introduction
give $u\in C^{1,\alpha}_{\loc}$ for some $\alpha>0$.
Consequently
\begin{equation*}
 \Crit(u)=\{x\in V:\nabla u(x)=0\}
\end{equation*}
is well defined.  The function is $C^\infty$ on
$V\setminus\Crit(u)$, by nondegenerate elliptic regularity
\cite{GT2001,T1984}.

\begin{proposition}\label{prop1}
Let $\Omega\subset\R^n$ be bounded and open, let $1<p<\infty$, and
let $u\in C(\Omega)\cap W^{1,p}(\Omega)$ be $p$-harmonic.  If
$w-u\in W^{1,p}_0(\Omega)$, then
\begin{equation}\label{eq5}
 \Ep(w;\Omega)\ge\Ep(u;\Omega),
\end{equation}
with equality if and only if $w=u$ almost everywhere.  Consequently,
if the continuous representative of $u$ is not smooth, every smooth
function with the same Sobolev boundary values has strictly greater
energy.
\end{proposition}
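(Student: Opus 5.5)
The plan is to use convexity of $\xi\mapsto|\xi|^p$ together with the weak equation \eqref{eq4}, extended to test functions in $W^{1,p}_0(\Omega)$. First I would extend the class of test functions. Since $u\in W^{1,p}(\Omega)$, the field $|\nabla u|^{p-2}\nabla u$ lies in $L^{p'}(\Omega)$ with $p'=p/(p-1)$. Hence the functional $\varphi\mapsto\int_\Omega|\nabla u|^{p-2}\nabla u\cdot\nabla\varphi\dd x$ is continuous on $W^{1,p}(\Omega)$ by H\"older's inequality. It vanishes on $C_c^\infty(\Omega)$ by \eqref{eq4}, so by density it vanishes on $W^{1,p}_0(\Omega)$. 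No boundary regularity is needed, because $W^{1,p}_0$ is defined as this closure.

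Next, set $\varphi=w-u\in W^{1,p}_0(\Omega)$. The pointwise convexity inequality
\[
 |b|^p\ge|a|^p+p|a|^{p-2}a\cdot(b-a),
\]
applied with $a=\nabla u$ and $b=\nabla w$ (and the convention at $a=0$), integrates to
\[
 \Ep(w;\Omega)\ge\Ep(u;\Omega)+p\int_\Omega|\nabla u|^{p-2}\nabla u\cdot\nabla\varphi\dd x=\Ep(u;\Omega).
\]
This gives \eqref{eq5}. All terms are finite since $w\in W^{1,p}(\Omega)$.

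For the equality case, strict convexity of $|\cdot|^p$ for $p>1$ makes the pointwise inequality strict whenever $b\ne a$. Equality of the integrals then forces $\nabla w=\nabla u$ almost everywhere, so $\nabla\varphi=0$ almost everywhere. An element of $W^{1,p}_0(\Omega)$ with zero gradient vanishes almost everywhere. This is the step needing care, since $\Omega$ may be disconnected and irregular. I would argue by Poincar\'e's inequality on the bounded set $\Omega$: $\|\varphi\|_{L^p}\le C\|\nabla\varphi\|_{L^p}=0$. Alternatively, extend $\varphi$ by zero to an element of $W^{1,p}(\R^n)$ with zero gradient, hence constant; the constant is zero because the support is bounded. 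The converse, that $w=u$ a.e.\ gives equality, is trivial.

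For the consequence, let $v$ be smooth with $v-u\in W^{1,p}_0(\Omega)$. If $\Ep(v)\le\Ep(u)$, the equality case gives $v=u$ almost everywhere. Both functions are continuous, so they agree everywhere. This would make the continuous representative of $u$ smooth, a contradiction. Hence $\Ep(v)>\Ep(u)$. The only substantive points are the density extension of test functions and the zero-gradient argument in $W^{1,p}_0$ on an arbitrary bounded open set; both are routine.
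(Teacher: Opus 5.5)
Your proposal is correct and follows essentially the same route as the paper. Both use the tangent-line inequality for the strictly convex integrand $|\xi|^p$, extend the weak equation to test functions in $W^{1,p}_0(\Omega)$ by H\"older's inequality and density, and apply Poincar\'e's inequality in the equality case. Both then use continuity of representatives for the smooth-competitor consequence. Your alternative extension-by-zero argument for the zero-gradient step is also valid.
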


\begin{proof}
Strict convexity of $\xi\mapsto|\xi|^p$ gives
\[
 |\nabla w|^p\ge |\nabla u|^p
   +p|\nabla u|^{p-2}\nabla u\cdot(\nabla w-\nabla u),
\]
with equality precisely when the two gradients agree.  The integral
of the last term is zero.  Indeed the weak equation extends from
$C_c^\infty(\Omega)$ to $W^{1,p}_0(\Omega)$ by H\"older's
inequality and density, and it can be tested with $w-u$.
This proves \eqref{eq5}.  Equality forces $\nabla(w-u)=0$ almost
everywhere, and Poincar\'e's inequality on $W^{1,p}_0(\Omega)$
forces $w-u=0$.  Continuous representatives that agree almost
everywhere agree everywhere.
\end{proof}

\begin{remark}\label{rem1}
Proposition~\ref{prop1} applies even when $u$ is smooth.  Any genuine
same-boundary modification of a $p$-harmonic minimizer costs positive
energy.  Theorems~\ref{thm2} and~\ref{thm3} make this cost arbitrarily
small.  In Theorem~\ref{thm4}, an already smooth $p$-harmonic function
can of course be left unchanged.  The nonincreasing-energy version
is obstructed precisely when one asks to change a minimizer while
retaining its boundary class.
\end{remark}

\section{An auxiliary metric adapted to the function}\label{sec3}

The Sobolev norms remain Euclidean. The purpose of the auxiliary
metric is only to make a possibly unbounded continuous function
uniformly continuous at a common metric scale.

We use standard Riemannian notation: $d_g$ is geodesic distance,
$\exp_x$ is the exponential map, and tangent and cotangent vectors are
identified by $g$. Completeness and the Hopf--Rinow theorem imply
existence of minimizing geodesics and compactness of closed bounded
sets within a connected component; see~\cite{dC1992}. A smooth metric has
bounded geometry when its injectivity radius has a positive uniform
lower bound and its curvature tensor and all covariant derivatives
are uniformly bounded~\cite{MN2015}. The injectivity radius at a
point measures the radius on which its exponential map is a
diffeomorphism. A ball is strongly convex if any two of its points
are joined by a unique minimizing geodesic lying in the ball;
the convexity radius measures the range of such balls~\cite{dC1992}.
The metric constructed below has bounded curvature and positive uniform
injectivity and convexity radii. Consequently there is $r>0$ such that
the relevant balls of radius $8r$ are strongly convex, and the function
\[
 c(x,y)=\tfrac12d_g(x,y)^2
\]
is smooth near the diagonal. By taking $r$ smaller, we have
\begin{equation}\label{eq6}
 \tfrac12 g_y\le \nabla_y^2c(x,y)\le2g_y
 \qquad(d_g(x,y)<8r).
\end{equation}
The corresponding mixed derivatives of $c$ are uniformly bounded on
this neighborhood. These are the usual local squared-distance
estimates from comparison geometry; see~\cite{AF2015,dC1992}.
Only their existence, and not a sharp constant, is needed.

\begin{lemma}\label{lem3}
Let $\Omega\subset\R^n$ be bounded and open and let $f\in C(\Omega)$.
There is a complete smooth Riemannian metric $g$ on $\Omega$ with
bounded geometry, positive uniform convexity radius, and the following
property: $f$ is uniformly continuous for $d_g$ within the connected
components, with a modulus independent of the component.
\end{lemma}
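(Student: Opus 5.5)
The plan is a conformal change $g=e^{2\phi}g_0$ of the Euclidean metric $g_0$, with $\phi$ growing so fast toward $\partial\Omega$ that every unit $g$-ball sits in a region where the Euclidean oscillation of $f$ is small. The Müller--Nardmann theorem~\cite{MN2015} supplies the geometric part. Every open subset of $\R^n$ with its Euclidean metric is conformal to a complete metric of bounded geometry. More precisely, any metric can be conformally enlarged, $g=e^{2\phi}g_0$ with $\phi$ at least a prescribed continuous function, so that the result is complete with bounded geometry. Positive injectivity radius together with bounded curvature then gives a positive uniform convexity radius, by standard comparison (convexity radius is bounded below by $\min\{\mathrm{inj}/2,\ \pi/(2\sqrt{K})\}$ up to constants). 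So the work is to choose the lower bound for $\phi$ that forces uniform continuity of $f$.

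First, exhaust $\Omega$ by compacts $K_1\subset K_2\subset\cdots$ with $K_k\subset\operatorname{int}K_{k+1}$ and $\bigcup_k K_k=\Omega$. Set $\delta_k=\dist(K_k,\R^n\setminus K_{k+1})>0$. On the compact set $K_{k+2}$, $f$ is uniformly continuous, so there is $\rho_k\in(0,\delta_{k+1})$ with $|f(x)-f(y)|<2^{-k}$ whenever $x,y\in K_{k+2}$ and $|x-y|<\rho_k$. Now choose a continuous $\psi$ on $\Omega$ with $e^{\psi}\ge 2^k/\rho_k$ on $K_{k+1}\setminus\operatorname{int}K_k$, and also large enough to make $g$ complete. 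Then require the Müller--Nardmann conformal factor to satisfy $\phi\ge\psi$.

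Second, verify the modulus. Let $x,y$ lie in the same component with $d_g(x,y)<1$, and take a curve $\gamma$ of $g$-length $<1$ joining them. Let $k$ be the least index with $x\in K_{k+1}$ (the case $x\in K_1$ is similar). Suppose $\gamma$ leaves $K_{k+2}$. Its Euclidean length is then at least $\delta_{k+1}$, and the portion spent in shells of index at least $k$ has Euclidean length at least $\delta_{k+1}$. There $e^{\phi}\ge 2^k/\rho_k\ge 2^k/\delta_{k+1}$, so the $g$-length is at least $2^k\ge1$, a contradiction. Hence $\gamma\subset K_{k+2}$.

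The same weighting bounds the Euclidean length of the part of $\gamma$ outside $K_k$ by $2^{-k}\rho_k$. The part inside $K_k$ is controlled by the factor on lower shells, so $f$ varies little there too. One cleanly handles this by splitting $\gamma$ at its first and last visits to each shell. The net result is the estimate
\[
|f(x)-f(y)|\le C\sum_{j\ge k-1}2^{-j}+\omega_{K_{k+2}}\bigl(2^{-k}\bigr).
\]

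This is where I expect the main difficulty: making the bound uniform, since points deep inside ($k$ small) do not get a small bound from this estimate. The fix is to weight inner shells as well. Replace the requirement by a two-scale one: for a given $\varepsilon$, take $d_g(x,y)<\theta(\varepsilon)$ instead of $<1$. Because each $\rho_k$ may be shrunk freely, arrange that $e^{\phi}\ge 2^{j}/\rho_{j,k}$ on every shell, where $\rho_{j,k}$ is a $2^{-j}$-modulus on $K_{k+2}$, and use diagonal choices with $\psi_k=\max_{j\le k}$. Then $d_g(x,y)<2^{-j}$ forces both the Euclidean distance and the shell confinement needed to give $|f(x)-f(y)|<C2^{-j}$, uniformly in $x$.

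Finally, all these estimates are per curve, hence per component. They are therefore independent of the component, which gives a common modulus.
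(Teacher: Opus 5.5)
Your geometric inputs are fine. Müller--Nardmann with a prescribed lower bound on the conformal factor gives completeness and bounded geometry. The paper instead builds the growth into the initial metric $h=a^2\sum_i dx_i^2$ and takes the convexity radius from their Remark~1.7, but your comparison argument also works. Confining $g$-short curves by the cost of crossing a shell is a reasonable substitute for the paper's $1$-Lipschitz scale function $\tau$ and its Gronwall bound $|\gamma(s)-x|\le(e^s-1)\tau(x)$.

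The gap is in the uniformity step, which you rightly call the crux but do not close. Your diagonal fix imposes $e^{\phi}\ge\max_{j\le k}2^j/\rho_{j,k}$ on shell $k$, so on every shell with $k<j$ nothing enforces the $2^{-j}$-modulus. There $e^{\phi}$ is bounded independently of $j$, and $d_g(x,y)<2^{-j}$ yields only $|x-y|\lesssim2^{-j}$. For a merely continuous $f$, for instance $f(x)=|x_1-c|^{1/2}$ near an interior point, the asserted bound $|f(x)-f(y)|<C2^{-j}$ is then false. No finite weight on a fixed shell can enforce all $j$ simultaneously. Your first estimate also contains a term that need not be small: $\omega_{K_{k+2}}(2^{-k})$ can stay of order one if $f$ oscillates rapidly near $\partial\Omega$. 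What actually controls the oscillation is that the Euclidean displacement is below some $\rho$, not below $2^{-k}$.

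No second weighting is needed. Your first step, done carefully, already handles points near $\partial\Omega$, and compactness handles the rest. Take $\rho_k$ nonincreasing, with $\rho_k\le\delta_k$ and $\rho_k$ a $2^{-k}$-modulus of $f$ on $K_{k+3}$, and require also $\phi\ge0$. Crossing shell $i$ then costs $g$-length at least $\delta_i\cdot2^i/\rho_i\ge2^i\ge1$. Hence, for $k\ge2$, a curve of $g$-length $<1$ from $x\in K_{k+1}\setminus\operatorname{int}K_k$ stays in three adjacent shells and has Euclidean length $<2^{1-k}\rho_{k-1}\le\rho_{k-1}$. This gives $|f(x)-f(y)|<2^{1-k}$.

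Now fix $\varepsilon>0$ and choose $m$ with $2^{1-m}<\varepsilon$. Points outside $\operatorname{int}K_m$ are handled at the fixed scale $d_g<1$. For $x\in K_m$, the bound $|x-y|\le d_g(x,y)$ coming from $\phi\ge0$ shows that $d_g(x,y)<\theta\le\delta_m$ places $y$ in $K_{m+1}$ with $|x-y|<\theta$. Uniform continuity of $f$ on the compact set $K_{m+1}$ then supplies $\theta$.

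This is exactly the split in the paper's proof. There the oscillation bound $\min\{1,d(x)\}$ at scale $b(x)$ plays the role of your $2^{1-k}$, and the compact set $\{d\ge\eta/4\}$ plays the role of $K_{m+1}$.
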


\begin{proof}
Put $d(x)=\dist(x,\partial\Omega)$. By continuity, we may choose a
continuous positive function $b$ on $\Omega$ such that
\begin{equation}\label{eq7}
 b(x)\le\min\{1,d(x)/4\},\qquad
 |f(y)-f(x)|\le\min\{1,d(x)\}\quad\text{if }|y-x|<b(x).
\end{equation}
For completeness, the allowable radii have a positive lower bound
on a neighborhood of each point: first make the oscillation of $f$
smaller than half of the positive local lower bound for
$\min\{1,d\}$, and then shrink the neighborhood. A partition of unity
gives a continuous positive minorant of these allowable radii.

Define
\[
 \tau(x)=\inf_{y\in\Omega}\{b(y)+|x-y|\}.
\]
Then $0<\tau\le b$ and $\tau$ is $1$-Lipschitz. Positivity follows
by splitting the infimum into a small neighborhood of $x$, on which
$b$ is bounded below, and its complement. Choose a smooth positive
function $a\ge\tau^{-1}$ and the preliminary metric
$h=a^2\sum_i dx_i^2$.

Apply M\"uller and Nardmann~\cite[Theorem~1.4]{MN2015} with
initial metric $g_0=h$, a smooth compact exhaustion, positive constant
curvature bounds, a constant radius bound $\iota>0$, and prescribed
lower bound $u_0\equiv1$ for the logarithmic conformal factor.
The theorem gives $g=e^{2\sigma}h$ with $\sigma>1$ and $g$ complete;
\cite[Fact~1.3]{MN2015} converts its bounds outside the exhaustion
sets into bounded geometry on all of $\Omega$.
By~\cite[Remark~1.7]{MN2015}, the same theorem holds with convexity
radius in place of injectivity radius; thus we may also require
$\operatorname{conv}_g\ge\iota$.
In particular $g\ge h$, and hence
\begin{equation*}
 |\xi|_g\ge |\xi|/\tau(x),\qquad |\xi|_g\ge|\xi|.
\end{equation*}

If $\gamma$ is parametrized by $g$-arc length and starts at $x$, the
Lipschitz property of $\tau$ gives, almost everywhere,
\[
 |\gamma'|\le\tau(\gamma),\qquad
 \tau(\gamma(s))\le e^s\tau(x),\qquad
 |\gamma(s)-x|\le(e^s-1)\tau(x).
\]
Thus $d_g(x,y)<\log2$ implies $|y-x|<b(x)$ and the oscillation bound
in~\eqref{eq7} applies. Given $\eta>0$, points with $d(x)<\eta/2$
therefore satisfy $|f(y)-f(x)|<\eta$ at this fixed small metric scale.
For the remaining points, take also $d_g(x,y)<\eta/4$. Both points
then lie in the compact subset $\{d\ge\eta/4\}\subset\Omega$.
Uniform Euclidean continuity on that set, together with
$|x-y|\le d_g(x,y)$, supplies a smaller scale at which the same
oscillation bound holds. This proves the assertion. All constants
were chosen for the whole of $\Omega$.
\end{proof}

\begin{remark}\label{rem2}
The maximum principle is topological and is unchanged by the choice
of $g$. Local $C^{1,1}$ regularity for a smooth Riemannian metric is
also the usual local $C^{1,1}$ regularity in Euclidean coordinates.
There is no claim that the resulting obstacles have bounded global
Euclidean derivatives.
\end{remark}

\section{Regularization preserving one maximum principle}\label{sec4}

Throughout this section $M$ has no compact connected components and
is a complete finite-dimensional
Riemannian manifold with the uniform local geometry just described.
All envelopes are taken in the connected component of their argument.
For a function $f$ define
\begin{equation*}
 S_\lambda f(x)=\sup_{y\in M}
       \left(f(y)-\frac{d_g(x,y)^2}{2\lambda}\right),\qquad
 I_\mu f(x)=\inf_{y\in M}
       \left(f(y)+\frac{d_g(x,y)^2}{2\mu}\right).
\end{equation*}
These are the sup- and inf-convolutions associated with squared
distance~\cite{AF2015,LL1986}.

A function $F$ is uniformly locally $C$-semiconvex if there is a
fixed radius on whose geodesically convex balls
$(F\circ\gamma)''\ge-C$ in distributions for every unit-speed
geodesic $\gamma$. Its subdifferential $\partial F(y)\subset T_yM$
is the set of vectors $\zeta$ such that
\[
 F(\exp_y v)\ge F(y)+\langle\zeta,v\rangle+o(|v|)
 \quad(v\to0).
\]
For finite locally semiconvex functions it is nonempty, compact and
convex, and is upper semicontinuous in local tangent-bundle
coordinates. If $F$ is $\Lambda$-Lipschitz, all its subgradients have norm
at most $\Lambda$. These facts reduce in smooth charts to the corresponding
facts for convex functions; see~\cite{AF2015,CS2004}.

\begin{lemma}\label{lem4}
Let $f:M\to\R$ be uniformly continuous, with one modulus on all
components. For all sufficiently small $\lambda>0$, $F=S_\lambda f$
is finite, Lipschitz and uniformly locally semiconvex, with constants
which may depend on $\lambda$. Moreover,
\[
 0\le S_\lambda f-f\le e_\lambda,\qquad e_\lambda\longrightarrow0.
\]
If $f$ satisfies the maximum principle, so does $S_\lambda f$.
\end{lemma}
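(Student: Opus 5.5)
The proof has three parts: finiteness and the error bound, the Lipschitz and semiconvexity properties, and preservation of the maximum principle.

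\textbf{Finiteness and the error bound.} Let $\omega$ be the common modulus of continuity of $f$. Fix a scale $s_0>0$, for instance the convexity scale $r$. By chaining along minimizing geodesics, which exist by completeness, uniform continuity gives linear growth: $f(y)\le f(x)+\omega(s_0)\,(1+d_g(x,y)/s_0)$. The quadratic penalty dominates this, so the supremum defining $S_\lambda f(x)$ is finite. Moreover, for small $\lambda$, any near-maximizer $y$ satisfies
\[
d_g(x,y)^2/(2\lambda)\le f(y)-f(x)\le C(1+d_g(x,y)).
\]
Hence $d_g(x,y)\le \rho_\lambda:=C'\sqrt\lambda$, and this bound is uniform in $x$. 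Taking $y=x$ gives $S_\lambda f\ge f$. Restricting to near-maximizers gives
\[
S_\lambda f(x)-f(x)\le \sup\{f(y)-f(x):d_g(x,y)\le\rho_\lambda\}\le\omega(\rho_\lambda)=:e_\lambda\to0.
\]

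\textbf{Lipschitz and semiconvexity.} Choose $\lambda$ so small that $\rho_\lambda<r$. Then only $y$ with $d_g(x,y)<r$ matter, and locally
\[
S_\lambda f=\sup_{y}\bigl(f(y)-c(x,y)/\lambda\bigr)
\]
over a family of functions that are smooth in $x$. By \eqref{eq6}, $\nabla_x^2 c\le 2g$, so each function $x\mapsto -c(x,y)/\lambda$ is $(2/\lambda)$-semiconvex along geodesics. A supremum of such functions is again $(2/\lambda)$-semiconvex. In the same way, the first derivatives $|\nabla_x c|=d_g(x,y)\le\rho_\lambda$ give a uniform Lipschitz bound $\rho_\lambda/\lambda$. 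This is standard (compare \cite{AF2015}), and the constants depend only on $\lambda$ and the bounded geometry.

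\textbf{The maximum principle.} This is the main step, and I would use Lemma~\ref{lem1}. Suppose a component $C$ of $\{S_\lambda f>t\}$ is relatively compact. For each $x\in C$ pick a maximizer $y(x)$. A maximizer exists because $f$ is continuous on the compact ball $\overline B(x,\rho_\lambda)$, by Hopf--Rinow. Then $f(y(x))\ge S_\lambda f(x)>t$. I claim the whole closed geodesic segment from $x$ to $y(x)$ lies in $\{S_\lambda f>t\}$. Indeed, for $z$ on the segment, $d_g(z,y)\le d_g(x,y)$, so
\[
S_\lambda f(z)\ge f(y)-d_g(z,y)^2/(2\lambda)\ge S_\lambda f(x)>t.
\]
The segment is connected, so it lies in $C$, and in particular $y(x)\in C$. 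Now let $D$ be the component of $\{f>t\}$ containing $y(x)$. Since $S_\lambda f\ge f$, we have $\{f>t\}\subset\{S_\lambda f>t\}$, so the connected set $D$ lies in $C$. Thus $D$ is relatively compact, contradicting Lemma~\ref{lem1} applied to $f$.

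The only delicate points are the existence of maximizers, which uses completeness and the uniform radius, and the fact that $M$ has no compact components, which Lemma~\ref{lem1} requires. Neither needs $C$ to contain a plateau, so maximal plateaus cause no difficulty. I expect the uniform localization of near-maximizers, which needs the uniform modulus across components, to be the main technical obstacle. The topological step itself is short.
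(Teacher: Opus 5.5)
Your proposal is correct and follows essentially the paper's proof: linear growth of $f$ by chaining along minimizing geodesics, localization of the relevant points $y$ within distance $O(\sqrt{\lambda})$ of $x$, Lipschitz and $(2/\lambda)$-semiconvexity bounds from the squared-distance estimates \eqref{eq6}, and the maximum principle via Lemma~\ref{lem1}, by joining each point of $\{S_\lambda f>t\}$ to some $y$ with $f(y)>t$ and absorbing the component of $\{f>t\}$ through $y$. Your maximizer-plus-segment argument is the same as the paper's description of $\{S_\lambda f>t\}$ as the union of the balls $B_g(y,\sqrt{2\lambda(f(y)-t)})$ over $f(y)>t$, and the remaining differences are cosmetic, such as taking $e_\lambda=\omega(\rho_\lambda)$ instead of $\sup_s(\omega(s)-s^2/(2\lambda))$.
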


\begin{proof}
Let $\omega(s)$ be the supremum of $|f(x)-f(y)|$ over pairs in the
same component with $d_g(x,y)\le s$. Subdivision of minimizing
geodesics, using uniform continuity at one fixed scale, shows that
\begin{equation}\label{eq8}
 \omega(s)\le A s+B\quad(s\ge0),\qquad \omega(s)\longrightarrow0
 \quad(s\downarrow0),
\end{equation}
for constants $A,B$. Quadratic penalization is therefore coercive,
so the supremum defining $S_\lambda f(x)$ is attained. An optimizer
$y$ satisfies
\[
 \frac{d_g(x,y)^2}{2\lambda}\le f(y)-f(x)
 \le A d_g(x,y)+B.
\]
Hence all optimizers lie at distance at most
$R_\lambda=A\lambda+\sqrt{A^2\lambda^2+2B\lambda}$ from $x$,
and $R_\lambda\to0$. The bound
\[
 0\le S_\lambda f(x)-f(x)
 \le\sup_{s\ge0}\left(\omega(s)-\frac{s^2}{2\lambda}\right)
 =:e_\lambda
\]
and~\eqref{eq8} give $e_\lambda\to0$ by first restricting to small
$s$ and then using the quadratic penalty on the complement.

Fix a uniform geometric radius $r>0$ and take $R_\lambda<r$.
On $B_g(x_0,r)$ the supremum may be restricted to
$y\in\overline{B_g(x_0,2r)}$. For these $x,y$, the functions
$x\mapsto f(y)-c(x,y)/\lambda$ have uniformly bounded first
derivatives and Hessians bounded below by $-2g/\lambda$, after
shrinking $r$ if necessary. Their supremum is locally Lipschitz
with a uniform constant and uniformly locally semiconvex. Integration
along geodesics makes the Lipschitz bound global within each
component.

For the maximum principle, observe that
\begin{equation*}
 \{S_\lambda f>t\}
 =\bigcup_{\{y:f(y)>t\}}
 B_g\!\left(y,\sqrt{2\lambda(f(y)-t)}\right).
\end{equation*}
Every point of this union is joined, inside the indicated ball, to
its center $y$. Its component in the union therefore contains $y$
and the entire component of $\{f>t\}$ containing $y$, since
$S_\lambda f\ge f$. The latter component is not relatively compact
by Lemma~\ref{lem1}. Consequently no component of the union is
relatively compact. Another application of Lemma~\ref{lem1}
finishes the proof.
\end{proof}

\begin{lemma}\label{lem5}
Suppose that $F:M\to\R$ is $\Lambda$-Lipschitz and uniformly locally
$C$-semiconvex. For all sufficiently small $\mu>0$, the following
properties hold.
\begin{enumerate}[label=\textup{(\roman*)}]
\item Each infimum $G(x)=I_\mu F(x)$ has a unique minimizer $P(x)$,
and $d_g(x,P(x))\le2\mu \Lambda$.
\item $G\in C^{1,1}_{\loc}(M)$ and
\begin{equation}\label{eq9}
 \nabla G(x)=-\mu^{-1}\exp_x^{-1}P(x).
\end{equation}
\item For every $\zeta\in\partial F(y)$, writing $z=\exp_y(\mu \zeta)$,
one has
\begin{equation}\label{eq10}
 P(z)=y,\qquad G(z)=F(y)+\tfrac\mu2|\zeta|^2.
\end{equation}
\item $0\le F-G\le\mu\Lambda^2/2$ on $M$.
\end{enumerate}
\end{lemma}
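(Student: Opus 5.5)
We follow the Lasry--Lions argument on a manifold, as in Azagra--Ferrera~\cite{AF2015}, working throughout inside balls of radius $8r$ where \eqref{eq6} holds and $c$ is smooth. The key point is that for $\mu$ small, $\mu C\ll1$, the penalized function $y\mapsto F(y)+c(x,y)/\mu$ is strongly convex near $x$. We prove (iv) and the localization in (i) first, then uniqueness, then \eqref{eq10}, and finally the $C^{1,1}$ regularity with \eqref{eq9}.

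\emph{Localization and (iv).} Taking $y=x$ gives $G\le F$. If $y$ nearly minimizes, then
\[
F(y)+\frac{d_g(x,y)^2}{2\mu}\le F(x)\le F(y)+\Lambda d_g(x,y),
\]
so $d_g(x,y)\le2\mu\Lambda$; this also handles $y$ in other components, since the infimum is taken componentwise. Completeness (Hopf--Rinow) gives existence of a minimizer. For (iv), $F(x)-G(x)\le\sup_s(\Lambda s-s^2/2\mu)=\mu\Lambda^2/2$.

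\emph{Uniqueness.} For $2\mu\Lambda<r$, all minimizers lie in the strongly convex ball $B_g(x,r)$. Along a unit-speed geodesic there, \eqref{eq6} gives
\[
\bigl(F+c(x,\cdot)/\mu\bigr)''\ge -C+\frac{1}{2\mu}>0
\]
in distributions once $\mu<1/(2C)$. This function is strictly convex on the ball, so its minimizer $P(x)$ is unique.

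\emph{(iii).} Take $\zeta\in\partial F(y)$ and $z=\exp_y(\mu\zeta)$, so $\nabla_y c(z,y)=-\mu\zeta$. The function $\phi(w)=F(w)+c(z,w)/\mu$ on the convex ball $B_g(y,r)$ satisfies $0\in\partial\phi(y)$. We plan to check this by adding the semiconvex subdifferential inequality and the smooth expansion of $c(z,\cdot)$. Since $\phi$ is strictly (semi)convex with a uniform positive lower bound on its second derivative, a critical point in the subdifferential sense is the global minimizer on the ball. Localization then excludes minimizers outside, which requires $|\mu\zeta|\le\mu\Lambda$ to be small. Hence $P(z)=y$ and $G(z)=F(y)+\mu|\zeta|^2/2$.

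\emph{(ii).} This is the main obstacle. We expect to combine two one-sided bounds.
\begin{itemize}[leftmargin=2em]
\item $G$ is locally semiconcave with constant $O(1/\mu)$, being an infimum of the functions $x\mapsto F(y)+c(x,y)/\mu$, whose Hessians are uniformly bounded by $2/\mu$ via \eqref{eq6} and the bounded mixed derivatives.
\item $G$ is locally semiconvex: $P$ is Lipschitz, and $G$ is touched from below at each point by a smooth function.
\end{itemize}
To get these, first try the standard route. By the optimality condition, $-\nabla_y c(x,P(x))/\mu\in\partial F(P(x))$, so (iii) shows that $x\mapsto P(x)$ inverts the map
\[
\Phi:(y,\zeta)\mapsto\exp_y(\mu\zeta)
\]
restricted to the graph of $\partial F$. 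The monotonicity of $\partial F+C\,\mathrm{id}$ in charts, together with the near-isometry of $\exp$ at small scale, gives a bound $d_g(P(x),P(x'))\le K\,d_g(x,x')$. Then $G(x')\ge F(P(x'))+c(x,P(x'))/\mu-O(d_g(x,x')^2)$ follows from the expansion of $c$ in its first variable with bounded second derivatives. The two-sided quadratic bounds $|G(x')-G(x)-\langle v,\exp_x^{-1}x'\rangle|\le K'd_g(x,x')^2$ with $v=\nabla_xc(x,P(x))/\mu=-\mu^{-1}\exp_x^{-1}P(x)$ then yield differentiability, formula \eqref{eq9}, and Lipschitz continuity of $\nabla G$ via the Lipschitz bound on $P$. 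In charts this is exactly local $C^{1,1}$. The delicate point is making all constants uniform using the bounded geometry and the bounded mixed derivatives of $c$. If the monotonicity argument for the Lipschitz bound on $P$ proves awkward, we will fall back on the semiconcave-plus-semiconvex characterization of $C^{1,1}$~\cite{CS2004}.
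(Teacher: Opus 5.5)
Your proposal is correct. Localization, uniqueness, (iii) and (iv) match the paper: compare with $y=x$ to localize, use strong geodesic convexity of $H_x=F+c(x,\cdot)/\mu$ once $\mu C$ is small for uniqueness, combine $0\in\partial\bigl(F+c(z,\cdot)/\mu\bigr)(y)$ with convexity and localization for (iii), and use the Lipschitz bound for (iv).

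The genuine difference is in (ii): how you get the Lipschitz bound on $P$.

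\textbf{Your route.} Optimality gives $\mu^{-1}\exp_{P(x)}^{-1}x\in\partial F(P(x))$, so $P$ inverts $(y,\zeta)\mapsto\exp_y(\mu\zeta)$ on the graph of $\partial F$. Monotonicity of $\partial F+C\,\mathrm{id}$ is then meant to make this inverse Lipschitz. This is the standard Hilbert-space proof that the proximal map $(I+\mu\partial F)^{-1}$ is Lipschitz.

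\textbf{The paper's route.} It avoids subgradients entirely. It adds the strong-convexity inequalities of $H_x$ at $P(x)$ and of $H_z$ at $P(z)$; the $F$-terms cancel, leaving
\[
 a_\mu\,d_g(P(x),P(z))^2
 \le\mu^{-1}\bigl\{c(x,P(z))-c(x,P(x))+c(z,P(x))-c(z,P(z))\bigr\}
 \le\mu^{-1}C_0\,d_g(x,z)\,d_g(P(x),P(z)).
\]
This uses only the mixed-derivative bound on $c$. The paper also gets differentiability and \eqref{eq9} from mere continuity of $P$, by the first-order sandwich between $c(x+h,P(x+h))-c(x,P(x+h))$ and $c(x+h,P(x))-c(x,P(x))$. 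It uses the Lipschitz bound only for $C^{1,1}$.

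\textbf{What your route costs on a manifold.} It is more work than you indicate.
\begin{itemize}
\item In a chart, $\exp_y(\mu\zeta)-\exp_{y'}(\mu\zeta')$ differs from $(y-y')+\mu(\zeta-\zeta')$ by an error containing a term of order $\mu^2\Lambda|\zeta-\zeta'|$. Since $\partial F$ is not Lipschitz, pairing with $y-y'$ cannot absorb this term.
\item Chart monotonicity is a statement about covectors, so the pairing must be taken in the $g(y)$-inner product.
\item You would have to expand the full square $|(y-y')+\mu(\zeta-\zeta')|_{g(y)}^2$, keep the $\mu^2|\zeta-\zeta'|^2$ term, and absorb the error into it.
\end{itemize}
This works, but it is exactly the step the paper's estimate makes unnecessary. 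Your stated fallback (semiconcave plus semiconvex) does not bypass it, because you derived semiconvexity of $G$ from Lipschitz continuity of $P$. The paper's estimate is the natural fallback.

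\textbf{One slip.} The displayed bound $G(x')\ge F(P(x'))+c(x,P(x'))/\mu-O(d_g(x,x')^2)$ is false as written: it omits the first-order term (test $F(y)=\langle a,y\rangle$ on $\R^n$ with $x'=x-ta$). What you need is
\[
 G(x')-G(x)\ge\mu^{-1}\{c(x',P(x'))-c(x,P(x'))\}.
\]
Then expand in the first variable, and use the mixed-derivative bound to replace $\nabla_xc(x,P(x'))$ by $\nabla_xc(x,P(x))$ at cost $O(d_g(x,x')^2)$.
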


\begin{proof}
The assertion is immediate for $\Lambda=0$, component by component, so
assume $\Lambda>0$. Choose a uniform radius $r>0$ so that $F$ is
$C$-semiconvex on all relevant balls of radius $2r$, those balls
are strongly convex, and~\eqref{eq6} and the mixed-derivative
bounds hold there. Choose
\begin{equation*}
 2\mu \Lambda<r/2,\qquad \mu C<1/4.
\end{equation*}
The function
$H_x(y)=F(y)+c(x,y)/\mu$ is coercive, because $F$ is Lipschitz;
thus it attains its minimum. Comparison with $y=x$ gives
$d_g(x,P(x))\le2\mu \Lambda$ for any minimizer. On $B_g(x,2r)$,
$H_x$ is strongly geodesically convex, with convexity constant
\[
 a_\mu=\frac1{2\mu}-C\ge\frac1{4\mu}.
\]
All minimizers lie in this ball, which proves uniqueness.

The minimizer map $P$ is continuous. Indeed, if $x_k\to x$, all
$P(x_k)$ lie in one compact ball. Every convergent subsequence
minimizes $H_x$, and uniqueness identifies its limit as $P(x)$.
On a sufficiently small coordinate neighborhood, the defining infimum
may be restricted to one compact region where $c$ is smooth. The
functions $x\mapsto F(y)+c(x,y)/\mu$ have a common local upper
Hessian bound there, so their infimum $G$ is locally semiconcave.
More explicitly, comparison with $P(x)$ and $P(x+h)$ gives
\[
 \frac{c(x+h,P(x+h))-c(x,P(x+h))}{\mu}
 \le G(x+h)-G(x)
 \le\frac{c(x+h,P(x))-c(x,P(x))}{\mu}.
\]
The continuity of $P$ and a uniform first-order expansion of $c$
show that both bounds equal
$\mu^{-1}D_xc(x,P(x))h+o(|h|)$. Thus $G$ is differentiable,
its derivative is continuous, and~\eqref{eq9} follows.

Here is also the quantitative step giving $C^{1,1}$. Take nearby
$x,z$, and put $y=P(x)$ and $w=P(z)$. Strong convexity and the
minimizing property give
\[
 a_\mu d_g(y,w)^2
 \le\frac1\mu\{c(x,w)-c(x,y)+c(z,y)-c(z,w)\}.
\]
Integrating the mixed derivative of $c$ on the product of the two
short geodesic segments bounds the expression in braces in absolute
value by $C_0d_g(x,z)d_g(y,w)$. Consequently
\[
 d_g(P(x),P(z))\le4C_0d_g(x,z).
\]
The local derivative bounds for squared distance and~\eqref{eq9}
then make $\nabla G$ locally Lipschitz.

For (iii), let $\zeta\in\partial F(y)$ and $z=\exp_y(\mu \zeta)$.
Since $|\zeta|\le \Lambda$, the distance $d_g(z,y)$ is at most $\mu \Lambda$.
Moreover,
\[
 \nabla_y c(z,y)=-\exp_y^{-1}z=-\mu \zeta,
 \qquad 0\in\partial\big(F+c(z,\cdot)/\mu\big)(y).
\]
Strong convexity shows that $y$ minimizes this function on
$B_g(z,2r)$. Every global minimizer is in that ball by (i), so
$P(z)=y$. Substitution proves~\eqref{eq10}. Finally,
\[
 F(y)+\frac{d_g(x,y)^2}{2\mu}
 \ge F(x)-\Lambda d_g(x,y)+\frac{d_g(x,y)^2}{2\mu}
 \ge F(x)-\frac{\mu\Lambda^2}{2},
\]
while $G(x)\le F(x)$, proving (iv).
\end{proof}

\begin{lemma}\label{lem6}
Under the assumptions of Lemma~\ref{lem5}, if $F$ satisfies the
maximum principle, then $I_\mu F$ satisfies the maximum principle
for all sufficiently small $\mu>0$.
\end{lemma}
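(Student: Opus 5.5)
The plan is to verify the criterion of Lemma~\ref{lem1} for $G=I_\mu F$ by contradiction. Take $\mu$ as small as Lemma~\ref{lem5} requires, and suppose that for some $t$ a component $C$ of $\{G>t\}$ is relatively compact. Since $G$ is continuous, $G=t$ on $\partial C$, and $m=\max_{\overline C}G>t$ is attained only inside $C$. Let $Z=\{x\in\overline C:G(x)=m\}$ be the top plateau. It is a nonempty compact subset of the open set $C$, and each point of $Z$ is a local maximum of $G$. The aim is to show that $F$ has a compact, strictly isolated maximal plateau at the same height $m$. That contradicts the maximum principle for $F$.

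\emph{Step 1: the plateau is fixed by $P$ and equal for $F$ and $G$.} At $x\in Z$ we have $\nabla G(x)=0$, since $G\in C^{1,1}_{\loc}$ by Lemma~\ref{lem5}(ii). Then \eqref{eq9} gives $P(x)=x$, so $G(x)=F(x)=m$.

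\emph{Step 2: $F\le m$ near $Z$, with equality only on $Z$.} Take $y$ near $Z$ and $\zeta\in\partial F(y)$, and put $z=\exp_y(\mu\zeta)$. By \eqref{eq10},
\[
 G(z)=F(y)+\tfrac\mu2|\zeta|^2 .
\]
If $z$ lies in $\overline C$, then $G(z)\le m$. This gives $F(y)\le m$, and equality forces both $\zeta=0$ and $z=y\in Z$. So on a neighborhood $N$ of $Z$ we would have $F\le m$ and $\{F=m\}\cap N=Z$.

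\emph{Step 3: the contradiction.} Choose an open $V$ with $Z\subset V\Subset N$. Then $\max_{\partial V}F<m=\max_{\overline V}F$, since $\partial V$ is compact and misses $Z$. This violates the maximum principle for $F$.

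The main obstacle is Step~2. We need the point $z=\exp_y(\mu\zeta)$ to stay inside $\overline C$, or at least inside a region where $G\le m$. We know $d_g(z,y)\le\mu\Lambda$, but $\mu$ must be chosen independently of $C$, whose distance from $Z$ to $\partial C$ may be tiny. First, I would show that the subgradients at plateau points are small. If $x\in Z$ and $0\ne\zeta\in\partial F(x)$, then $z_s=\exp_x(s\mu\zeta)$ satisfies
\[
 G(z_s)\ge F(x)+s\mu|\zeta|^2-O(s^2\mu)>m
\]
for small $s$. This uses the semiconvexity of $F$ along the geodesic together with the lower bound
\[
 G(w)\ge\inf_y\{F(y)+d_g(w,y)^2/2\mu\},
\]
controlled by the strong convexity constant $a_\mu$. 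Since $z_s$ lies arbitrarily close to $x\in C$, this contradicts the maximality of $m$. Hence $\partial F(x)=\{0\}$ on $Z$. Upper semicontinuity of $\partial F$ then makes $|\zeta|$ small for $y$ near $Z$. Consequently $z$ is close to $y$, hence inside $C$, and Step~2 goes through.

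If this local expansion proves delicate, the fallback is a level-shifting argument. Lemma~\ref{lem5}(iv) gives
\[
 \{F>s+\mu\Lambda^2/2\}\subset\{G>s\}.
\]
So whenever $m>t+\mu\Lambda^2/2$, the component of $\{F>t+\mu\Lambda^2/2\}$ through a point of $Z$ lies in $C$. It would then be relatively compact, contradicting Lemma~\ref{lem1} for $F$. Only the near-top levels would remain to be handled by the plateau argument above.
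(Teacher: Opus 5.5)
Your architecture matches the paper's. You take the top plateau $Z$ of $G$, derive $\nabla G=0$, $P(x)=x$ and $F=G=m$ on $Z$, show that $\partial F$ vanishes on $Z$, and use upper semicontinuity to keep $\exp_y(\mu\zeta)$ inside the region. The exact identity \eqref{eq10} then gives $F\le m$ near $Z$ with equality only on $Z$, which contradicts the maximum principle for $F$. Working through a relatively compact component of $\{G>t\}$ and Lemma~\ref{lem1}, rather than an arbitrary $V\Subset M$, makes no real difference.

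The gap is in the step you flag as the main obstacle: the proof that $\partial F(x)=\{0\}$ for $x\in Z$. Your inequality $G(z_s)\ge F(x)+s\mu|\zeta|^2-O(s^2\mu)$ does not follow from the subgradient inequality for $\zeta$, semiconvexity, and the definition of $G$.
\begin{itemize}
\item Even in the convex Euclidean model, the affine minorant $F(y)\ge F(x)+\langle\zeta,y-x\rangle$ yields only $G(z_s)\ge F(x)+(s-\tfrac12)\mu|\zeta|^2$. The term $-\tfrac\mu2|\zeta|^2$ is of order $\mu$, not $s^2\mu$, and this bound lies below $m$ for small $s$.
\item Your ingredients never use that $x$ is a maximum point, so the same derivation would apply to $F(y)=|y|$ on $\R$ at $x=0$ with $\zeta=1$. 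There $G$ is the Huber function and $G(z_s)=\mu s^2/2$, so the claimed bound fails.
\item More fundamentally, $G\in C^{1,1}_{\loc}$ with $\nabla G(x)=0$ rules out any first-order gain in $s$. The true gain is quadratic in $s$, the same order as your uncontrolled $O(s^2\mu)$ error, so an approximate expansion cannot settle the sign.
\end{itemize}

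The missing idea is to combine a zero subgradient with convexity of $\partial F(x)$.
\begin{itemize}
\item Since $P(x)=x$, the point $x$ minimizes $F+c(x,\cdot)/\mu$, and the second term has vanishing gradient at $x$. Hence $0\in\partial F(x)$.
\item If also $0\ne\zeta\in\partial F(x)$, convexity gives $s\zeta\in\partial F(x)$ for $0<s<1$.
\item Applying \eqref{eq10} to the subgradient $s\zeta$ gives exactly $G(\exp_x(s\mu\zeta))=m+\tfrac\mu2s^2|\zeta|^2>m$, with no error term.
\item For small $s$ this point lies in the open set $C\ni x$, contradicting the definition of $m$.
\end{itemize}
This is how the paper obtains \eqref{eq11}, and with this replacement the rest of your argument goes through as written.

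Your fallback level shift cannot replace this step. It leaves untreated the levels with $m-t\le\mu\Lambda^2/2$, and $\mu$ must be fixed before $C$ is chosen.
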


\begin{proof}
Fix $\mu$ for which Lemma~\ref{lem5} holds and write $G=I_\mu F$.
Suppose, to the contrary, that for some $V\Subset M$,
\[
 m:=\max_{\overline V}G>\max_{\partial V}G.
\]
The nonempty set $K=\{x\in\overline V:G(x)=m\}$ is compact and
contained in $V$. At every $x\in K$, $\nabla G(x)=0$, hence
\[
 P(x)=x,\qquad F(x)=G(x)=m,\qquad 0\in\partial F(x).
\]
If $\zeta\ne0$ belonged to $\partial F(x)$, convexity of the
subdifferential would imply $s \zeta\in\partial F(x)$ for
$0<s<1$. Formula~\eqref{eq10} would give
\[
 G(\exp_x(\mu s \zeta))=m+\tfrac\mu2s^2|\zeta|^2>m.
\]
For small $s$ this point lies in $V$, a contradiction. Therefore
\begin{equation}\label{eq11}
 \partial F(x)=\{0\}\qquad(x\in K).
\end{equation}

Upper semicontinuity of the subdifferential, compactness of $K$,
and~\eqref{eq11} give an open neighborhood $U$ of $K$, with
$\overline U\subset V$, small enough that
\[
 \exp_y(\mu \zeta)\in V
 \quad\text{whenever }y\in U,\ \zeta\in\partial F(y).
\]
Using~\eqref{eq10} once more yields
\begin{equation}\label{eq12}
 F(y)+\tfrac\mu2|\zeta|^2
 =G(\exp_y(\mu \zeta))\le m
 \qquad(y\in U,\ \zeta\in\partial F(y)).
\end{equation}
Thus $F\le m$ on $U$. If $F(y)=m$ there, any subgradient in
\eqref{eq12} must be zero. Formula~\eqref{eq10} then gives
$G(y)=m$, so $y\in K$. We have proved
\[
 \{y\in U:F(y)=m\}=K.
\]
Choose an open $W$ with $K\subset W\Subset U$. On the compact
boundary of $W$, $F<m$, whereas $F=m$ on $K$. This contradicts
the maximum principle for $F$.
\end{proof}

\begin{proposition}\label{prop2}
Let $\Omega\subset\R^n$ be bounded and open. If $f\in C(\Omega)$
satisfies the maximum principle, then for every $\eta>0$ there is
$a\in C^{1,1}_{\loc}(\Omega)$ satisfying the maximum principle and
\[
 \sup_\Omega|a-f|<\eta.
\]
The analogous assertion holds with the minimum principle.
\end{proposition}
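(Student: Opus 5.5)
The plan is to combine Lemma~\ref{lem3} with the two envelope lemmas. First apply Lemma~\ref{lem3} to $f$. This gives a complete metric $g$ on $\Omega$ with bounded geometry and positive convexity radius, for which $f$ is uniformly continuous on each component with a common modulus. We regard $M=\Omega$ with this metric. Since $\Omega$ is a bounded open subset of $\R^n$, no component of $\Omega$ is compact, so Definition~\ref{def1}, Lemma~\ref{lem1} and the results of Section~\ref{sec4} all apply. By Remark~\ref{rem2}, the maximum principle does not depend on the metric, and local $C^{1,1}$ regularity for $g$ coincides with Euclidean local $C^{1,1}$ regularity.

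Next, take $F=S_\lambda f$ with $\lambda$ small. By Lemma~\ref{lem4}, $F$ is Lipschitz with some constant $\Lambda=\Lambda_\lambda$, uniformly locally $C_\lambda$-semiconvex, and satisfies the maximum principle. It also satisfies $0\le F-f\le e_\lambda$. Choose $\lambda$ so that $e_\lambda<\eta/2$.

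Then, with $\lambda$ now fixed, put $a=I_\mu F$ for $\mu$ small. By Lemma~\ref{lem5}, $a\in C^{1,1}_{\loc}$ and $0\le F-a\le\mu\Lambda^2/2$. Choose $\mu$ so that $\mu\Lambda^2/2<\eta/2$. By Lemma~\ref{lem6}, $a$ satisfies the maximum principle, possibly after shrinking $\mu$ further. Combining the two estimates gives
\[
 -\eta/2<a-f<\eta/2 .
\]
The order of the choices matters: $\lambda$ is fixed before $\mu$, since $\Lambda$ and $C$ depend on $\lambda$.

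For the minimum principle, apply the maximum-principle statement to $-f$ and take $a=-(\text{approximant of } -f)$. Equivalently, one can use $-S_\lambda(-f)$ followed by $-I_\mu(\cdot)$.

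The only delicate point is checking that the hypotheses of Section~\ref{sec4} are satisfied globally. The geometric radius must be uniform, and the Lipschitz and semiconvexity constants must hold on all components at once. Lemmas~\ref{lem3} and~\ref{lem4} were stated with exactly these uniformities, so I expect this step to amount to bookkeeping rather than a genuine obstacle.
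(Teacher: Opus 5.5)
Your proposal is correct and follows the paper's proof essentially step for step. The paper's argument has the same structure: take the metric from Lemma~\ref{lem3}, fix $\lambda$ first so that $F=S_\lambda f$ keeps the maximum principle with $\|F-f\|_\infty<\eta/2$ (Lemma~\ref{lem4}), then choose $\mu$ via Lemmas~\ref{lem5} and~\ref{lem6} with $\mu\Lambda^2/2<\eta/2$, return to Euclidean $C^{1,1}_{\loc}$, and handle the minimum principle by a sign change.
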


\begin{proof}
Choose the metric from Lemma~\ref{lem3}. Lemma~\ref{lem4} gives
$F=S_\lambda f$ satisfying the maximum principle, with
$\|F-f\|_\infty<\eta/2$, for a sufficiently small fixed $\lambda$.
This $F$ is Lipschitz and uniformly locally semiconvex. Choose $\mu$
as in Lemmas~\ref{lem5} and~\ref{lem6}, and small enough that
$\mu\Lambda^2/2<\eta/2$. Then $a=I_\mu F$ has all the required
properties. Local $C^{1,1}$ regularity is unchanged by returning
to Euclidean coordinates. Applying the result to $-f$ and then
changing sign proves the minimum-principle assertion.
\end{proof}

\begin{remark}\label{rem3}
We do not assume boundedness of $f$. Instead, Lemma~\ref{lem4}
uses uniform continuity and the linear growth bound~\eqref{eq8}
to prove finiteness, uniform localization, and uniform approximation
directly. Lemma~\ref{lem5} treats the subsequent envelope.
Thus the boundedness hypothesis in the global theorem of Azagra
and Ferrera~\cite{AF2015} is not needed for these separate arguments.
\end{remark}

\section{Monotone double-obstacle minimization}\label{sec5}

We record the precise classical regularity input. For the
$p$-Dirichlet integral, $1<p<\infty$, a locally bounded solution
of a double-obstacle variational inequality with
$C^{1,1}_{\loc}$ obstacles belongs to
$C^{1,\alpha}_{\loc}$ for some $\alpha=\alpha(n,p)>0$.
The local estimates depend on the obstacle norms; the exponent may
be chosen only in terms of $n,p$. This is the $p$-Laplacian case
of Lieberman's interior regularity theorem~\cite{L1991}.
Using $C^{1,1}$ obstacles avoids any issue of choosing different
H\"older exponents at different points. Only interior regularity
is used.

\begin{proposition}\label{prop3}
Let $\Omega\subset\R^n$ be bounded and open, $1<p<\infty$, and
$u\in W^{1,p}(\Omega)$. Suppose that
$L,R\in C^{1,1}_{\loc}(\Omega)$ satisfy
\[
 L\le u\le R\quad\text{a.e.},\qquad L<R\quad\text{everywhere}.
\]
Assume that $L$ satisfies the maximum principle and $R$ the minimum
principle. Then the unique minimizer $v$ of $\Ep$ in
\begin{equation}\label{eq13}
 \mathcal K(L,R;u)=
 \{w\in u+W^{1,p}_0(\Omega):L\le w\le R\ \text{a.e.}\}
\end{equation}
has a monotone representative in $C^{1,\alpha}_{\loc}(\Omega)$,
where $\alpha=\alpha(n,p)>0$. In particular,
\[
 v-u\in W^{1,p}_0(\Omega),\qquad \Ep(v)\le\Ep(u).
\]
\end{proposition}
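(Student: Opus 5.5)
Existence and uniqueness come first. The class $\mathcal K(L,R;u)$ contains $u$, so it is nonempty. It is convex, and it is closed in $W^{1,p}$: a.e.\ inequalities persist along an a.e.\ convergent subsequence, and $u+W^{1,p}_0(\Omega)$ is closed. On $W^{1,p}_0$ of a bounded set, Poincar\'e's inequality makes $\Ep$ coercive, so the direct method gives a minimizer. Strict convexity of $\xi\mapsto|\xi|^p$ together with Poincar\'e, exactly as in Proposition~\ref{prop1}, gives uniqueness. Comparing with $w=u$ yields $\Ep(v)\le\Ep(u)$, and $v-u\in W^{1,p}_0(\Omega)$ holds by construction. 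Since $L\le v\le R$ with $L,R$ continuous, $v$ is locally bounded. Lieberman's interior theorem for double obstacles in $C^{1,1}_{\loc}$, recalled above, then gives a representative in $C^{1,\alpha}_{\loc}(\Omega)$ with $\alpha=\alpha(n,p)$. From now on $v$ denotes this continuous representative, and by continuity $L\le v\le R$ everywhere.

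Monotonicity is proved by truncation. By symmetry ($v\mapsto -v$ exchanges the roles of $L$ and $R$) it suffices to prove the maximum principle. Suppose it fails. By Lemma~\ref{lem1} there are $t$ and a connected component $C$ of $\{v>t\}$ with $\overline C$ compact in $\Omega$. By continuity $v=t$ on $\partial C$. Define $w=\min\{v,t\}$ on $C$ and $w=v$ elsewhere. Then $w-v=-(v-t)^+\chi_C$ is a continuous $W^{1,p}$ function with compact support in $\Omega$, so $w\in u+W^{1,p}_0(\Omega)$. Clearly $w\le v\le R$.

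The main point is the lower obstacle, namely showing $L\le t$ on $C$. Suppose instead that $\max_{\overline C}L>t$. Since $L\le v=t$ on $\partial C$, the maximum principle for $L$ on the open set $C\Subset\Omega$ gives $\max_{\overline C}L=\max_{\partial C}L\le t$, a contradiction. Hence $L\le t$ on $C$, so $L\le w$, and therefore $w\in\mathcal K$. Now $\Ep(w)\le\Ep(v)$, because $\nabla w=0$ on $C$ wherever it differs from $\nabla v$. Uniqueness then forces $w=v$ a.e., so $v\le t$ on $C$, contradicting $C\subset\{v>t\}$ and $C\ne\emptyset$.

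The only delicate step is justifying that $w-v\in W^{1,p}_0(\Omega)$ without any regularity of $\partial C$. This works because $(v-t)^+\chi_C$ is continuous and compactly supported in $\Omega$. Its continuity across $\partial C$ holds since $v=t$ there, and $\{v>t\}$ is open with $C$ a component, so $C$ is open and nearby points of $\{v>t\}$ outside $C$ belong to other components. A compactly supported $W^{1,p}$ function lies in $W^{1,p}_0$ by mollification. The hypothesis $L<R$ is not needed for monotonicity; it only matters later for strictness.
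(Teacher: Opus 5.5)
Your argument follows the paper's outline: the direct method and strict convexity for existence and uniqueness, Lieberman's theorem for the $C^{1,\alpha}_{\loc}$ representative, and then a truncation. That truncation stays above $L$ by the maximum principle for $L$, does not increase energy, and therefore equals $v$ by uniqueness. The structural difference is in where you truncate. The paper truncates on an arbitrary $V\Subset\Omega$ at level $\max_{\partial V}v+\delta$. You instead pass through Lemma~\ref{lem1} and truncate a relatively compact component $C$ of $\{v>t\}$ at exactly the level $t$.

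That choice puts all the weight on the step you single out, and your justification of it does not hold as written. Put $g=(v-t)^+$. Continuity and compact support of $g\chi_C$ do not imply weak differentiability. The boundary $\partial C$ need not be regular, and other components of $\{v>t\}$ may accumulate on it. So $g\chi_C$ need not agree with $g$ on any neighborhood of $\overline C$, and its membership in $W^{1,p}$ is precisely what has to be proved.

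The step is easily repaired in either of two ways.

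First, since $v\in C^1$ at this stage, $g$ is $\Lambda$-Lipschitz on small balls. Take $x\in C$ and $y\notin C$ in such a ball, and let $z$ be the first point of the segment from $x$ to $y$ that is not in $C$. Then $z\in\partial C$, where $v=t$, so
$|g\chi_C(x)-g\chi_C(y)|=g(x)-g(z)\le\Lambda|x-y|$.
Hence $g\chi_C$ is locally Lipschitz with compact support, and so lies in $W^{1,p}_0(\Omega)$. Its gradient vanishes a.e. on its zero set, which contains $\partial C$, and this is what actually justifies $\Ep(w)\le\Ep(v)$.

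Second, you can adopt the paper's device and truncate at $t+\delta$ instead of $t$. The set $\{x\in\overline C: v(x)\ge t+\delta\}$ is then a compact subset of $C$. So $(v-t-\delta)^+\chi_C=(v-t-\delta)^+\varphi$ for any $\varphi\in C_c^\infty(C)$ equal to $1$ on that set, and membership in $W^{1,p}_0$ is immediate. Letting $\delta\downarrow0$ gives $v\le t$ on $C$. The $\delta$ in the paper's proof is there exactly to bypass the boundary-regularity issue you flagged.
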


\begin{proof}
The class in~\eqref{eq13} is nonempty, since it contains $u$.
It is convex and norm closed, hence weakly closed. Poincar\'e's
inequality on $W^{1,p}_0(\Omega)$ makes an energy-bounded sequence
in $u+W^{1,p}_0(\Omega)$ bounded in $W^{1,p}(\Omega)$.
Reflexivity and lower semicontinuity give a minimizer.
Strict convexity of $|\xi|^p$ implies equality of the gradients of
any two minimizers, and Poincar\'e's inequality then gives equality
of the functions.

Local variations show that $v$ solves the double-obstacle
variational inequality on every relatively compact ball. The
obstacles are bounded and $C^{1,1}$ on such balls, and $v$ is
bounded between them. Lieberman's theorem~\cite{L1991} therefore
gives a representative in $C^{1,\alpha}_{\loc}(\Omega)$.
The a.e. inequalities $L\le v\le R$ hold everywhere for this
representative. No global Sobolev norm of the obstacles is needed
for this local application.

It remains to prove monotonicity. Fix $V\Subset\Omega$ and put
$m=\max_{\partial V}v$. Since $L\le v\le m$ on $\partial V$,
the maximum principle for $L$ implies $L\le m$ on $V$.
For $\delta>0$, define
\[
 w(x)=
 \begin{cases}
  \min\{v(x),m+\delta\},&x\in V,\\
  v(x),&x\notin V.
 \end{cases}
\]
Continuity of $v$ implies that
$(v-m-\delta)_+|_V$, extended by zero, has compact support in $V$
and belongs to $W^{1,p}_0(\Omega)$. Thus $w$ has the prescribed
Sobolev boundary values. Moreover $L\le w\le R$: the only new
value introduced is $m+\delta\ge L$, and the truncation only
lowers $v$. Finally $\Ep(w)\le\Ep(v)$. Uniqueness forces $w=v$,
and letting $\delta\downarrow0$ proves the maximum principle
for $v$. The same argument truncating from below, using the
minimum principle for $R$, proves the minimum principle.
\end{proof}

\begin{proof}[Proof of Theorem~\ref{thm1}]
Choose $\varepsilon_j\downarrow0$. Apply Proposition~\ref{prop2}
to $u$ with its maximum principle, and separately with its minimum
principle, obtaining $a_j,b_j\in C^{1,1}_{\loc}(\Omega)$ such that
\[
 \|a_j-u\|_\infty<\varepsilon_j/4,\qquad
 \|b_j-u\|_\infty<\varepsilon_j/4.
\]
Here $a_j$ satisfies the maximum principle and $b_j$ the minimum
principle. Put
\begin{equation*}
 L_j=a_j-\varepsilon_j/2,\qquad R_j=b_j+\varepsilon_j/2.
\end{equation*}
Addition of constants preserves the respective principles, and
\begin{equation}\label{eq14}
 u-\tfrac34\varepsilon_j<L_j<u-\tfrac14\varepsilon_j
 <u+\tfrac14\varepsilon_j<R_j<u+\tfrac34\varepsilon_j.
\end{equation}
Let $u_j$ be the minimizer from Proposition~\ref{prop3} for these
obstacles and boundary data $u$. That proposition proves
monotonicity and (A), (D), (E). The bounds~\eqref{eq14} give (B)
and, because $|\Omega|<\infty$, convergence in $L^p(\Omega)$.

The gradients are bounded in $L^p$ by (D). Every weakly convergent
subsequence has limit $\nabla u$, as follows from distributional
integration by parts and $u_j\to u$ in $L^p$. Hence the entire
gradient sequence converges weakly to $\nabla u$. Lower
semicontinuity and (D) now yield
\[
 \|\nabla u\|_{L^p}
 \le\liminf_j\|\nabla u_j\|_{L^p}
 \le\limsup_j\|\nabla u_j\|_{L^p}
 \le\|\nabla u\|_{L^p}.
\]
Uniform convexity of $L^p$, $1<p<\infty$, implies strong
convergence of the gradients. Together with convergence of the
functions this proves (C).
\end{proof}

\subsection{Strict decrease away from energy minimizers}

The strict separation in \eqref{eq14} gives a useful additional
conclusion. It will connect the arbitrary-dimensional construction
to the planar smooth approximation theorem.

\begin{corollary}\label{cor1}
For the sequence constructed in the proof of Theorem~\ref{thm1},
if $u$ is not $p$-harmonic in $\Omega$, then
\[
 \Ep(u_j;\Omega)<\Ep(u;\Omega)\qquad\text{for every }j.
\]
If $u$ is $p$-harmonic, every approximant with the boundary
and energy properties of that theorem equals $u$.
\end{corollary}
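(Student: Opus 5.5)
The second assertion is immediate from Proposition~\ref{prop1}. If $u$ is $p$-harmonic and $u_j$ satisfies (D) and (E), then $u_j-u\in W^{1,p}_0(\Omega)$ and $\Ep(u_j)\le\Ep(u)$. The proposition forces equality, and hence $u_j=u$ almost everywhere. Both functions are continuous, so they agree everywhere.

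For the first assertion I argue by contradiction, using uniqueness of the obstacle minimizer together with the strict separation in \eqref{eq14}. Fix $j$ and suppose $\Ep(u_j)=\Ep(u)$. The function $u$ lies in $\mathcal K(L_j,R_j;u)$, and $u_j$ is the unique minimizer of $\Ep$ in that class. So $u$ is also a minimizer, and uniqueness from Proposition~\ref{prop3} gives $u=u_j$ almost everywhere. As both are continuous, $u=u_j$ everywhere.

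Next I show that $u$ solves the unconstrained equation. By \eqref{eq14}, $L_j+\varepsilon_j/4<u<R_j-\varepsilon_j/4$ pointwise in $\Omega$. Take any $\varphi\in C_c^\infty(\Omega)$ and $|s|<\varepsilon_j/(4\|\varphi\|_\infty)$, or any $s$ if $\varphi=0$. Then $u+s\varphi\in\mathcal K(L_j,R_j;u)$: the boundary values are unchanged because $\varphi$ has compact support, and the obstacle inequalities hold with margin. Hence $s\mapsto\Ep(u+s\varphi)$ has a minimum at $s=0$ on an open interval around $0$. This map is differentiable, by the standard bound $\bigl||\xi+s\eta|^p-|\xi|^p\bigr|\le C|s|(|\xi|+|\eta|)^{p-1}|\eta|$ and dominated convergence. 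Its derivative at $0$, namely $p\int_\Omega|\nabla u|^{p-2}\nabla u\cdot\nabla\varphi\dd x$, therefore vanishes. This is \eqref{eq4}, and together with continuity and $u\in W^{1,p}$ it says that $u$ is $p$-harmonic in the sense of Definition~\ref{def2}. That contradicts the hypothesis, so $\Ep(u_j)<\Ep(u)$ for every $j$.

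I do not expect a real obstacle here. The only step needing care is that the perturbation $u+s\varphi$ stays between the obstacles. For this the strict pointwise separation in \eqref{eq14} matters, rather than mere $L_j\le u\le R_j$. Since $\varphi$ is bounded with compact support, the constant gap $\varepsilon_j/4$ suffices without any compactness argument.
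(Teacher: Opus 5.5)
Your proposal is correct and follows essentially the same route as the paper: equality of energies makes $u$ a minimizer in $\mathcal K(L_j,R_j;u)$, the uniform gap in \eqref{eq14} permits the two-sided variations $u+s\varphi$, and the vanishing first variation makes $u$ $p$-harmonic, while the second assertion is Proposition~\ref{prop1}. The uniqueness step $u=u_j$ is harmless but not needed, since $\Ep(u)=\min_{\mathcal K(L_j,R_j;u)}\Ep$ already makes $u$ a minimizer.
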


\begin{proof}
Suppose first that $\Ep(u_j;\Omega)=\Ep(u;\Omega)$ for one $j$.
Both $u$ and $u_j$ then minimize the energy over
$\mathcal K(L_j,R_j;u)$. Uniqueness in
Proposition~\ref{prop3} gives $u_j=u$ almost everywhere.
For any $\varphi\in C_c^\infty(\Omega)$, the uniform positive
gap between $u$ and the obstacles in \eqref{eq14} makes
$u+t\varphi$ admissible for all sufficiently small positive
and negative $t$. Its first variation at zero must therefore
vanish:
\[
 \int_\Omega |\nabla u|^{p-2}\nabla u\cdot\nabla\varphi\dd x=0.
\]
Since $\varphi$ was arbitrary, $u$ is $p$-harmonic.
This proves the strict assertion by contraposition. The last
assertion is Proposition~\ref{prop1}.
\end{proof}

\subsection{Why the obstacles must first be regularized}\label{subsec1}

It is tempting to omit the regularization in Section~\ref{sec4} and simply minimize between
$u-\varepsilon$ and $u+\varepsilon$. This construction does
preserve monotonicity and the energy bound, but it need not produce
even $C^1$ functions. The following two propositions make the
distinction explicit.

\begin{proposition}\label{prop4}
Let $u$ satisfy the assumptions of Theorem~\ref{thm1}. The minimizer
$T_\varepsilon u$ of $\Ep$ over
\[
 \{v\in u+W^{1,p}_0(\Omega):u-\varepsilon\le v\le u+\varepsilon\}
\]
has a continuous monotone representative. As $\varepsilon\downarrow0$
these minimizers converge to $u$ uniformly and strongly in $W^{1,p}$,
with unchanged Sobolev boundary values and no increase of energy.
\end{proposition}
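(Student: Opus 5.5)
The plan is to repeat the argument of Proposition~\ref{prop3} and the proof of Theorem~\ref{thm1}, with the obstacles $L=u-\varepsilon$ and $R=u+\varepsilon$ in place of regularized ones. The class is nonempty, since it contains $u$. It is convex and weakly closed in $u+W^{1,p}_0(\Omega)$. Poincar\'e's inequality, reflexivity and strict convexity of $|\xi|^p$ then give a unique minimizer $v=T_\varepsilon u$, exactly as before. Testing against $u$ gives $\Ep(v)\le\Ep(u)$ and $v-u\in W^{1,p}_0(\Omega)$. The obstacles are continuous, and $u-\varepsilon$ (resp.\ $u+\varepsilon$) satisfies the maximum (resp.\ minimum) principle, because adding a constant preserves these principles.

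\textbf{Continuity, the main obstacle.} Lieberman's $C^{1,\alpha}$ theorem is no longer available, since the obstacles are merely continuous. I would instead use continuity of solutions to double-obstacle problems with continuous obstacles. This is Farnana~\cite[Theorem~3.9]{F2009}, applied locally on balls $B\Subset\Omega$, where $v$ is a local minimizer of the double-obstacle problem with the continuous obstacles $u\pm\varepsilon$. This gives a continuous representative of $v$. On this representative the almost-everywhere inequalities $u-\varepsilon\le v\le u+\varepsilon$ hold everywhere, since both sides are continuous. I expect this to be the step needing the most care: one must check that the cited result applies to local double-obstacle minimizers without boundedness assumptions, which is fine because $v$ is locally bounded between the continuous obstacles.

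\textbf{Monotonicity.} This is the truncation argument of Proposition~\ref{prop3} verbatim, and it used only continuity of $v$. Fix $V\Subset\Omega$ and put $m=\max_{\partial V}v$. Then $L\le v\le m$ on $\partial V$, so the maximum principle for $L$ gives $L\le m$ on $V$. Consequently $\min\{v,m+\delta\}$ on $V$ (and $v$ elsewhere) is admissible. It has the same boundary values, because $(v-m-\delta)_+$ has compact support in $V$, and it has no larger energy. Uniqueness forces it to equal $v$, and letting $\delta\downarrow0$ gives the maximum principle for $v$. The minimum principle follows symmetrically, using the minimum principle for $R$.

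\textbf{Convergence.} We have $|v-u|\le\varepsilon$ everywhere, which gives uniform convergence and, since $|\Omega|<\infty$, convergence in $L^p$. The gradients are bounded in $L^p$ by the energy bound, and every weak limit is $\nabla u$ by integration by parts. Lower semicontinuity together with $\Ep(T_\varepsilon u)\le\Ep(u)$ gives convergence of the norms. Uniform convexity of $L^p$ then yields strong convergence in $W^{1,p}$, exactly as in the proof of Theorem~\ref{thm1}.
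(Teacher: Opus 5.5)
Your proposal is correct and follows essentially the same route as the paper: existence and uniqueness as in Proposition~\ref{prop3}, continuity from Farnana's theorem for continuous double obstacles, the truncation argument for both principles, and the uniform-convexity argument from the proof of Theorem~\ref{thm1} for strong convergence. The only difference is cosmetic: you apply Farnana's result locally on balls, after observing that $v$ restricts to a double-obstacle minimizer there, whereas the paper applies it directly on $\Omega$.
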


\begin{proof}
Existence and uniqueness follow as in Proposition~\ref{prop3}.
Farnana~\cite[Theorem~3.9]{F2009}, applied in Euclidean space,
gives a continuous representative when both real-valued obstacles
are continuous. Its hypotheses hold for $u-\varepsilon$ and
$u+\varepsilon$. Both obstacles inherit the needed one-sided
principles from $u$. The truncation argument in
Proposition~\ref{prop3} applies without differentiability of the
obstacles. The error is bounded by $\varepsilon$, and the strong
convergence follows by exactly the uniform convexity argument in
the proof of Theorem~\ref{thm1}.
\end{proof}

\begin{proposition}\label{prop5}
Let $n\ge2$, $\Omega=B(0,2)$, $p=2$, and
\[
 u(x)=x_1+|x_n|.
\]
Put
\[
 c_n=\frac1{|\mathbb S^{n-1}|}
           \int_{\mathbb S^{n-1}}|\omega_n|\dd\mathcal H^{n-1}(\omega)>0.
\]
For $0<\varepsilon<c_n/2$, the function $T_\varepsilon u$ does
not belong to $C^1(B(0,1))$.
\end{proposition}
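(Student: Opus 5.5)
The plan is to argue by contradiction with the mean-value inequality for superharmonic functions, centered at the origin. Write $v=T_\varepsilon u$ for the continuous representative from Proposition~\ref{prop4}, and suppose $v\in C^1(B(0,1))$. The obstacles are
\[
 L=x_1+|x_n|-\varepsilon,\qquad R=x_1+|x_n|+\varepsilon .
\]
The aim is to show that $\Delta v\le0$ in the sense of distributions on $B(0,1)$. Granting this, $v$ is superharmonic there. For $0<r<1$ the mean value inequality and $v\ge L$ give
\[
 v(0)\ \ge\ \avg_{\partial B(0,r)}v\ \ge\ \avg_{\partial B(0,r)}(x_1+|x_n|)-\varepsilon=rc_n-\varepsilon,
\]
since $x_1$ has zero spherical mean and $|x_n|$ has mean $rc_n$ on $\partial B(0,r)$. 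On the other hand $v(0)\le R(0)=\varepsilon$. Letting $r\uparrow1$ yields $c_n\le2\varepsilon$, which contradicts $\varepsilon<c_n/2$.

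To prove $\Delta v\le0$ I would first record the sign structure of the variational inequality. On the open set $\{v<R\}$, the variations $v+t\varphi$ with $\varphi\ge0$ and small $t>0$ are admissible, since raising $v$ keeps $v\ge L$. Minimality then gives $\int\nabla v\cdot\nabla\varphi\ge0$, so $\Delta v\le0$ there. Symmetrically, $\Delta v\ge0$ on $\{v>L\}$. Hence $\Delta v$ is a signed Radon measure whose positive part is concentrated on the coincidence set $\{v=R\}$.

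The main input is a Lewy--Stampacchia type bound for the double-obstacle problem,
\[
 (\Delta v)^+\le(\Delta R)^+ .
\]
The distributional Laplacian of $R$ is $2\,\Hm^{n-1}\lfloor\{x_n=0\}\ge0$. I would prove this bound by the usual penalization argument: approximate the upper constraint by a penalty term, show that the penalized solutions satisfy $\Delta v_k\le(\Delta R)^+$, and pass to the limit. Alternatively, one can argue directly that on the open half-spaces, where $R$ is affine and harmonic, the function $v-R\le0$ is superharmonic on $\{v<R\}$, so no positive mass can sit on the part of $\{v=R\}$ off the hyperplane. This step is the one I expect to be the real obstacle, because the obstacle $R$ is only Lipschitz across $\{x_n=0\}$. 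The outcome should be that $(\Delta v)^+$ is absolutely continuous with respect to $\Hm^{n-1}$ on the hyperplane, with density at most $2$.

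It then remains to show that a $C^1$ function puts no mass of this kind on the hyperplane. For a small cube $Q=S\times(-h,h)$ with $S\subset\{x_n=0\}\cap B(0,1)$, the divergence theorem, applied after mollification, expresses $\Delta v(Q)$ as the flux of $\nabla v$ through $\partial Q$. As $h\downarrow0$, the flux through the lateral sides is $O(h)$, and the fluxes through the top and bottom faces cancel in the limit by continuity of $\partial_nv$ across $\{x_n=0\}$. Hence $\Delta v(S)=0$ for all such $S$, which forces $(\Delta v)^+=0$ in $B(0,1)$. This gives $\Delta v\le0$ and completes the contradiction.
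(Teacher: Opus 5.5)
Your proof is correct and has the same structure as the paper's. You confine $\Delta v$ to the hyperplane $\{x_n=0\}$, use $v\in C^1$ to cancel the normal fluxes across it, and contradict $2\varepsilon<c_n$ with spherical means about the origin.

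The difference is in the step you flagged as the real obstacle, and the paper needs no Lewy--Stampacchia bound there. In each open half-ball both obstacles are affine, hence harmonic. So the harmonic replacement of $v$ in any ball compactly contained in that half stays between $L$ and $R$ by the maximum principle. It is therefore admissible, has no larger energy, and equals $v$ by uniqueness. This makes $v$ harmonic off the hyperplane, harmonic on $B(0,1)$ after the flux argument, and lets the paper use the mean value \emph{equality}.

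Your route instead relies on a measure-data Lewy--Stampacchia inequality for a merely Lipschitz obstacle. That inequality holds, but it is heavier and would have to be quoted precisely. In return, it only has to deliver $\Delta v\le0$. That suffices, because your final estimate uses just $v\ge L$ on the sphere and $v\le R$ at the center.

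Your ``direct'' alternative is justified with the wrong inequality. Superharmonicity of $v-R$ on $\{v<R\}$ says nothing about the coincidence set. What works is that $v-R\le0$ is subharmonic on the open set $\{v>L\}\supset\{v=R\}$ within a half-ball. The strong maximum principle then forces $v\equiv R$, hence $\Delta v=0$, on every component of that set meeting $\{v=R\}$. The harmonic replacement above does the same job more simply.
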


\begin{proof}
The function $u$ is Lipschitz and monotone, since on each line
parallel to $e_1$ it is strictly increasing. More explicitly, the
two endpoints of the component of such a line in a relatively
compact open set give values below and above the value at an
interior point.

Write $v=T_\varepsilon u$. In either half-ball $\{x_n>0\}$ or
$\{x_n<0\}$, both obstacles are affine and harmonic. In a ball
compactly contained in one half, the harmonic replacement of $v$
lies between those obstacles by the maximum principle. It is
therefore an admissible energy-decreasing replacement. Uniqueness
implies that $v$ is harmonic in each half-ball.

If $v\in C^1(B(0,1))$, integration by parts on the two sides of
$\{x_n=0\}$ shows that the normal fluxes cancel. Thus $v$ is
distributionally harmonic throughout $B(0,1)$, and hence harmonic.
Write $\avg_E$ for integration over a sphere $E$ divided by its
surface measure. For every $0<r<1$, the mean-value property and
$\|v-u\|_\infty\le\varepsilon$ then imply
\[
 r c_n
 =\avg_{\partial B(0,r)}u-u(0)
 \le\left|\avg_{\partial B(0,r)}(u-v)\right|
       +|v(0)-u(0)|
 \le2\varepsilon.
\]
Letting $r\uparrow1$ contradicts $2\varepsilon<c_n$.
\end{proof}

\begin{remark}\label{rem4}
Proposition~\ref{prop5} is an obstruction to a particular
minimization procedure, not an obstruction to approximation.
For the same $u$ an explicit smooth sequence is available.
Let $\rho(x)=4-|x|^2$, $0<\delta<1/4$, and set
\[
 \sigma_\delta(x)=\sqrt{x_n^2+\delta^2\rho(x)^2},\qquad
 g_\delta(x)=x_1+\sigma_\delta(x).
\]
Then $g_\delta\in C^\infty(\Omega)$,
$\|g_\delta-u\|_\infty\le4\delta$, and
$\partial_1g_\delta\ge1-4\delta>0$, so $g_\delta$ is monotone.
It extends continuously with the same boundary values as $u$.
A direct computation gives
\[
 |\nabla \sigma_\delta|^2-1
 =\frac{-4\delta^2\rho x_n^2+
       \delta^2\rho^2(4\delta^2|x|^2-1)}
       {x_n^2+\delta^2\rho^2}\le0.
\]
Since $\partial_1\sigma_\delta$ is odd in $x_1$,
\[
 E_2(g_\delta)=|\Omega|+
       \int_\Omega|\nabla \sigma_\delta|^2\dd x
 \le2|\Omega|=E_2(u).
\]
The uniform gradient bound and a.e. gradient convergence show
$g_\delta\to u$ strongly in $W^{1,r}(\Omega)$ for every
$1\le r<\infty$. The zero boundary difference belongs to
$W^{1,r}_0(\Omega)$.
\end{remark}

\section{Planar gradients and regularized minimizers}\label{sec6}

From this section through Section~\ref{sec8}, the ambient
dimension is two. The extra structure is the quasiregularity of
the complex gradient, which provides local uniform convergence
of gradients and controls their topological degrees.

\subsection{Quasiregular gradients and the critical-point index}

\begin{definition}\label{def3}
A continuous mapping $F:V\to\R^2$, with
$F\in W^{1,2}_{\mathrm{loc}}(V,\R^2)$, is
\emph{$K$-quasiregular} if
\begin{equation*}
 \|DF(x)\|_{\mathrm{op}}^2\le K\det DF(x)
 \quad\text{for almost every }x\in V.
\end{equation*}
We allow constant mappings when using compactness statements.
\end{definition}

This is the planar analytic convention used in \cite{AIM2009}.
Nonconstant quasiregular maps are open and discrete and have positive
local degree.  One way to see these properties is through the
Sto\"ilow factorization into a holomorphic map after a
quasiconformal change of variable \cite{AIM2009}.

We shall use two established facts, with their sources made explicit.
First, by \cite{BI1987,M1988}, if $u$ is planar $p$-harmonic then
\begin{equation*}
 F_u=u_x-iu_y
\end{equation*}
is quasiregular or constant.  In particular a nonconstant solution has
a closed discrete critical set.  Second, \cite[Theorem~1]{HM2020}
says that a family of planar $K$-quasiregular maps with a uniform
$L^s(V)$ bound, for any $s>0$, is relatively compact for local uniform
convergence, and its limits are quasiregular maps or constants with
values in $\C$.  The latter assertion is stronger than compactness merely
in the spherical metric and is the version needed here.

\subsubsection*{Index and Morse critical points}

For a continuous vector field $X$ with an isolated zero $x_0$, its
index is the degree of $X/|X|$ on a small positively oriented circle
about $x_0$.  We use the standard degree and homotopy conventions of
\cite{M1965}; for applications to planar elliptic equations see
\cite{AM1992,AM1994}.  Since complex conjugation reverses orientation,
\begin{equation*}
 \ind_{x_0}(\nabla u)=-\deg(F_u,B_r(x_0),0).
\end{equation*}
At a critical point of a nonconstant planar $p$-harmonic function the
degree on the right is a positive integer.  This proves the sign
assertion in Theorem~\ref{thm3}.

A critical point of a smooth real function is \emph{nondegenerate}
if its Hessian is invertible.  A function all of whose critical
points are nondegenerate is a Morse function.  In two dimensions a
critical point with negative Hessian determinant is a saddle and has
gradient index $-1$.  By the Morse lemma it has local smooth coordinates
in which the function is its critical value plus $s^2-t^2$;
see \cite{M1963}.  Sard's theorem \cite{S1942} will be applied to
the smooth map $\nabla v:\R^2\supset B\to\R^2$ in order to make
its zeros nondegenerate after a small translation of the target.

\subsection{Regularized minimizers and \texorpdfstring{$C^1$}{C1} convergence}\label{subsec2}

Let $B=B_R(x_0)\Subset V$, and let $u$ be $p$-harmonic on a
neighborhood of $\overline B$.  For $0<\varepsilon\le1$ put
\begin{equation*}
 J_\varepsilon(v;B)
   =\int_B(\varepsilon^2+|\nabla v|^2)^{p/2}\dd x.
\end{equation*}
The admissible class will always be $u+W^{1,p}_0(B)$.

\begin{lemma}\label{lem7}
There is a unique minimizer $v_\varepsilon$ of
$J_\varepsilon(\cdot;B)$ in $u+W^{1,p}_0(B)$.  It belongs to
$C^\infty(B)$ and solves
\begin{equation}\label{eq15}
 \diver\bigl((\varepsilon^2+|\nabla v_\varepsilon|^2)^{(p-2)/2}
                    \nabla v_\varepsilon\bigr)=0.
\end{equation}
If $u$ is nonconstant in $B$, then $v_\varepsilon$ is nonconstant.
\end{lemma}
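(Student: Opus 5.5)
Existence and uniqueness of the minimizer follow from the direct method. The admissible class $u+W^{1,p}_0(B)$ is nonempty and weakly closed, and it contains $u\in W^{1,p}(B)$, since $u$ is $C^{1,\alpha}$ on a neighborhood of $\overline B$. The integrand $\xi\mapsto(\varepsilon^2+|\xi|^2)^{p/2}$ is strictly convex, with $|\xi|^p\le(\varepsilon^2+|\xi|^2)^{p/2}\le C(\varepsilon^p+|\xi|^p)$. Poincar\'e's inequality on $W^{1,p}_0(B)$ therefore bounds minimizing sequences, and reflexivity together with weak lower semicontinuity gives a minimizer. Strict convexity gives equality of the gradients of two minimizers, and Poincar\'e's inequality then gives equality of the functions, exactly as in Proposition~\ref{prop3}.

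The Euler--Lagrange equation \eqref{eq15} holds weakly by differentiating $t\mapsto J_\varepsilon(v_\varepsilon+t\varphi;B)$ at $t=0$ for $\varphi\in C_c^\infty(B)$. The growth bound $|\nabla_\xi(\varepsilon^2+|\xi|^2)^{p/2}|\le C(1+|\xi|^{p-1})$ justifies differentiation under the integral. Interior smoothness is the main obstacle. The operator is uniformly elliptic where the gradient is bounded, with ellipticity ratio comparable to $\max\{1,p-1\}$ and constants depending on $\varepsilon$ and on a gradient bound. A gradient bound is not automatic for $W^{1,p}$ solutions.

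I would quote the standard regularity for nondegenerate $p$-type equations: Uhlenbeck~\cite{U1977}, Tolksdorf~\cite{T1984}, and DiBenedetto~\cite{D1983}, whose estimates cover the regularized structure $(\varepsilon^2+|\xi|^2)^{(p-2)/2}\xi$. These give $\nabla v_\varepsilon\in L^\infty_{\loc}$ and then $C^{1,\beta}_{\loc}$. Alternatively, in the plane one could argue through difference quotients, with $W^{2,2}_{\loc}$ estimates for the weighted gradient, and De Giorgi applied to the differentiated equation. Once $\nabla v_\varepsilon$ is locally bounded and H\"older, the coefficient matrix $a_{ij}(\nabla v_\varepsilon)$ is H\"older continuous and uniformly elliptic on compact subsets. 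Schauder theory~\cite{GT2001} for the differentiated equation $\partial_i\bigl(a_{ij}(\nabla v)\partial_j\partial_k v\bigr)=0$ then gives $C^{2,\beta}$, and bootstrapping gives $C^\infty(B)$. Here smoothness of $\xi\mapsto(\varepsilon^2+|\xi|^2)^{(p-2)/2}\xi$ is essential, and it holds because $\varepsilon>0$.

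For nonconstancy I would argue by contradiction. Suppose $v_\varepsilon\equiv c$ on $B$. Then $u-c\in W^{1,p}_0(B)$, and since $u$ is continuous on $\overline B$, this forces $u=c$ on $\partial B$. Indeed, for continuous functions up to the boundary of a ball, $W^{1,p}_0$ membership implies vanishing boundary values; this can be seen, for instance, from the Lipschitz-domain trace equality recalled in the introduction. Now $u$ is $p$-harmonic on $B$, continuous on $\overline B$, and equal to $c$ on $\partial B$. The comparison principle, or uniqueness in Proposition~\ref{prop1} applied on $B$, with $c$ itself $p$-harmonic, gives $u\equiv c$ in $B$. This contradicts the assumption that $u$ is nonconstant in $B$.
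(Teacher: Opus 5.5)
Your proposal is correct and follows the paper's proof essentially step by step: the direct method with strict convexity and Poincar\'e's inequality, Tolksdorf-type $C^{1,\alpha}_{\loc}$ regularity followed by a Schauder bootstrap, and nonconstancy from $u-c\in W^{1,p}_0(B)$. The detour through continuous boundary values and the comparison principle is unnecessary. Testing the $p$-harmonic equation with $u-c$, equivalently applying Proposition~\ref{prop1} with $w=c$, gives $\int_B|\nabla u|^p=0$ directly, and this is exactly what the paper does.
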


\begin{proof}
The integrand is strictly convex in its gradient variable.  Its Hessian,
apart from the positive scalar factor
$p(\varepsilon^2+|\xi|^2)^{(p-2)/2}$, is
\begin{equation*}
 I+(p-2)\frac{\xi\otimes\xi}{\varepsilon^2+|\xi|^2}.
\end{equation*}
Its eigenvalues lie between $\min\{1,p-1\}$ and
$\max\{1,p-1\}$.  Coercivity follows from
$J_\varepsilon(v;B)\ge\|\nabla v\|_{L^p(B)}^p$ and Poincar\'e's
inequality for $v-u$.  The direct method gives a minimizer, and strict
convexity together with the boundary condition gives uniqueness.
The first variation is \eqref{eq15}.

For fixed $\varepsilon>0$ the vector field in \eqref{eq15} has the
standard $p$-growth and ellipticity structure, with weight
$(\varepsilon^2+|\xi|^2)^{(p-2)/2}$.  The scalar interior regularity
theorem in \cite{T1984} gives
$v_\varepsilon\in C^{1,\alpha}_{\mathrm{loc}}(B)$.
Its gradient is therefore bounded on each compact subdisk.  There the
coefficient matrix is uniformly positive definite and is a smooth
function of the gradient.  The usual quasilinear interior Schauder
bootstrap gives $C^\infty$ regularity; see \cite{GT2001}.  Only
fixed parameter regularity is used in this step.

In nondivergence form the smooth solution satisfies
\begin{equation}\label{eq16}
 \tr(A_\varepsilon D^2v_\varepsilon)=0,
 \qquad
 A_\varepsilon
 =I+(p-2)\frac{\nabla v_\varepsilon\otimes\nabla v_\varepsilon}
                    {\varepsilon^2+|\nabla v_\varepsilon|^2}.
\end{equation}
The matrix $A_\varepsilon$ has the uniform ellipticity bounds just
given.  If $v_\varepsilon=c$, then $u-c\in W^{1,p}_0(B)$.
Testing the $p$-harmonic equation for $u$ with $u-c$ gives
$\int_B|\nabla u|^p=0$.  Thus $u$ is constant, contrary to the
hypothesis.
\end{proof}

\begin{lemma}\label{lem8}
As $\varepsilon\downarrow0$, the minimizers satisfy
\begin{equation}\label{eq17}
 v_\varepsilon\longrightarrow u
 \quad\text{strongly in }W^{1,p}(B).
\end{equation}
\end{lemma}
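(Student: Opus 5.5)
The argument is the standard $\Gamma$-convergence/energy-comparison one: compare $v_\varepsilon$ with $u$ in both functionals, and use strict convexity of the limit problem together with uniform convexity of $L^p$. Since $u$ is $p$-harmonic near $\overline B$, it lies in $W^{1,p}(B)$ and belongs to the admissible class. Minimality of $v_\varepsilon$ gives
\[
 \int_B|\nabla v_\varepsilon|^p\dd x\le J_\varepsilon(v_\varepsilon;B)\le J_\varepsilon(u;B)\le\int_B(1+|\nabla u|^p)\cdot C_p\dd x,
\]
which is uniform in $\varepsilon\in(0,1]$. Poincar\'e's inequality for $v_\varepsilon-u\in W^{1,p}_0(B)$ then bounds $v_\varepsilon$ in $W^{1,p}(B)$. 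By reflexivity, every sequence $\varepsilon_k\downarrow0$ has a subsequence with $v_{\varepsilon_k}\rightharpoonup v$ weakly in $W^{1,p}(B)$. The limit satisfies $v-u\in W^{1,p}_0(B)$, because that affine subspace is weakly closed.

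Next we identify the limit and the limiting energy. Monotonicity of $J_\varepsilon$ in $\varepsilon$ and dominated convergence give $J_\varepsilon(u;B)\to\Ep(u;B)$ as $\varepsilon\downarrow0$. Weak lower semicontinuity of the $L^p$ norm gives
\[
 \Ep(v;B)\le\liminf_k\Ep(v_{\varepsilon_k};B)\le\limsup_k\Ep(v_{\varepsilon_k};B)\le\limsup_k J_{\varepsilon_k}(u;B)=\Ep(u;B).
\]
Now apply Proposition~\ref{prop1} on the bounded open set $B$: $u$ is $p$-harmonic there and $v-u\in W^{1,p}_0(B)$, so $\Ep(v;B)\ge\Ep(u;B)$, with equality only when $v=u$. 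Hence $v=u$, and all the inequalities above are equalities. In particular $\|\nabla v_{\varepsilon_k}\|_{L^p(B)}\to\|\nabla u\|_{L^p(B)}$.

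Weak convergence $\nabla v_{\varepsilon_k}\rightharpoonup\nabla u$ in $L^p(B;\R^2)$, combined with convergence of norms, gives strong convergence of the gradients by uniform convexity of $L^p$ for $1<p<\infty$ (the Radon--Riesz property). The Poincar\'e inequality applied to $v_{\varepsilon_k}-u\in W^{1,p}_0(B)$ then upgrades this to strong convergence in $W^{1,p}(B)$. The limit $u$ does not depend on the subsequence, so the full family converges and \eqref{eq17} holds.

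The only delicate point is the comparison $\limsup_k\Ep(v_{\varepsilon_k})\le\Ep(u)$. This works because $J_\varepsilon$ dominates $\Ep$ pointwise, and because $J_\varepsilon(u;B)\to\Ep(u;B)$, which needs only $u\in W^{1,p}(B)$ and $|B|<\infty$. The remaining steps follow the proof of Theorem~\ref{thm1}.
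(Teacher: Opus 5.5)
Your proposal is correct and follows essentially the same route as the paper: comparison of $J_\varepsilon(v_\varepsilon;B)$ with $J_\varepsilon(u;B)$, weak compactness via Poincar\'e's inequality, identification of every cluster point with $u$ by uniqueness of the $p$-harmonic minimizer (Proposition~\ref{prop1}), and norm convergence plus uniform convexity of $L^p$. The only cosmetic difference is that you get convergence of the gradient norms from the equality chain for the weak limit. The paper instead uses the inequality $\Ep(v_\varepsilon;B)\ge\Ep(u;B)$ directly, which holds because $u$ minimizes $\Ep$ in the affine class.
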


\begin{proof}
Minimality yields
\begin{equation}\label{eq18}
 \int_B|\nabla v_\varepsilon|^p
 \le J_\varepsilon(v_\varepsilon;B)
 \le J_\varepsilon(u;B)
 \longrightarrow\Ep(u;B).
\end{equation}
The last convergence is dominated convergence, using
$(\varepsilon^2+|\nabla u|^2)^{p/2}
\le C_p(1+|\nabla u|^p)$ for $0<\varepsilon\le1$.
Poincar\'e's inequality makes $v_\varepsilon$ bounded in $W^{1,p}(B)$.
Every weak cluster point $v$ has $v-u\in W^{1,p}_0(B)$ and satisfies
\[
 \Ep(v;B)\le\liminf_{\varepsilon\downarrow0}
             \Ep(v_\varepsilon;B)\le\Ep(u;B).
\]
The $p$-harmonic function $u$ is the unique minimizer of $\Ep$ in
this affine class, by strict convexity.  Hence every weak cluster
point is $u$.  Also $\Ep(v_\varepsilon;B)\ge\Ep(u;B)$, so
\eqref{eq18} gives convergence of the $L^p$ norms of the gradients.
Uniform convexity of $L^p$, $1<p<\infty$, now gives strong convergence
of the gradients; see \cite{B2011}.  Poincar\'e's inequality for
$v_\varepsilon-u$ proves \eqref{eq17}.
\end{proof}

The next elementary matrix calculation makes the parameter independence
of the quasiregular bound explicit.

\begin{lemma}\label{lem9}
Set
\begin{equation*}
 K_p=\max\{p-1,(p-1)^{-1}\}.
\end{equation*}
For every $\varepsilon>0$ the complex gradient
$F_\varepsilon=(v_\varepsilon)_x-i(v_\varepsilon)_y$ is
$K_p$-quasiregular or constant in $B$.
\end{lemma}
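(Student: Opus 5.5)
Since $v_\varepsilon\in C^\infty(B)$ by Lemma~\ref{lem7}, the map $F_\varepsilon$ is smooth, and it is enough to verify the distortion inequality of Definition~\ref{def3} pointwise. Write $v=v_\varepsilon$, $F=v_x-iv_y$, and regard $F$ as the real map $(v_x,-v_y)$. Its Jacobian matrix is
\[
 DF=\begin{pmatrix} v_{xx}&v_{xy}\\ -v_{xy}&-v_{yy}\end{pmatrix}
 =\begin{pmatrix}1&0\\0&-1\end{pmatrix}D^2v .
\]
Hence $\|DF\|_{\mathrm{op}}=\|D^2v\|_{\mathrm{op}}$ and $\det DF=-\det D^2v$. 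The statement therefore reduces to a pointwise algebraic inequality for a symmetric $2\times2$ matrix $H=D^2v$ constrained by \eqref{eq16}: we must show $\tr(AH)=0$ with $A$ symmetric and eigenvalues in $[\min\{1,p-1\},\max\{1,p-1\}]$ implies $\|H\|_{\mathrm{op}}^2\le K_p(-\det H)$.

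At a fixed point we rotate coordinates so that $\nabla v$ lies on the first axis. Rotations preserve $\|H\|_{\mathrm{op}}$, $\det H$ and the structure of \eqref{eq16}. In these coordinates $A=\operatorname{diag}(\lambda,1)$ with
\[
\lambda=1+(p-2)\frac{|\nabla v|^2}{\varepsilon^2+|\nabla v|^2},
\]
which lies between $1$ and $p-1$. The equation becomes $\lambda h_{11}+h_{22}=0$. Write $h_{11}=a$, $h_{22}=-\lambda a$ and $h_{12}=b$. Then $-\det H=\lambda a^2+b^2\ge0$.

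The eigenvalues of $H$ are
\[
\frac{(1-\lambda)a\pm\sqrt{(1+\lambda)^2a^2+4b^2}}{2}.
\]
Their product is $-(\lambda a^2+b^2)$. Setting $t=\lambda a^2+b^2$, we need $\mu_{\max}^2\le K\,t$, where $\mu_{\max}=|\text{larger eigenvalue in modulus}|$. Equivalently, the ratio $\mu_{\max}/\mu_{\min}$ of the absolute eigenvalues must be at most $K$, since $\mu_{\max}\mu_{\min}=t$. We will check that this ratio is maximized when $b=0$. There the eigenvalues are $a$ and $-\lambda a$, giving ratio $\max\{\lambda,1/\lambda\}\le K_p$. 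When $a=0$ the ratio is $1$.

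For the general case, the main (though routine) step is to show that the ratio is monotone in $b^2/a^2$. Put $s=b^2/a^2$. The ratio $\rho$ solves $\rho+1/\rho=(\mu_{\max}^2+\mu_{\min}^2)/t=\|H\|_F^2/t=\bigl(1+\lambda^2+2s\bigr)/(\lambda+s)$. This expression is a Möbius function of $s$. It decreases from $(1+\lambda^2)/\lambda=\lambda+1/\lambda$ at $s=0$ toward $2$ as $s\to\infty$, because $1+\lambda^2\ge2\lambda$. Hence $\rho\le\max\{\lambda,\lambda^{-1}\}\le K_p$ for all $s$.

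Finally, at points where $H=0$ the inequality is trivial. If $F$ is constant, we are in the excluded alternative. This gives $K_p$-quasiregularity with a bound independent of $\varepsilon$.
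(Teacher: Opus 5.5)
Your proof is correct, but it takes a different route through the linear algebra than the paper does.

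\textbf{The paper's route.} The paper works in an orthonormal eigenbasis of $Q=D^2v_\varepsilon$. There the constraint $\tr(A_\varepsilon Q)=0$ becomes $a_{11}h=a_{22}k$, where $h$ and $-k$ are the eigenvalues of $Q$. The entries $a_{11},a_{22}$ are diagonal entries of $A_\varepsilon$, so they automatically lie between its extreme eigenvalues. The ratio $h/k=a_{22}/a_{11}\in[K_p^{-1},K_p]$ is then read off directly, with no eigenvalue computation.

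\textbf{Your route.} You diagonalize $A_\varepsilon$ instead, by aligning $\nabla v$ with the first axis. Then $D^2v$ remains a general matrix $\begin{pmatrix}a&b\\ b&-\lambda a\end{pmatrix}$. You therefore need the explicit eigenvalue formula and the monotonicity of $(1+\lambda^2+2s)/(\lambda+s)$ in $s=b^2/a^2$; its derivative is $-(1-\lambda)^2/(\lambda+s)^2\le0$, so that step is right. You also implicitly need $\rho\ge1$ and that $x\mapsto x+1/x$ increases on $[1,\infty)$ to pass from $\rho+1/\rho$ to $\rho$; this is fine.

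\textbf{A small mismatch.} Your announced reduction is the general claim for any symmetric $A$ with eigenvalues in $[\min\{1,p-1\},\max\{1,p-1\}]$. That general claim is exactly what the paper proves. Your computation only treats $A=\operatorname{diag}(\lambda,1)$. That suffices because \eqref{eq16} has this form, and the general case would follow by rescaling to $\lambda=\lambda_1/\lambda_2$.

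\textbf{Comparison.} The paper's argument is shorter and uses only the ellipticity bounds of $A_\varepsilon$, not its rank-one structure. Yours is more computational, but it identifies the extremal configuration $b=0$ (Hessian diagonal in the eigenbasis of $A_\varepsilon$). This shows that the pointwise algebraic bound $\max\{\lambda,\lambda^{-1}\}$ is attained. Both arguments yield the same $\varepsilon$-independent constant $K_p$, which is all that is used later.
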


\begin{proof}
Write $Q=D^2v_\varepsilon$ at a fixed point.  If $Q\ne0$, the
identity $\tr(A_\varepsilon Q)=0$ and positivity of $A_\varepsilon$
force the two eigenvalues of $Q$ to have opposite signs.  Write them
as $h>0$ and $-k<0$.  In an orthonormal eigenbasis of $Q$ the
identity becomes $a_{11}h=a_{22}k$, where
\[
 \min\{1,p-1\}\le a_{11},a_{22}\le\max\{1,p-1\}.
\]
It follows that $K_p^{-1}\le h/k\le K_p$.  Identifying
$F_\varepsilon$ with the real map
$((v_\varepsilon)_x,-(v_\varepsilon)_y)$, we have
\[
 DF_\varepsilon=\begin{pmatrix}1&0\\0&-1\end{pmatrix}Q,
 \qquad J_{F_\varepsilon}=-\det Q=hk.
\]
Therefore
\begin{equation*}
 \|DF_\varepsilon\|_{\mathrm{op}}^2
   =\max\{h^2,k^2\}\le K_p hk=K_pJ_{F_\varepsilon}.
\end{equation*}
If $Q=0$, the same inequality is immediate.  Since
$F_\varepsilon$ is smooth, the Sobolev requirement in
Definition~\ref{def3} also holds.
\end{proof}

\begin{lemma}\label{lem10}
If $u$ is nonconstant in $B$, then $v_\varepsilon$ has no local
maximum or local minimum in $B$.
\end{lemma}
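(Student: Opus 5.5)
The plan is to combine the nondivergence form \eqref{eq16} with the strong maximum principle, and then rule out constancy on a subdisk by analyticity. Suppose $v_\varepsilon$ has a local maximum at $z\in B$; the minimum case follows by applying the argument to $-v_\varepsilon$, which minimizes the same functional for the data $-u$. Because $v_\varepsilon$ is smooth by Lemma~\ref{lem7}, the matrix $A_\varepsilon(x)$ in \eqref{eq16} is a continuous, uniformly elliptic coefficient matrix on every compact subdisk. So $v_\varepsilon$ solves a linear uniformly elliptic equation $\tr(A(x)D^2w)=0$ with continuous coefficients and no zero-order term. On a small disk $D\ni z$ the value $v_\varepsilon(z)$ is an interior maximum. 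The Hopf--Calabi strong maximum principle for nondivergence operators \cite{GT2001} then gives $v_\varepsilon\equiv v_\varepsilon(z)$ on $D$.

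The next step propagates this constancy from $D$ to all of $B$. One option is a connectedness argument: the set of points near which $v_\varepsilon$ equals the constant $c=v_\varepsilon(z)$ is open and nonempty. To show it is closed I would apply the strong maximum principle again at a boundary point of this set, but that point need not be a local extremum, so this alone does not close the argument. I would therefore use unique continuation instead. For fixed $\varepsilon>0$, \eqref{eq15} is a quasilinear elliptic equation with real-analytic structure, since the integrand $(\varepsilon^2+|\xi|^2)^{p/2}$ is analytic in $\xi$. By Morrey's analyticity theorem for analytic nonlinear elliptic equations, $v_\varepsilon$ is real analytic in $B$. An analytic function that is constant on an open subset of the connected disk $B$ is constant on $B$. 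A lighter alternative, which avoids analyticity, is to note that $\nabla v_\varepsilon$ solves a linear elliptic system; more simply, $F_\varepsilon$ is $K_p$-quasiregular or constant by Lemma~\ref{lem9}. A quasiregular map vanishing on an open set is constant, because nonconstant quasiregular maps are discrete. Hence $F_\varepsilon\equiv0$ on $B$, and $v_\varepsilon$ is constant on $B$. I prefer this quasiregular route, since it uses only what the paper has already established.

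Once $v_\varepsilon$ is constant in $B$, Lemma~\ref{lem7} forces $u$ to be constant in $B$, which contradicts the hypothesis. I expect the main obstacle to be the global step. The strong maximum principle only gives local constancy, and the key idea is to convert vanishing of $\nabla v_\varepsilon$ on an open disk into global vanishing. Lemma~\ref{lem9}, together with the discreteness of nonconstant quasiregular maps, does exactly this. The only remaining technical point is to confirm that the strong maximum principle applies with merely continuous coefficients. Here this is unproblematic, because $A_\varepsilon$ is in fact smooth on compact subdisks.
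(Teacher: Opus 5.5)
Your proposal is correct and is essentially the paper's proof. You apply the strong maximum principle to \eqref{eq16} on a small disk, then use Lemma~\ref{lem9} and the discreteness of nonconstant quasiregular maps to make $\nabla v_\varepsilon$ vanish on all of $B$, which contradicts the nonconstancy in Lemma~\ref{lem7}. The analyticity alternative you mention is also valid but is not needed.
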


\begin{proof}
At a local maximum the strong maximum principle applied to
\eqref{eq16} on a sufficiently small disk makes $v_\varepsilon$
constant on that disk.  Its complex gradient then vanishes on a
nonempty open set.  By Lemma~\ref{lem9}, this gradient is
quasiregular or constant.  A nonconstant quasiregular map has discrete
point preimages, so in either case the gradient must vanish identically
in $B$.  This contradicts the nonconstancy assertion of
Lemma~\ref{lem7}.  The argument at a local minimum is identical.
\end{proof}

\begin{proposition}\label{prop6}
For every $0<s<R$,
\begin{equation}\label{eq19}
 \|v_\varepsilon-u\|_{C^1(\overline{B_s(x_0)})}
 \longrightarrow0
 \quad\text{as }\varepsilon\downarrow0.
\end{equation}
\end{proposition}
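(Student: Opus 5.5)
The plan is to combine the uniform quasiregularity of Lemma~\ref{lem9} with the compactness theorem of Hinkkanen and Martin to obtain local uniform convergence of gradients. The function values will then follow from the $W^{1,p}$ convergence of Lemma~\ref{lem8}. It suffices to show that every sequence $\varepsilon_k\downarrow0$ has a subsequence along which $v_{\varepsilon_k}\to u$ in $C^1(\overline{B_s(x_0)})$. If $u$ is constant in $B$, then $v_\varepsilon=u$ by uniqueness in Lemma~\ref{lem7}, since the constant minimizes $J_\varepsilon$ in its own boundary class. So assume $u$ is nonconstant.

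\emph{Step 1: gradients.} By Lemma~\ref{lem9} the maps $F_\varepsilon=(v_\varepsilon)_x-i(v_\varepsilon)_y$ are $K_p$-quasiregular or constant on $B$, with $K_p$ independent of $\varepsilon$. By \eqref{eq18} they are uniformly bounded in $L^p(B)$. The cited compactness theorem \cite[Theorem~1]{HM2020} then gives a subsequence along which $F_{\varepsilon_k}\to F$ locally uniformly on $B$, where $F$ is a $\C$-valued quasiregular map or constant. The finiteness of $F$ is the point of using this theorem rather than spherical compactness. By Lemma~\ref{lem8}, $F_{\varepsilon_k}\to F_u$ in $L^p(B)$. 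Hence $F=F_u$ almost everywhere, and by continuity everywhere. Because $\overline{B_s(x_0)}$ is compact in $B$, we obtain $\sup_{\overline{B_s}}|\nabla v_{\varepsilon_k}-\nabla u|\to0$.

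\emph{Step 2: values.} Fix $s<s'<R$. The functions $w_k=v_{\varepsilon_k}-u$ tend to $0$ in $W^{1,p}(B)$, and by Step~1 their gradients are uniformly bounded on $\overline{B_{s'}}$. I would write $w_k(x)=\avg_{B_{s'}}w_k+(w_k(x)-\avg_{B_{s'}}w_k)$ on the convex disk. The mean tends to $0$ by $L^p$ convergence. The oscillation term is bounded by $2s'\sup_{B_{s'}}|\nabla w_k|$, which tends to $0$ by Step~1 applied with $s'$ in place of $s$. Hence $\sup_{\overline{B_s}}|w_k|\to0$.

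Since every sequence has such a subsequence with the same limit, the full family converges, which gives \eqref{eq19}.

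The main obstacle is Step~1. Its key inputs are the $\varepsilon$-independent distortion bound and the identification of the limit as a finite-valued map rather than $\infty$ in the spherical sense. Both are supplied by earlier results: Lemma~\ref{lem9} gives the first, and the stated form of \cite{HM2020} gives the second. The integrability bound needed there is \eqref{eq18}.
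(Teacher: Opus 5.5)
Your proposal is correct and follows essentially the same route as the paper: the $\varepsilon$-independent distortion bound of Lemma~\ref{lem9}, the $L^p$ bound \eqref{eq18}, and the Hinkkanen--Martin compactness theorem give locally uniform convergence of the complex gradients, with the limit identified as $u_x-iu_y$ via Lemma~\ref{lem8}. The values are then controlled by the disk average plus the diameter times the gradient bound, which is the paper's line-segment inequality. Your separate treatment of constant $u$ and your use of the larger disk $B_{s'}$ are harmless but unnecessary, since constant maps are already allowed in the compactness statement (Definition~\ref{def3}) and the estimate works directly on $B_s$.
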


\begin{proof}
By \eqref{eq18}, $F_\varepsilon$ is uniformly bounded in $L^p(B)$.
Lemma~\ref{lem9} and \cite[Theorem~1]{HM2020} show that every sequence
$\varepsilon_j\downarrow0$ has a subsequence for which
$F_{\varepsilon_j}$ converges locally uniformly to a finite-valued
continuous map $F$.  Lemma~\ref{lem8} identifies this limit almost
everywhere with $u_x-iu_y$.  Both maps are continuous, so the equality
holds everywhere.  Every convergent subsequence has the same limit;
the compactness assertion therefore gives
\[
 \nabla v_\varepsilon\longrightarrow\nabla u
 \quad\text{locally uniformly in }B.
\]
For completeness, put $h_\varepsilon=v_\varepsilon-u$.  On a compact
subdisk $B_s$ the line segment inequality gives
\[
 \sup_{B_s}|h_\varepsilon|
 \le |B_s|^{-1/p}\|h_\varepsilon\|_{L^p(B_s)}
       +2s\sup_{B_s}|\nabla h_\varepsilon|.
\]
Both terms tend to zero, proving \eqref{eq19}.
\end{proof}

\begin{remark}\label{rem5}
The proof uses classical interior regularity to make each fixed
$v_\varepsilon$ smooth, and quasiregular compactness to control the
family as $\varepsilon$ tends to zero.  No estimate for the boundary
gradient of $v_\varepsilon$ is required.  All joining operations below
take place in a compact subdisk of $B$.
\end{remark}

\section{Local replacement and resolution of critical points}\label{sec7}

\subsection{Proof of the local smoothing theorem}

\begin{proof}[Proof of Theorem~\ref{thm2}]
Choose $r>0$ and $R>2r$ so that
$\overline{B_{2r}(x_0)}\subset O$ and
$\overline{B_R(x_0)}\subset V$. Since $\Crit(u)=\{x_0\}$,
$u$ is nonconstant on $B_R(x_0)$; otherwise its gradient would
vanish throughout that disk. Let
$\chi\in C_c^\infty(B_{2r}(x_0))$ satisfy
$0\le\chi\le1$ and $\chi=1$ on $B_r(x_0)$.  Construct
$v_\varepsilon$ in $B_R(x_0)$ as in Subsection~\ref{subsec2}, and set
\begin{equation}\label{eq20}
 w_\varepsilon=u+\chi(v_\varepsilon-u)
 \quad\text{in }B_R(x_0),
 \qquad w_\varepsilon=u\quad\text{outside }B_R(x_0).
\end{equation}
Near $x_0$ this is $v_\varepsilon$.  Away from $x_0$, the original
function is smooth.  Thus $w_\varepsilon\in C^\infty(V)$ and
$w_\varepsilon-u$ has compact support in $O$.

On the closed annulus
$A=\overline{B_{2r}(x_0)}\setminus B_r(x_0)$, the only-critical-point
assumption gives
\begin{equation*}
 c_A:=\min_A|\nabla u|>0.
\end{equation*}
The gradient of the difference is
\begin{equation}\label{eq21}
 \nabla(w_\varepsilon-u)
 =\chi(\nabla v_\varepsilon-\nabla u)
     +(v_\varepsilon-u)\nabla\chi.
\end{equation}
Proposition~\ref{prop6} implies that this converges uniformly to zero
on $\overline{B_{2r}(x_0)}$.  For small enough $\varepsilon$,
\begin{equation}\label{eq22}
 |\nabla w_\varepsilon|\ge c_A/2\quad\text{on }A.
\end{equation}
Inside $B_r(x_0)$, the function $w_\varepsilon=v_\varepsilon$ has no
local extrema by Lemma~\ref{lem10}.  On $A$ and outside
$B_{2r}(x_0)$ its gradient does not vanish.  Hence there is no local
extremum anywhere in $V$, including the two joining circles.

Equations \eqref{eq20} and \eqref{eq21} imply convergence to zero of
the $C^1(V)$ norm of the difference.  Since that difference is supported
in a fixed disk of finite area, its $W^{1,p}(V)$ norm also tends to
zero.  Choose $\varepsilon$ to obtain \eqref{eq1}.  A compactly
supported $W^{1,p}$ function in $V$ lies in $W^{1,p}_0(V)$, by
interior mollification.  This proves all the assertions.
\end{proof}

\begin{remark}\label{rem6}
The joining estimate uses both terms in \eqref{eq21}.  Uniform
convergence of the values and strong convergence of the gradients in
$L^p$ alone would not provide \eqref{eq22}.  The local uniform
convergence of the gradients is the additional information supplied by
Proposition~\ref{prop6}.
\end{remark}

\subsection{Proof of the saddle resolution theorem}

\begin{proof}[Proof of Theorem~\ref{thm3}]
Use the disks and cutoff from the preceding proof, decreasing $r$ if
necessary.  Proposition~\ref{prop6} implies, for sufficiently small
$\varepsilon$, that $\nabla v_\varepsilon$ is nonzero on
$\partial B_r(x_0)$ and that
\begin{equation}\label{eq23}
 \deg(\nabla v_\varepsilon,B_r(x_0),0)
   =\deg(\nabla u,B_r(x_0),0)=-m.
\end{equation}
Indeed the straight homotopy between the two gradients stays away
from zero on the boundary circle.

Choose a vector $a\in\R^2$, as small as needed, such that $-a$ is a
regular value of the smooth mapping $\nabla v_\varepsilon$ on
$B_r(x_0)$.  Such vectors exist arbitrarily close to zero by Sard's
theorem \cite{S1942}.  Also choose $|a|$ smaller than half the minimum
of $|\nabla v_\varepsilon|$ on the boundary circle.  Set
\[
 z_{\varepsilon,a}(x)=v_\varepsilon(x)+a\cdot(x-x_0).
\]
Every critical point of $z_{\varepsilon,a}$ in $B_r(x_0)$ is
nondegenerate.  The coefficient matrix $A_\varepsilon(x)$ in
\eqref{eq16}, now regarded as a given matrix field, satisfies
\begin{equation}\label{eq24}
 \tr(A_\varepsilon D^2z_{\varepsilon,a})=0,
\end{equation}
because the added function is affine.  A nonzero positive or negative
semidefinite Hessian cannot satisfy \eqref{eq24}.  At a nondegenerate
critical point the Hessian is consequently indefinite, and the point
is a saddle of index $-1$.

The zeros are finite in number: they are isolated by nondegeneracy,
and they lie in a compact subset of $B_r(x_0)$ since the gradient is
nonzero near its boundary.  Degree is unchanged by the small
translation $a$.  Additivity of degree and \eqref{eq23} therefore
give
\[
 -m=\sum_{\nabla z_{\varepsilon,a}(q)=0}
             \operatorname{sign}\det D^2z_{\varepsilon,a}(q)
       =-\#\Crit(z_{\varepsilon,a}|_{B_r(x_0)}).
\]
There are exactly $m$ such points.

Now define
\begin{equation}\label{eq25}
 w=u+\chi\bigl(v_\varepsilon-u+a\cdot(x-x_0)\bigr).
\end{equation}
First choose $\varepsilon$ small and then choose the regular-value
vector $a$ sufficiently small.  The estimate \eqref{eq22} still holds
on the joining annulus.  Thus the only critical points of $w$ are the
$m$ saddles inside $B_r(x_0)$.  The nondegenerate saddle form
excludes local extrema at these points, and the nonzero gradient
excludes local extrema everywhere else.
The resulting $w$ is strictly monotone.  Its support and norm
properties follow as in Theorem~\ref{thm2}.

Finally choose pairwise disjoint small disks about the $m$ saddles and
smooth cutoff functions $\theta_i$ supported in those disks and equal
to one near the corresponding saddle $q_i$.  The gradient of $w$ has
a positive lower bound on the compact sets where $\nabla\theta_i$
is supported.  For sufficiently small constants $b_i$, the function
$w+\sum_i b_i\theta_i$ has precisely the same critical points and
the same Hessians there.  The constants can be chosen to make
$w(q_i)+b_i$ pairwise distinct.  No new critical point is introduced,
and the additional perturbation can be arbitrarily small in $C^1$
and $W^{1,p}$.  Starting with a smaller error budget in
\eqref{eq25} proves the final assertion.
\end{proof}

\begin{remark}\label{rem7}
The number $m$ is forced once all replacement critical points are
nondegenerate saddles and the gradient on the surrounding circle is
fixed.  The theorem does not prescribe their positions or their
individual critical values, nor does it preserve every level set up
to homeomorphism.  The total gradient index is the preserved invariant.
\end{remark}

\section{Global smooth density and the energy condition}\label{sec8}

\subsection{Locally finite replacement}

The following proposition describes exactly the passage from the local
theorem to the global approximants of \cite{N2020a}.

\begin{proposition}\label{prop7}
Let $\Omega\subset\R^2$ be open and let
$v\in C^1(\Omega)\cap W^{1,p}(\Omega)$ be monotone.  Suppose that
$S\subset\Omega$ is closed and discrete, $v$ is smooth in
$\Omega\setminus S$, and every $s\in S$ has a neighborhood in which
$v$ is nonconstant and $p$-harmonic.  For every $\eta,\delta>0$
there is a smooth monotone $w\in W^{1,p}(\Omega)$ with
\begin{equation}\label{eq26}
 \|w-v\|_{C^1(\Omega)}+\|w-v\|_{W^{1,p}(\Omega)}<\eta,
 \qquad w-v\in W^{1,p}_0(\Omega),
\end{equation}
such that $\{w\ne v\}$ is contained in an open set of area less
than $\delta$.  All new critical points introduced near $S$ may be
chosen nondegenerate saddles.
\end{proposition}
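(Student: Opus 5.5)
The plan is to apply the local replacement of Theorems~\ref{thm2} and~\ref{thm3} independently at each point of $S$, on pairwise disjoint small disks, with error budgets summing to less than $\eta$. Then Lemma~\ref{lem2}, applied locally, will give global monotonicity.

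Enumerate $S=\{s_1,s_2,\dots\}$; it is countable because $S$ is closed and discrete in the second countable set $\Omega$. For each $k$ choose a radius $\rho_k>0$ with the following properties. The closed disks $\overline{B_{\rho_k}(s_k)}$ lie in $\Omega$, are pairwise disjoint, and form a locally finite family in $\Omega$; this is possible because $S$ has no accumulation point in $\Omega$, so one may take $\rho_k$ less than a third of $\dist(s_k,S\setminus\{s_k\})$ and less than $\dist(s_k,\partial\Omega)/2$. The function $v$ is nonconstant and $p$-harmonic on $B_{\rho_k}(s_k)$, with $\sum_k\pi\rho_k^2<\delta$. Finally, $\Crit(v)\cap B_{\rho_k}(s_k)\subset\{s_k\}$. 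For this last property I use that $F_v$ is quasiregular and nonconstant near $s_k$, so its zeros are discrete. If $\nabla v(s_k)\ne0$, then $v$ is already smooth near $s_k$ and no change is needed there, so set $w=v$ on that disk. Otherwise Theorem~\ref{thm2} applies on $V_k=B_{\rho_k}(s_k)$ with $\Crit=\{s_k\}$; I take Theorem~\ref{thm3}'s version to obtain nondegenerate saddles. This gives a smooth $\widetilde v_k$ on $V_k$ with no local extrema, with $\widetilde v_k-v$ compactly supported in $O_k=B_{\rho_k/2}(s_k)$, and with
\[\|\widetilde v_k-v\|_{C^1(V_k)}+\|\widetilde v_k-v\|_{W^{1,p}(V_k)}<\eta 2^{-k-1}.\]

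Define $w=\widetilde v_k$ on each $V_k$ and $w=v$ elsewhere. Smoothness holds because each difference has compact support inside $V_k$ and $v$ is smooth off $S$. The $C^1$ bound holds because the supports are disjoint, so the supremum is at most $\max_k\eta2^{-k-1}<\eta/2$. The $W^{1,p}$ bound follows from $\|w-v\|_{W^{1,p}}\le\sum_k\|\widetilde v_k-v\|_{W^{1,p}}<\eta/2$, with the series converging in $W^{1,p}(\Omega)$. The set $\{w\ne v\}$ lies in $\bigcup_kO_k$, whose area is less than $\delta$.

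For $w-v\in W^{1,p}_0(\Omega)$, note that each summand $\widetilde v_k-v$ is compactly supported and hence lies in $W^{1,p}_0(\Omega)$. The partial sums converge in norm, so the limit lies in the closed subspace $W^{1,p}_0(\Omega)$.

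The step I expect to be the main obstacle is monotonicity, since Lemma~\ref{lem2} is stated for a single compact modification. I would redo its argument directly. Suppose that for some $U\Subset\Omega$ the maximum of $w$ on $\overline U$ exceeds its maximum on $\partial U$. The set $\overline U$ meets only finitely many disks $V_k$ by local finiteness, so only finitely many modifications matter. Let $K$ be the union of the finitely many compact supports of $\widetilde v_k-v$ that meet $\overline U$, and let $A$ be the union of the corresponding $V_k$. On $A$, $w$ has no local extrema, by Theorem~\ref{thm2}(i) within each $V_k$. I then apply Lemma~\ref{lem2} on a neighborhood of $\overline U$, with $f=v$ monotone, to this $K$ and $A$; the other modifications do not meet $\overline U$ and are irrelevant to the extremum identity there. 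This applies verbatim, since the proof of Lemma~\ref{lem2} only used $w=f$ on $U\setminus K$. The minimum principle follows by the symmetric argument.

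The nondegenerate-saddle clause comes directly from Theorem~\ref{thm3}, and no other new critical points arise because $|\nabla\widetilde v_k|$ is bounded below on the joining annuli.
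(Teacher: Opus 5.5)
Your proposal is correct and follows essentially the same route as the paper: pairwise disjoint, locally finite disks around the points of $S$, Theorem~\ref{thm3} applied in each disk with error budget $\eta2^{-k-1}$, closedness of $W^{1,p}_0(\Omega)$ for the boundary class, and Lemma~\ref{lem2} for monotonicity. The only real difference is how you organize the monotonicity step. The paper applies Lemma~\ref{lem2} to each finite partial sum $w_N$ and then passes to the uniform limit using the second assertion of that lemma. You instead rerun the argument of Lemma~\ref{lem2} on each $U\Subset\Omega$, where local finiteness leaves only finitely many modifications; this is valid because that proof only uses $w=f$ on $\overline U\setminus K$.
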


\begin{proof}
If $S$ is empty, take $w=v$. Otherwise enumerate
$S=\{s_1,s_2,\ldots\}$, with the evident change for a
finite set.  Any point of $S$ at which $\nabla v\ne0$ is already
smooth by elliptic regularity and can be omitted.  By isolation of
planar $p$-harmonic critical points, choose disks $D_i\Subset\Omega$
centered at $s_i$ such that $v$ is nonconstant and $p$-harmonic
in $D_i$ and has no other critical point there. Require explicitly
that the closures $\overline{D_i}$ are pairwise disjoint and form
a locally finite family in $\Omega$. To achieve this, take the
radius of $D_i$ smaller than one third of the distance from $s_i$
to $S\setminus\{s_i\}$ and also smaller than $1/i$, as well as
small enough for the preceding analytic requirements. The distance
condition is vacuous if $S$ is a singleton. If infinitely many such
disks met a fixed compact subset of $\Omega$, the radius bound
would force their centers to accumulate in that compact subset,
contrary to closed discreteness of $S$.
Choose smaller concentric disks $B_i\Subset D_i$ with
$\sum_i|B_i|<\delta$.

Apply Theorem~\ref{thm3} in each $D_i$, with the change supported in
$B_i$ and with error smaller than $\eta2^{-i-1}$. Denote the
compactly supported difference by $h_i$, extended by zero to
$\Omega$. For a finite replacement $w_N=v+\sum_{i=1}^Nh_i$, put
\[
 K_N=\bigcup_{i=1}^N\supp h_i,
 \qquad A_N=\bigcup_{i=1}^ND_i.
\]
Then $K_N\Subset A_N$ and $w_N=v$ outside $K_N$. On each $D_i$
with $i\le N$, disjointness gives $w_N=v+h_i$, exactly the output
of Theorem~\ref{thm3}, so $w_N$ has no local extrema in $A_N$.
Lemma~\ref{lem2} therefore proves that every $w_N$ is monotone.
The series converges uniformly and
in $W^{1,p}$, and its $C^1$ difference from $v$ has the bound in
\eqref{eq26}.  Local finiteness implies that
$w=v+\sum_i h_i$ is smooth at every point of $\Omega$.
The uniform limit assertion in Lemma~\ref{lem2} proves monotonicity.
Each $h_i$ belongs to $W^{1,p}_0(\Omega)$, and this space is closed
in $W^{1,p}$, proving the boundary assertion.  The change is supported
in $\bigcup_i B_i$, whose area is less than $\delta$.
\end{proof}

\subsection{Completion of the planar approximation theorem}

We specify the construction underlying~\cite[Theorem~1.6]{N2020a}.
In its Subsection~4.2.2(a),(c), possible nonsmooth points are critical
points of nonconstant $p$-harmonic pieces; they have no accumulation
in $\Omega$. Subsection~4.3 smooths the regular interfaces in
arbitrarily small, pairwise disjoint, locally finite neighborhoods.
Those neighborhoods can be chosen with locally finite closures
avoiding the closed discrete critical set. Hence the resulting
approximant remains nonconstant and $p$-harmonic near each possible
nonsmooth point. Removing these closed neighborhoods from the open
union of the $p$-harmonic pieces leaves an open set on which the
approximant is $p$-harmonic. The interfaces have area zero, so the
neighborhoods can be chosen to have arbitrarily small total area.

Thus the construction supplies $C^{1,\alpha}_{\loc}$ approximants
$v_j$, smooth off closed discrete sets $S_j$, and open sets $G_j^0$
with $|\Omega\setminus G_j^0|$ arbitrarily small, with precisely
the local $p$-harmonic properties needed in Proposition~\ref{prop7}.
Properties (B)--(E) of the cited theorem give uniform and strong
$W^{1,p}$ convergence, the fixed Sobolev boundary class, and
$\Ep(v_j;\Omega)\le\Ep(u;\Omega)$.

The first part of the proof below uses only the nonincreasing
energy conclusion of the quoted theorem. The strict inequality
for a non-$p$-harmonic input will then follow from
Corollary~\ref{cor1}, rather than from a separate choice of
level strips in the planar construction.

\begin{proof}[Proof of Theorem~\ref{thm4}]
Choose an approximant $v=v_j$ as above so that the sum of the uniform
and Sobolev errors from $u$ is less than $\eta/2$, and choose its
$p$-harmonic open set $G^0$ with
$|\Omega\setminus G^0|<\delta/2$.
Apply Proposition~\ref{prop7} to $v$, taking its error smaller than
$\eta/2$ and its replacement disks to have total area less than
$\delta/2$.  The resulting function $w$ is smooth and monotone and
satisfies \eqref{eq2}.  Its boundary difference is the sum of two
elements of $W^{1,p}_0(\Omega)$.

The map $z\mapsto\Ep(z;\Omega)$ is continuous in the strong
$W^{1,p}$ topology.  Thus the error in Proposition~\ref{prop7} may
also be chosen to ensure
$\Ep(w;\Omega)<\Ep(v;\Omega)+\eta$.
The union of the closed replacement disks is relatively closed in
$\Omega$, by local finiteness.  Remove that union from $G^0$.
The remaining set $G$ is open, $w=v$ there, and
$|\Omega\setminus G|<\delta$. This proves all assertions except
the strict energy improvement.

For that improvement, assume that $u$ is not $p$-harmonic.
Theorem~\ref{thm1} and Corollary~\ref{cor1} give a monotone
$a$ in the same boundary class such that
\[
 \|a-u\|_{L^\infty}+\|a-u\|_{W^{1,p}}<\eta/2,
 \qquad \gamma:=\Ep(u;\Omega)-\Ep(a;\Omega)>0.
\]
Apply the part of the present theorem already proved to $a$,
with the same $\delta$ and an error tolerance
$0<\tau<\min\{\eta/2,\gamma\}$.
The resulting smooth monotone $w$ has total approximation error
less than $\eta$, retains the boundary class and the large
$p$-harmonic open set, and satisfies
\[
 \Ep(w;\Omega)\le\Ep(a;\Omega)+\tau<\Ep(u;\Omega).
\]
This proves the strict assertion without circular use of it.
\end{proof}

\begin{corollary}\label{cor2}
Under the hypotheses of Theorem~\ref{thm4}, smooth monotone functions
with the same Sobolev boundary values approximate $u$ uniformly and
strongly in $W^{1,p}$.  Their $p$-energies converge to $\Ep(u;\Omega)$.
If $u$ itself is nonconstant and $p$-harmonic on a connected $\Omega$,
the approximants can also be Morse functions, with
$\|w_j-u\|_{C^1(\Omega)}\to0$, and their changes can be confined to
open sets whose areas tend to zero.
\end{corollary}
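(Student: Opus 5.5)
The first assertion follows from Theorem~\ref{thm4}. Choose $\eta_j\downarrow0$ and $\delta_j\downarrow0$. Theorem~\ref{thm4} gives smooth monotone $w_j$ with $w_j-u\in W^{1,p}_0(\Omega)$, with $\|w_j-u\|_{L^\infty}+\|w_j-u\|_{W^{1,p}}<\eta_j$, and with $\Ep(w_j)\le\Ep(u)+\eta_j$. Energy convergence is then automatic, since $z\mapsto\Ep(z;\Omega)$ is continuous in the strong $W^{1,p}$ topology. The upper bound \eqref{eq3} is not even needed for this; it only shows that the excess tends to zero.

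For the second assertion, assume $u$ is nonconstant and $p$-harmonic on a connected $\Omega$. Here I would not pass through Ntalampekos's construction. Instead I would apply Proposition~\ref{prop7} directly to $v=u$ with $S=\Crit(u)$.

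First, the hypotheses of Proposition~\ref{prop7} must be checked. The function $u$ is $C^{1,\alpha}_{\loc}$, so it is $C^1$, and it is $W^{1,p}$ by assumption. Monotonicity is the standard strong maximum principle for $p$-harmonic functions, or follows from Lemma~\ref{lem1} together with the comparison principle. By \cite{BI1987,M1988}, the complex gradient $F_u$ is quasiregular or constant.
\begin{itemize}
\item If $F_u$ is a constant $c\neq0$, then $u$ is affine with nonzero gradient, $S$ is empty, and $u$ is already a Morse function. Take $w_j=u$.
\item If $F_u=0$, then $u$ is constant, which is excluded.
\item Otherwise $F_u$ is nonconstant quasiregular on the connected set $\Omega$. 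Its zero set $S$ is then closed and discrete in $\Omega$. Away from $S$, $u$ is smooth by nondegenerate elliptic regularity. Near each $s\in S$, $u$ is nonconstant and $p$-harmonic, since otherwise $\nabla u$ would vanish on an open set, contradicting discreteness.
\end{itemize}

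Proposition~\ref{prop7} with tolerances $\eta_j,\delta_j$ then gives smooth monotone $w_j$ with $\|w_j-u\|_{C^1(\Omega)}+\|w_j-u\|_{W^{1,p}}<\eta_j$ and $w_j-u\in W^{1,p}_0(\Omega)$. The change is confined to open sets of area less than $\delta_j$. Uniform convergence is contained in the $C^1$ norm.

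The remaining point, which I expect to need the most care, is that $w_j$ is Morse. The critical points of $w_j$ fall into two groups.
\begin{itemize}
\item Near $S$, Proposition~\ref{prop7} (via Theorem~\ref{thm3}) produces only nondegenerate saddles inside the replacement disks $B_i$. On the joining annuli the gradient stays bounded away from zero by \eqref{eq22}.
\item Outside $\bigcup_iB_i$ we have $w_j=u$, and $u$ has no critical points there, because in each $D_i$ the only critical point is $s_i$, which lies inside $B_i$.
\end{itemize}
Hence every critical point of $w_j$ is one of the new saddles. I would verify explicitly that the support of the replacement in each $D_i$ contains a neighborhood of $s_i$ on which the saddle construction is active, so that no critical point of $u$ survives unresolved.
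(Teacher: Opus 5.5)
Your proposal is correct and follows the paper's proof: Theorem~\ref{thm4} with $\eta\downarrow0$ gives the first assertion, and Proposition~\ref{prop7} applied directly to $u$ with $S=\Crit(u)$ gives the Morse assertion, taking $w_j=u$ when $S$ is empty. Your case analysis on $F_u$ and your accounting of the critical points of $w_j$ spell out what the paper compresses into one line.
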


\begin{proof}
The first assertion follows by applying Theorem~\ref{thm4} with
$\eta\downarrow0$.  For the second, use Proposition~\ref{prop7}
directly on $u$, with $S=\Crit(u)$.  This set is closed and discrete,
and $u$ is smooth elsewhere.  Each replacement resolves the sole
critical point of its disk into nondegenerate saddles, and there are
no critical points outside these disks.  If $S$ is empty, take
$w_j=u$.
\end{proof}

\section{Nonmanifold levels in every higher dimension}\label{sec9}

We use the following elementary topological criterion. A loop is null-homotopic in a space
if it can be continuously deformed there to a constant loop.
The degree of a map from $\mathbb S^1$ to itself is its winding
number. In particular, a degree-one loop is not null-homotopic;
see Hatcher~\cite[Section~1.1]{H2002}.

\begin{lemma}\label{lem11}
Let $X$ be a subspace of a Euclidean space and let $P\in X$.
Suppose that the loops $\gamma_k:\Sph^1\to X$ converge uniformly
to the constant loop at $P$, and there are continuous maps $\rho_k:X\to\Sph^1$ such that
$\rho_k\circ\gamma_k$ has degree one. Then $P$ has no neighborhood
in $X$ homeomorphic to an open Euclidean ball or to a relatively
open ball in a Euclidean half-space.
\end{lemma}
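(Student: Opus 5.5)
We argue by contradiction: assume $P$ has a neighborhood $N$ in $X$ homeomorphic, via some map $h$, to an open Euclidean ball, or to a relatively open ball $D$ in a Euclidean half-space. Both model sets are convex, and hence contractible through straight-line homotopies. The plan is to show that for large $k$ the loop $\gamma_k$ lies inside $N$ and is null-homotopic there, and therefore null-homotopic in $X$. Composing with $\rho_k$ would then make $\rho_k\circ\gamma_k$ null-homotopic in $\Sph^1$, which contradicts the fact that a degree-one loop is not null-homotopic (Hatcher, as cited).

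The steps are as follows. First, $N$ is a neighborhood of $P$ in $X$, so it contains $X\cap B(P,r)$ for some $r>0$. Uniform convergence of $\gamma_k$ to the constant loop at $P$ then gives $\gamma_k(\Sph^1)\subset N$ for all large $k$. Second, fix such a $k$ and transport the loop to the model set: $h\circ\gamma_k$ is a loop in a convex set $D$. The straight-line homotopy $H_s=(1-s)\,h\circ\gamma_k+s\,h(P)$ stays in $D$ by convexity. Then $h^{-1}\circ H_s$ is a homotopy in $N\subset X$ from $\gamma_k$ to the constant loop at $P$. Third, compose this homotopy with the continuous map $\rho_k$ to obtain a homotopy in $\Sph^1$ from $\rho_k\circ\gamma_k$ to a constant loop. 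A constant loop has degree zero, and degree is a homotopy invariant, so this contradicts degree one.

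Only one point needs care. The half-space case requires the model to be a relatively open ball in a half-space, i.e.\ the intersection of a ball with a half-space. That intersection is still convex, so the same straight-line contraction works, and no separate treatment is needed for $h(P)$ lying on the boundary hyperplane. Beyond this, I expect no real obstacle: the argument uses only convexity of the model and homotopy invariance of degree.
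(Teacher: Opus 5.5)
Your proof is correct and follows the paper's argument. For large $k$ the loops lie in the chart neighborhood, where they contract; you make the contraction explicit by convexity of the model. Composing with $\rho_k$ then gives a null-homotopic degree-one loop in the circle, which is a contradiction.
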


\begin{proof}
If $\gamma_k$ were null-homotopic in $X$, composition with $\rho_k$
would contract a degree-one loop in $\Sph^1$, contradicting
$\pi_1(\Sph^1)\cong\Z$. Now suppose a manifold chart existed at
$P$. After restriction, its domain would be a relatively open,
contractible neighborhood $W$ of $P$ in $X$. Uniform convergence
to the constant loop at $P$ places the entire loop $\gamma_k$ in $W$ for
all sufficiently large $k$. Contracting it in $W$ gives the same
contradiction.
\end{proof}

\subsection{Separated radial perturbations}

Write points of $\R^n$ as $(x,y,z,\xi)$, where
$\xi\in\R^{n-3}$ is absent when $n=3$, and set $q=(y,z)$.
We first construct $w$ on the whole $(y,z)$-plane.

Choose $\psi\in C^\infty([0,\infty))$ with
\begin{equation}\label{eq27}
 \psi(s)=1-s^2\quad(0\le s\le1/2),\qquad
 \psi'(s)<0\quad(0<s<1),\qquad
 \psi(s)=0\quad(s\ge1).
\end{equation}
Such a choice is explicit. Put $\chi_0(t)=e^{-1/t}$ for $t>0$ and
$\chi_0(t)=0$ for $t\le0$, and define
\[
 \vartheta(s)=
 \frac{\chi_0(1-s)}{\chi_0(1-s)+\chi_0(s-1/2)},
 \qquad \psi(s)=(1-s^2)\vartheta(s)\quad(s\ge0).
\]
The denominator never vanishes. On $(1/2,1)$ the function
$\vartheta$ is positive and strictly decreasing. This verifies
\eqref{eq27}; the matching is smooth at both endpoints.
The radial function $q\mapsto\psi(|q|)$ is smooth at the origin
because it equals $1-|q|^2$ near that point. In particular,
$0\le\psi\le1$.

For $k\ge1$ and $j\in\Z$ define
\begin{equation*}
 r_k=2^{-k},\qquad a_{k,j}=8j r_k,\qquad
 c_{k,j}=(a_{k,j},a_{k,j}-56r_k),
\end{equation*}
and set
\begin{equation}\label{eq28}
 b_{k,j}(q)=64r_k\psi\!\left(\frac{|q-c_{k,j}|}{r_k}\right),
 \qquad
 b(q)=\sum_{k\ge1}\sum_{j\in\Z}b_{k,j}(q),
 \qquad w(y,z)=z+b(y,z).
\end{equation}
Finally, put
\begin{equation}\label{eq29}
 U(x,y,z,\xi)=x^2+w(y,z).
\end{equation}

\begin{proposition}\label{prop8}
The closed disks $\overline B(c_{k,j},r_k)$ are pairwise disjoint.
They form a locally finite family in the complement of
$\mathcal D=\{(s,s):s\in\R\}$. The function $b$ is globally Lipschitz,
vanishes on $\mathcal D$, and is smooth on $\R^2\setminus \mathcal D$.
\end{proposition}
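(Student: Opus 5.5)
Every claim in Proposition~\ref{prop8} reduces to elementary distance estimates for the centers $c_{k,j}=(8jr_k,\,8jr_k-56r_k)$ relative to the diagonal $\mathcal D$, combined with the bump bounds $0\le\psi\le1$ and $\psi=0$ on $[1,\infty)$. I will take the steps in the following order.

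\emph{Distance to the diagonal.} The signed quantity $z-y$ equals $-56r_k$ at $c_{k,j}$. Hence $\dist(c_{k,j},\mathcal D)=56r_k/\sqrt2=28\sqrt2\,r_k\approx39.6\,r_k$. Every point $q$ of $\overline B(c_{k,j},r_k)$ therefore satisfies
\[
 z-y\in[-(56+\sqrt2)r_k,\,-(56-\sqrt2)r_k].
\]
This places each closed disk strictly inside $\{y>z\}$, at distance between about $38r_k$ and $41r_k$ from $\mathcal D$.

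\emph{Disjointness.} Disks with the same $k$ and different $j$ have centers at least $8\sqrt2\,r_k>2r_k$ apart, so they are disjoint. For $k<k'$, the scale $r_{k'}\le r_k/2$ means the strip containing the level-$k'$ disks lies in $z-y\ge-(28+1)r_k>-(56-\sqrt2)r_k$. That strip is disjoint from the level-$k$ strip. This argument covers all pairs.

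\emph{Local finiteness.} Fix a compact set $Q\subset\R^2\setminus\mathcal D$ and let $\delta=\dist(Q,\mathcal D)>0$. A disk of level $k$ lies within distance $41r_k$ of $\mathcal D$, so only levels with $41r_k\ge\delta$ can meet $Q$, and there are finitely many of these. For each such level, the centers along the line $z-y=-56r_k$ are spaced $8\sqrt2\,r_k$ apart, so only finitely many meet the bounded set $Q$.

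\emph{Smoothness and vanishing on $\mathcal D$.} Local finiteness shows that near any point off $\mathcal D$ the sum defining $b$ has only finitely many nonzero terms, each smooth. Here the radial profile is smooth because $\psi$ is smooth with $\psi(s)=1-s^2$ near $0$. Since no disk meets $\mathcal D$, every term vanishes there, so $b=0$ on $\mathcal D$. Together with disjointness, this shows $b$ is a well-defined pointwise sum with at most one nonzero term at each point.

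\emph{Lipschitz bound.} This is the only step needing care. The derivative estimate is
\[
 |\nabla b_{k,j}|\le 64\sup|\psi'|=:L.
\]
Because the supports are disjoint, $|\nabla b|\le L$ on $\R^2\setminus\mathcal D$. To go from this to a global bound, take two points $p$ and $p'$.

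If the segment $[p,p']$ avoids $\mathcal D$, integrate along it. If it meets $\mathcal D$, it crosses only at one point or lies inside $\mathcal D$. In the crossing case, let $d$ be the crossing point, use $b(d)=0$, and integrate on each half. Each half has only its endpoint on $\mathcal D$; apply continuity at that endpoint, which holds since $b$ is continuous there. Continuity at $\mathcal D$ follows from $|b(q)|\le64r_k\le 2\dist(q,\mathcal D)$ whenever $q$ lies in a level-$k$ disk. In the remaining case both points lie on $\mathcal D$, where $b=0$. Altogether $b$ is $L$-Lipschitz.
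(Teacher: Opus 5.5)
Your proof is correct. The disjointness, local-finiteness, smoothness and vanishing steps are essentially the paper's. The paper also separates scales using the range $[(56-\sqrt2)r_k,(56+\sqrt2)r_k]$ of $y-z$ on a level-$k$ disk together with $r_\ell\le r_k/2$. Your bound $41r_k\ge\dist(Q,\mathcal D)$ simply makes explicit its remark that only finitely many scales meet a compact set off $\mathcal D$.

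The Lipschitz step is where you genuinely differ. The paper observes that each $b_{k,j}$ is smooth on all of $\R^2$ with $|\nabla b_{k,j}|\le C_b=64\|\psi'\|_\infty$, so each is globally $C_b$-Lipschitz. Since the summands are nonnegative with disjoint supports, $b=\sup_{k,j}b_{k,j}$. A finite-valued supremum of equi-Lipschitz functions is Lipschitz with the same constant. This handles points of $\mathcal D$ without any case analysis and never uses that $\mathcal D$ is a line. It also yields continuity (indeed $|b|\le C_b\dist(\cdot,\mathcal D)$) as a consequence rather than an input.

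Your route integrates $|\nabla b|\le L$ off $\mathcal D$ along segments. That forces you to split a segment at its unique crossing with the line $\mathcal D$, or to treat the case where it lies in $\mathcal D$. You must also prove continuity at $\mathcal D$ separately, via $|b|\le 2\dist(\cdot,\mathcal D)$. All of this is valid, but it depends on the specific geometry of the exceptional set. The supremum argument would work verbatim for any arrangement of disjoint, nonnegative, equi-Lipschitz bumps.
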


\begin{proof}
For a fixed $k$, adjacent centers are separated by
$8\sqrt2\,r_k>2r_k$. The linear coordinate $y-z$ on the disk at
scale $k$ lies in
\[
 [(56-\sqrt2)r_k,(56+\sqrt2)r_k].
\]
If $\ell>k$, then $r_\ell\le r_k/2$, and
\[
 (56+\sqrt2)r_\ell
 \le(28+\sqrt2/2)r_k<(56-\sqrt2)r_k.
\]
This separates disks at different scales and also shows that none
meets $\mathcal D$. On any compact set disjoint from $\mathcal D$ only finitely many
scales occur, and at any one scale only finitely many centers occur.
This proves local finiteness and smoothness away from $\mathcal D$.

Each $b_{k,j}$, extended by zero as in \eqref{eq28}, has Lipschitz
constant at most
\[
 C_b=64\|\psi'\|_\infty.
\]
The supports are disjoint and all summands are nonnegative. Hence
\[
 b(q)=\sup_{k,j}b_{k,j}(q),\qquad 0\le b(q)\le32.
\]
A finite-valued supremum of functions with the same Lipschitz
constant has that Lipschitz constant. Thus
$|b(q)-b(q')|\le C_b|q-q'|$ globally, including at $\mathcal D$.
\end{proof}

\begin{proposition}\label{prop9}
The function $w$ has no local minimum. The function $U$ has no
local maximum and no local minimum. Its restriction to $(-2,2)^n$
is Lipschitz and monotone.
\end{proposition}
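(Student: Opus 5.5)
We prove the three assertions of Proposition~\ref{prop9} in order: first that $w$ has no local minimum, then that $U$ has no local extrema, and finally that $U$ is Lipschitz and monotone on the cube. Throughout we use the structure recorded in Proposition~\ref{prop8}.

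\textbf{Step 1: $w$ has no local minimum.} Take $q_0=(y_0,z_0)$. If $q_0$ lies outside every disk $\overline B(c_{k,j},r_k)$, then $b$ vanishes near $q_0$ off the disks and is nonnegative, so moving down in $z$ lowers $w$. To make this precise, note $w(y_0,z_0-h)=z_0-h+b(y_0,z_0-h)$. This is not automatically smaller, because the vertical segment may enter a disk. A cleaner approach is to use the direction $(-1,-1)$, along which $y-z$ is constant. The disks occupy the strips $y-z\in[(56-\sqrt2)r_k,(56+\sqrt2)r_k]$, and a line $y-z=\mathrm{const}$ meets the disks of one scale at spacing $8\sqrt2 r_k$, so it can meet disks. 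Hence we split into cases.

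\emph{Case A: $q_0$ in no open disk.} Then $b(q_0)=0$. Near $q_0$ the only nearby disks are tangent to or away from $q_0$. We choose a direction $e$ with negative $z$-component that leaves $q_0$ into the complement of the disks. The complement of a finite union of disjoint closed disks near a boundary point contains a cone of directions pointing away from that disk, so such an $e$ exists. At points of $\mathcal D$ infinitely many disks accumulate, but their radii shrink and $b\ge0$ is tiny there. Since $b$ vanishes on $\mathcal D$, we move along $\mathcal D$ itself in the direction $(-1,-1)$, which gives $w=z$ decreasing strictly.

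\emph{Case B: $q_0$ inside a disk $B(c_{k,j},r_k)$.} Here $w=z+64r_k\psi(|q-c|/r_k)$. Its gradient is $e_2+64\psi'(s)\,(q-c)/|q-c|$. Near the center the function is $z+64r_k(1-s^2)$, whose critical point is a strict maximum, not a minimum. Elsewhere, $\psi'(s)<0$ for $0<s<1$, so moving radially outward decreases $b$ strictly; combined with the $z$-term, either the outward or the downward direction strictly decreases $w$. We make this precise by showing $\nabla w\ne0$ except at the single maximum point, which rules out a local minimum.

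\textbf{Step 2: no local extrema of $U$.} $U=x^2+w$ has no local maximum because $x\mapsto x^2$ has none: at any point, varying $x$ away from $0$ raises $U$ strictly, and at $x\neq0$ varying $x$ outward raises it. It has no local minimum because fixing $x$ and using Step 1 for $w$ lowers $U$.

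\textbf{Step 3: Lipschitz and monotone.} $U$ is Lipschitz on the bounded cube since $x^2$ is Lipschitz there and $w$ is Lipschitz by Proposition~\ref{prop8}. Strict monotonicity in the sense of Definition~\ref{def1} on $M=(-2,2)^n$ follows from the absence of local extrema, via the equivalence stated after Definition~\ref{def1}.

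The main obstacle is Step 1 near $\mathcal D$ and at points where the boundaries of disks accumulate. The plan there is to exploit $b=0$ on $\mathcal D$ together with $b\ge0$, moving along $\mathcal D$ in the direction $(-1,-1)$. For points off $\mathcal D$ that lie in no disk, local finiteness from Proposition~\ref{prop8} reduces the problem to finitely many disks.
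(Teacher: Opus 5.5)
Your Steps~2 and~3, and your treatment of points of $\mathcal D$ by moving along the diagonal where $w=z$, are correct and agree with the paper. Step~1 off the diagonal, however, has a genuine gap.

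In Case~B, the statement you plan to prove, that $\nabla w\ne0$ in $B(c_{k,j},r_k)$ except at the maximum, is false. Write $c=c_{k,j}$, $r=r_k$, $a=a_{k,j}$ and $g(s)=s+64\psi(s)$. Then $w(c+(0,rs))=a-56r+r\,g(s)$ and $\nabla w(c+(0,rs))=(0,g'(s))$ for $0<s<1$. Since $g'(1/2)=-63<0<1=g'(1)$, $g$ attains its minimum on $[1/2,1]$ at an interior point $s_m\in(1/2,1)$. So $q_m=c+(0,rs_m)$ is a second critical point. At $q_m$, neither the radially outward (here upward) nor the downward direction decreases $w$, so your ``outward or downward'' dichotomy breaks down exactly there. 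These points are saddles, not minima, but the direction that shows this is horizontal. Keep $z$ fixed and move $y$ so as to increase $|y-a|$ (either way if $y=a$). This strictly increases $|q-c|$ while staying in the disk, so the bump strictly decreases because $\psi'<0$ on $(0,1)$, and $z$ is unchanged. That single observation is the paper's argument. It handles every point of the open disk, including the center and all critical points, without any critical-point analysis.

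In Case~A, your claim fails at the top point $q_0=c+(0,r)$ of every disk. There is no straight direction with negative $z$-component leaving $q_0$ into the complement of the disks: the outward normal there is $(0,1)$, so every such direction enters the disk. The missing ingredient is the flatness of $\psi$ at $s=1$. Since $\psi$ is smooth and vanishes on $[1,\infty)$, one has $\psi'(1)=0$. Together with local finiteness away from $\mathcal D$, this gives $\nabla w=(0,1)$ at every point off $\mathcal D$ outside the open disks, boundary circles included. Moving downward then decreases $w$ to first order, so no such point is a local minimum. Alternatively, the points $q_0+(t,-t^2/(4r))$ lie outside the closed disk and have $w=z<w(q_0)$. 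With these two repairs your outline coincides with the paper's proof.
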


\begin{proof}
Suppose first that $q=(y,z)\in B(c_{k,j},r_k)$. Keep $z$ fixed
and change $y$ by an arbitrarily small amount in a direction that
increases $|y-a_{k,j}|$. If $y=a_{k,j}$, either direction works.
The distance to $c_{k,j}$ strictly increases while the point stays
inside the same disk. By \eqref{eq27}, the bump value strictly
decreases. No other bump contributes, so $w$ strictly decreases.

At a point outside all the open disks and outside $\mathcal D$, local
finiteness and the flatness of $\psi$ at $1$ give
$\nabla w=(0,1)$, including at the boundary of a disk. Such a
point is not a local minimum. At a point $(s,s)\in \mathcal D$ one has
$w(s,s)=s$ and $w(s-h,s-h)=s-h$ for every $h>0$. This excludes
local minima on $\mathcal D$ as well.

At every point, changing $x$ so as to increase $|x|$ increases
$U$. Thus $U$ has no local maximum. If $x\ne0$, moving $x$
towards zero decreases $U$. If $x=0$, a change in $q$ that
decreases $w$ decreases $U$. Thus $U$ also has no local minimum.
The strictness criterion following Definition~\ref{def1} proves monotonicity. Proposition~\ref{prop8}
and boundedness of $x$ prove the Lipschitz assertion on the cube.
\end{proof}

\subsection{A noncontractible circle at every level}

Fix $t\in\R$. At scale $k$ choose $j=j(t,k)\in\Z$ so that
\begin{equation}\label{eq30}
 |a_{k,j}-t|\le4r_k.
\end{equation}
The lattice spacing $8r_k$ makes this possible. In the next
calculation abbreviate $r=r_k$, $a=a_{k,j}$, and $c=c_{k,j}$.
On $\overline B(c,r/2)$, equations \eqref{eq27}--\eqref{eq28}
give the exact formula
\begin{align}
 w(q)
 &=a+8r+(z-c_z)-\frac{64}{r}|q-c|^2 \notag\\
 &=m-\frac{64}{r}|q-d|^2,
 \qquad
 d=c+(0,r/128),\qquad
 m=a+(8+1/256)r. \label{eq31}
\end{align}
The shift from $c$ to $d$ is necessary: adding the background
coordinate $z$ moves the maximum of the quadratic bump.

\begin{proposition}\label{prop10}
For each $t\in\R$ and each $k\ge1$ there is a disk
$D_{t,k}=B(d_{t,k},R_{t,k})$ such that
\begin{equation}\label{eq32}
 w=t\ \hbox{on }\partial D_{t,k},\qquad
 w>t\ \hbox{in }D_{t,k},\qquad
 \sup_{q\in\overline D_{t,k}}|q-(t,t)|<64r_k.
\end{equation}
In particular, $w(d_{t,k})>t$.
\end{proposition}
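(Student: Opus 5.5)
The plan is to take $D_{t,k}$ to be a superlevel disk of the exact quadratic formula \eqref{eq31}. Fix $t$ and $k$, choose $j=j(t,k)$ by \eqref{eq30}, and abbreviate $r,a,c,d,m$ as in \eqref{eq31}.

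On $\overline B(c,r/2)$ we have $w(q)=m-\frac{64}{r}|q-d|^2$. This suggests defining
\[
 d_{t,k}=d,\qquad R_{t,k}=\sqrt{\tfrac{r}{64}(m-t)},
\]
provided $m>t$. Then $w=t$ on the circle $|q-d|=R_{t,k}$ and $w>t$ inside it, as long as this closed disk lies in $\overline B(c,r/2)$, where the formula is valid.

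The key numerical step is to bound $m-t$. From \eqref{eq30},
\[
 m-t=(a-t)+(8+1/256)r\in\bigl[(4+1/256)r,\,(12+1/256)r\bigr].
\]
Hence $m>t$, and
\[
 R_{t,k}\le\sqrt{\tfrac{r^2}{64}(12+1/256)}<\tfrac{\sqrt{13}}{8}r<\tfrac{r}{2}\cdot0.91.
\]
Moreover $|d-c|=r/128$. So $R_{t,k}+|d-c|<r/2$, and $\overline D_{t,k}\subset B(c,r/2)$. Within that set \eqref{eq31} holds exactly, which gives the first two assertions of \eqref{eq32} and also $w(d_{t,k})=m>t$.

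For the localization estimate, we have $c=(a,a-56r)$ and $|a-t|\le4r$. Therefore
\[
 |c-(t,t)|\le|(a-t,a-t)|+56r\le4\sqrt2\,r+56r<62r.
\]
Any $q\in\overline D_{t,k}$ lies within $r/2$ of $c$, so $|q-(t,t)|<62.5r<64r$. Taking the supremum over the compact disk keeps the strict inequality.

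The only delicate point is checking that the disk stays inside the region where \eqref{eq31} is exact, including the $r/128$ shift of the center. The estimate $\sqrt{12.004}/8\approx0.433$, together with $1/128\approx0.008$, leaves room below $1/2$, so this check goes through.
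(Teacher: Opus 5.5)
Your proposal is correct and is essentially the paper's proof. It uses the same center $d$, the same radius $R=(\tfrac r{64}(m-t))^{1/2}$, the same two-sided bound on $m-t$ from \eqref{eq30}, the same containment $|d-c|+R<r/2$ so that \eqref{eq31} holds on the closed disk, and the same localization bound $|c-(t,t)|\le(4\sqrt2+56)r$ plus $r/2$. The only difference is cosmetic: the paper certifies the containment through the exact bound $\frac{1+\sqrt{3073}}{128}r<\frac r2$ (using $3073<63^2$), while you use $\sqrt{12+1/256}<\sqrt{13}$ and decimal estimates, which is equally valid.
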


\begin{proof}
Use $d$ and $m$ from \eqref{eq31} and define
\[
 R=\left(\frac r{64}(m-t)\right)^{1/2}.
\]
By \eqref{eq30},
\[
 (4+1/256)r\le m-t\le(12+1/256)r,
\]
so $R>0$. Moreover,
\[
 |d-c|+R\le\frac{1+\sqrt{3073}}{128}\,r<\frac r2,
\]
because $3073<63^2$. Therefore the entire closed disk
$\overline B(d,R)$ is contained in the region where \eqref{eq31}
holds. That formula proves both level inequalities in \eqref{eq32}.
Finally,
\[
 |c-(t,t)|\le\sqrt2\,|a-t|+56r
 \le(4\sqrt2+56)r,
\]
and every point of the disk is within $r/2$ of $c$. Since
$4\sqrt2+56+1/2<64$, the last assertion follows.
\end{proof}

\begin{proof}[Proof of the level-set assertion in Theorem~\ref{thm5}]
Use the function in \eqref{eq29}, restricted to
$\Omega=(-2,2)^n$. Its analytic properties and monotonicity have
been proved in Proposition~\ref{prop9}. Fix $t\in(-1,1)$, write
$A_t=U^{-1}(t)\cap\Omega$, and let $P_t=(0,t,t,0,\ldots,0)$.
Since $b(t,t)=0$, one has $P_t\in A_t$.

For all sufficiently large $k$, Proposition~\ref{prop10} places
the following circle inside $\Omega$:
\begin{equation}\label{eq33}
 \gamma_{t,k}(e^{i\theta})=
 \bigl(0,d_{t,k}+R_{t,k}(\cos\theta,\sin\theta),0\bigr).
\end{equation}
It lies in $A_t$ and its image is contained in $B(P_t,64r_k)$.
Define on the entire level set
\begin{equation}\label{eq34}
 \rho_{t,k}:A_t\longrightarrow\Sph^1,
 \qquad
 \rho_{t,k}(x,q,\xi)=\frac{q-d_{t,k}}{|q-d_{t,k}|}.
\end{equation}
This is well-defined: if $q=d_{t,k}$, then
\[
 U(x,d_{t,k},\xi)=x^2+w(d_{t,k})>t,
\]
so such a point cannot belong to $A_t$. On the circle in
\eqref{eq33}, the composition $\rho_{t,k}\circ\gamma_{t,k}$
is exactly $(\cos\theta,\sin\theta)$ and has degree one.
Lemma~\ref{lem11} now excludes a manifold chart at $P_t$.

The same map \eqref{eq34} works in every dimension $n\ge3$;
extra coordinates cannot make these circles contractible. This
argument does not rely on an assertion that arbitrary products
preserve nonmanifold points. Finally, a Lipschitz function on the
bounded cube belongs to $W^{1,p}(\Omega)$ for every
$1\le p\le\infty$.
\end{proof}

\begin{remark}\label{rem8}
The proof excludes contraction even by a homotopy that crosses
between the two sets $\{x>0\}$ and $\{x<0\}$ or passes through
arbitrarily many other circles. Such a homotopy would still be
sent by \eqref{eq34} to a contraction of a degree-one circle.
No retraction of a local double, computation of genus, or
description of the entire set $\{w\le t\}$ is needed.
\end{remark}

\begin{remark}\label{rem9}
The failure is topological, not a failure of the Lipschitz coarea
formula. In fact, with the usual normalization of Hausdorff
measure, that formula gives
\[
 \int_{\R}\Hm^{n-1}(U^{-1}(t)\cap\Omega)\dd t
 =\int_\Omega|\nabla U|\dd X<\infty;
\]
see Federer~\cite[Section~3.2]{F1969} or
Evans and Gariepy~\cite[Chapter~3]{EG2015}. Thus almost every
level has finite $(n-1)$-dimensional measure, although every
level in $(-1,1)$ has the nonmanifold point specified above.
\end{remark}

\subsection{Smooth monotone approximation of the example}

\begin{proof}[Proof of the approximation assertion in Theorem~\ref{thm5}]
Truncate scales, rather than individual circles:
\[
 b_N(q)=\sum_{k=1}^N\sum_{j\in\Z}b_{k,j}(q),\qquad
 w_N(y,z)=z+b_N(y,z),\qquad
 U_N(x,y,z,\xi)=x^2+w_N(y,z).
\]
There are only finitely many scales, and the sum over $j$ at
each scale is locally finite. Hence $b_N$, $w_N$, and $U_N$
are smooth on their whole Euclidean spaces.

The proof of Proposition~\ref{prop9} applies inside every
remaining disk. Outside their interiors $\nabla w_N=(0,1)$.
Consequently $w_N$ has no local minimum, and $U_N$ has no
local maximum or minimum. The strictness criterion following Definition~\ref{def1} proves monotonicity.
At any point the omitted sum has at most one nonzero term, so
\begin{equation}\label{eq35}
 \|U-U_N\|_{L^\infty(\Omega)}\le64r_{N+1}.
\end{equation}

Let $E_N\subset(-2,2)^2$ be the union of the omitted open disks,
intersected with this square. A disk at scale $k$ that meets the
square has $a_{k,j}\in[-2-r_k,2+r_k]$. Since the spacing is
$8r_k$ and $r_k\le1/2$, there are at most $2/r_k$ such disks.
Therefore
\begin{equation*}
 |E_N|\le 2\pi\sum_{k>N}r_k
 =4\pi r_{N+1}.
\end{equation*}
Almost everywhere off $E_N$ the gradients of $b$ and $b_N$
agree. To see this directly, exclude the diagonal $\mathcal D$, a
two-dimensional null set, and the countable union of disk
boundaries; local finiteness then applies. On $E_N$ their
gradient difference has magnitude at most $C_b$ from
Proposition~\ref{prop8}. The remaining coordinates contribute
a factor $4^{n-2}$ to volume, whence
\begin{equation}\label{eq36}
 \|\nabla U-\nabla U_N\|_{L^p(\Omega)}^p
 \le C_b^p4^{n-2}\,4\pi r_{N+1}
 \qquad(1\le p<\infty).
\end{equation}
Equations \eqref{eq35} and \eqref{eq36} prove the result.
\end{proof}

\begin{remark}\label{rem10}
The maps in \eqref{eq34} apply to the limiting level set. They
do not imply stability of topology under uniform or strong
Sobolev convergence. Each finite truncation removes the
accumulation of circles responsible for the obstruction at
$P_t$. No claim of strong $W^{1,\infty}$ convergence is made.
\end{remark}

\section{Singular minimizers and higher-dimensional exceptional sets}\label{sec10}

Proposition~\ref{prop1} shows that prescribed Sobolev boundary
values and a nonincreasing energy bound fix every $p$-harmonic
input. We now make the resulting regularity obstruction explicit.
The same homogeneous planar function illustrates both the positive
energy cost of planar smoothing and the failure of a discrete
exceptional set after extension to higher dimensions.

\subsection{An explicit homogeneous 7-harmonic function}

The following is a specialization of the classical separated
variable construction of Kr\'ol and Aronsson
\cite{A1986,K1973}. We include the calculation so that neither
the exponent nor the regularity failure has to be inferred
from a general existence statement.

\begin{lemma}\label{lem12}
There exists a $\pi$-periodic smooth function $f:\R\to\R$
with $f(0)=1$ such that
\[
 h(r\cos\theta,r\sin\theta)=r^{3/2}f(\theta),\qquad h(0)=0,
\]
is $7$-harmonic on $\R^2$, belongs to
$C^{1,1/2}_{\loc}(\R^2)$, is smooth off the origin, and is
not $C^2$ at the origin.
\end{lemma}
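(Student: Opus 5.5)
We will obtain $h$ by the separation-of-variables reduction of Kr\'ol and Aronsson, specialised to $p=7$ and homogeneity $\lambda=3/2$. Substituting $h=r^\lambda f(\theta)$ into $\Delta_ph=0$ off the origin uses $|\nabla h|^2=r^{2\lambda-2}(\lambda^2f^2+f'^2)$. After cancelling the common power of $r$, this gives a second-order autonomous ODE for $f$:
\[
 \bigl((p-1)f'^2+\lambda^2f^2\bigr)f''+\lambda\bigl((2-p)\lambda ... \bigr)\cdots=0 .
\]
We will derive it carefully rather than quote it. Its leading coefficient is positive wherever $(f,f')\ne(0,0)$, so the equation is nondegenerate there.

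Because the equation is invariant under $f\mapsto cf$, we reduce the order with the Riccati substitution $\omega=f'/f$ on intervals where $f\ne0$. This yields a first-order autonomous equation $\omega'=-R(\omega)$ with $R$ rational and positive. Hence $\omega$ decreases monotonically from $+\infty$ to $-\infty$ between consecutive zeros of $f$. The angular distance between zeros is then $\Theta(\lambda,p)=\int_{\R}d\omega/R(\omega)$. We seek a solution with four sign changes per turn, like $xy$, whose angular period is $\pi$. The condition is $\Theta(\lambda,p)=\pi/2$. We will evaluate this integral explicitly, by partial fractions or residues, and check that it holds for $p=7$, $\lambda=3/2$. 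This calculation is the main obstacle: obtaining the exact form of $R$ and an integral that closes in elementary form. As a sanity check on the algebra we will compare with the limits $p=2$, $\lambda=2$ and $p\to\infty$, $\lambda\to4/3$. Given the zero spacing, we build $f$ by gluing the solution from zero to zero, normalise $f(0)=1$, and use the ODE symmetry $\theta\mapsto-\theta$ to get $\pi$-periodicity and smoothness. Smoothness also follows from ODE uniqueness through the nondegenerate points.

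The remaining assertions are then short. Since $f$ is smooth and $(f,f')$ never vanishes simultaneously (otherwise $f\equiv0$ by uniqueness), $\nabla h\ne0$ off the origin. The function $h$ is smooth there and solves the equation classically. For weak $7$-harmonicity across the origin, we test with $\varphi(1-\chi(x/\epsilon))$. The flux $|\nabla h|^5\nabla h$ has size $O(r^{3})$, so the error term is $O(\epsilon^{3}\cdot\epsilon^{-1}\cdot\epsilon^2)\to0$.

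For regularity, $\nabla h$ is homogeneous of degree $1/2$ with smooth angular part, which gives $C^{1,1/2}_{\loc}$ near $0$ and smoothness elsewhere. Finally, suppose $h$ were $C^2$ at $0$. The second derivatives are homogeneous of degree $-1/2$, so boundedness near the origin would force them to vanish identically. Then $\nabla h$ would be constant, hence zero since $\nabla h(0)=0$, and so $h\equiv0$. This contradicts $f(0)=1$.
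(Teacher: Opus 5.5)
Your plan is correct and is essentially the paper's argument. The paper's parameter $s=-\tfrac23 f'/f$ is your Riccati variable rescaled by $-\lambda^{-1}$, and its explicit $\Theta(s)=\arctan s-\tfrac12\arctan(3s/2)$, with $\Theta(\infty)=\pi/4$, is exactly the antiderivative of $1/R$ you propose to compute. Indeed $R(\omega)=\frac{(6\omega^2+6)(\omega^2+9/4)}{6\omega^2+9/4}$ and $\int_{\R}d\omega/R=\pi/2$, so your deferred calculation does close. The reflection construction, the weak equation at the origin and the $C^{1,1/2}$ estimate then proceed as in the paper; the only difference is that you rule out $C^2$ via the $(-1/2)$-homogeneity of $D^2h$, whereas the paper uses the second difference of $h(s,0)=|s|^{3/2}$, which shows slightly more: $h$ is not even twice differentiable at $0$.
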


\begin{proof}
For $s\ge0$ define
\begin{equation*}
 \Theta(s)=\arctan s-\tfrac12\arctan(3s/2),
 \qquad
 F(s)=\left(\frac{4+9s^2}{4(1+s^2)^3}\right)^{1/4}.
\end{equation*}
Direct differentiation gives
\begin{equation*}
 \Theta'(s)=\frac{1+6s^2}{(1+s^2)(4+9s^2)}>0,
 \qquad
 \frac{F'(s)}{F(s)}=-\tfrac32s\Theta'(s).
\end{equation*}
Moreover, $\Theta(0)=0$ and $\Theta(s)\to\pi/4$ as
$s\to\infty$. Thus $f(\Theta(s))=F(s)$ defines a smooth
function on $[0,\pi/4)$. It satisfies $f(0)=1$, $f'(0)=0$,
and
\[
 f'=-\tfrac32sf,\qquad
 \frac{ds}{d\theta}
 =\frac{(1+s^2)(4+9s^2)}{1+6s^2}.
\]
Substitution, or direct differentiation, now yields
\begin{equation}\label{eq37}
 (3f^2+8(f')^2)f''+23f(f')^2+18f^3=0.
\end{equation}
As $\theta\uparrow\pi/4$,
\begin{equation}\label{eq38}
 f(\theta)\longrightarrow0,
 \qquad
 f'(\theta)\longrightarrow-(3/2)^{3/2}.
\end{equation}
Indeed, $sF(s)\to\sqrt{3/2}$.

The equation \eqref{eq37}, solved for $f''$, is a smooth
ordinary differential equation whenever $(f,f')\ne(0,0)$.
The limits \eqref{eq38} therefore give a smooth continuation
across $\pi/4$. The equation is invariant under reflection
of the independent variable and under changing the sign of
$f$. Uniqueness for the initial value problem implies that
this continuation is obtained by odd reflection about
$\pi/4$:
\[
 f(\theta)=-f(\pi/2-\theta)
 \quad(\pi/4\le\theta\le\pi/2).
\]
Reflect evenly about $0$ and impose
$f(\theta+\pi/2)=-f(\theta)$ thereafter. At the reflection
points the same ordinary differential equation and uniqueness
give smooth matching. This produces a smooth $\pi$-periodic
function. The pair $(f,f')$ never vanishes simultaneously.

Put $G=\tfrac94 f^2+(f')^2$. Since $G>0$, equation
\eqref{eq37} is equivalent to
\begin{equation}\label{eq39}
 (G^{5/2}f')'+6G^{5/2}f=0.
\end{equation}
For a polar function $r^\lambda f(\theta)$, the equation
$\operatorname{div}(|\nabla h|^{p-2}\nabla h)=0$ away
from the origin reduces to
\[
 \left((\lambda^2f^2+(f')^2)^{(p-2)/2}f'\right)'
 +\lambda\bigl(1+(\lambda-1)(p-1)\bigr)
 (\lambda^2f^2+(f')^2)^{(p-2)/2}f=0.
\]
For $p=7$ and $\lambda=3/2$, the coefficient in the second
term is $6$. Thus \eqref{eq39} proves the classical equation
off the origin.

The bounds $|h(q)|\le C|q|^{3/2}$ and
$|\nabla h(q)|\le C|q|^{1/2}$ give differentiability at
the origin with derivative zero, continuity of the gradient,
and local $W^{1,7}$ membership. The gradient has the form
$r^{1/2}A(\theta)$ with $A$ smooth and periodic. To check
its $1/2$-H\"older continuity, take $|q|\ge|q'|$. If
$|q-q'|\ge|q|/2$, use the bound on the gradient itself.
Otherwise the segment from $q$ to $q'$ stays at distance
at least $|q|/2$ from zero, and the homogeneous Hessian
bound $|D^2h|\le C|q|^{-1/2}$ gives
\[
 |\nabla h(q)-\nabla h(q')|
 \le C|q|^{-1/2}|q-q'|
 \le C|q-q'|^{1/2}.
\]
This proves $C^{1,1/2}_{\loc}$ regularity.

To verify the weak equation at zero, integrate the classical
equation on the complement of a disk of radius $\varepsilon$
against a compactly supported smooth test function. The
absolute value of the new boundary term is bounded by
\[
 C\varepsilon\,
 \sup_{|q|=\varepsilon}|\nabla h(q)|^6
 \le C\varepsilon^4\longrightarrow0.
\]
The flux is locally integrable, so passage to the limit proves
\eqref{eq4} on the whole plane.

Finally, $f(0)=f(\pi)=1$ gives $h(s,0)=|s|^{3/2}$.
Its second symmetric difference at zero, divided by $s^2$,
is $2|s|^{-1/2}$, which is unbounded. Hence $h$ is not
$C^2$ at zero.
\end{proof}

\subsection{Product extension and the energy obstruction}

\begin{proof}[Proof of Theorem~\ref{thm6}]
Let $h$ be the function in Lemma~\ref{lem12} and define
\[
 H(q,\zeta)=h(q),\qquad
 (q,\zeta)\in(-1,1)^2\times(-1,1)^{n-2}.
\]
Its $7$-harmonicity in the sense of Definition~\ref{def2}
follows by testing in the $q$
variables for each fixed $\zeta$ and applying Fubini's
theorem. The regularity and the finite $7$-energy on the
cube follow from Lemma~\ref{lem12}. Restriction to a
two-dimensional slice through $(0,0,\zeta)$ shows that
$H$ is not $C^2$ at any point of
\[
 S=\{(0,0)\}\times(-1,1)^{n-2}.
\]
Off $S$ it is smooth.

For completeness, the monotonicity can also be read directly
from the weak equation. Given $V\Subset(-1,1)^n$, put
$m=\max_{\partial V}H$. For $\varepsilon>0$, the function
$(H-m-\varepsilon)_+$ on $V$, extended by zero outside,
has compact support in $V$ and is an admissible Sobolev
test function. Testing shows its gradient has zero
$L^7$ norm, so it vanishes by Poincar\'e's inequality.
Letting $\varepsilon\downarrow0$ gives $H\le m$ on $V$.
Testing the corresponding lower truncation proves the
minimum principle.

By Proposition~\ref{prop1}, any competitor $v$ satisfying the two
variational conditions in the statement equals $H$ almost everywhere.
If a representative of $v$ is smooth on an open neighborhood, it equals the
continuous function $H$ everywhere there: two continuous
functions equal almost everywhere on an open set are
equal everywhere. Thus the representative cannot be
smooth near any point of $S$. If it were smooth near every point outside
a discrete exceptional set, that set would have to
contain $S$. This is impossible when $n\ge3$, since
no point of $S$ is isolated in $S$.
\end{proof}

\begin{remark}\label{rem11}
The obstruction comes from the conjunction of prescribed boundary
values and the one-sided energy bound. Replacing that bound by
convergence of energies does not invoke the rigidity lemma.
There is no contradiction with the planar theorem: when $n=2$,
the exceptional set of the homogeneous function is the single
point $\{0\}$.
Conversely, every approximant supplied by Theorem~\ref{thm1}
for a $p$-harmonic input must equal the input itself. This is
compatible with the local $C^{1,\alpha}$ regularity of
$p$-harmonic functions and shows why that regularity is the
appropriate general conclusion when both variational conditions
are retained.
\end{remark}

\begin{corollary}\label{cor3}
Let $h$ be the function of Lemma~\ref{lem12}, restricted to
$D=(-1,1)^2$. There are smooth strictly monotone Morse functions
$h_j$ with $h_j-h\in W^{1,7}_0(D)$ such that
\begin{gather*}
 \|h_j-h\|_{C^1(D)}+\|h_j-h\|_{W^{1,7}(D)}\longrightarrow0,\\
 E_7(h_j;D)>E_7(h;D),\qquad E_7(h_j;D)\longrightarrow E_7(h;D).
\end{gather*}
All their critical points are saddles, and the changes may be
confined to arbitrarily small neighborhoods of the origin.
\end{corollary}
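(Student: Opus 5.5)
The plan is to apply Theorem~\ref{thm3} at the single critical point of $h$, taking a shrinking sequence of neighborhoods and error tolerances, and then to use Proposition~\ref{prop1} for the strict energy inequality. The first step is to check the hypotheses. By Lemma~\ref{lem12}, $h$ is $7$-harmonic on $D$ and smooth off the origin. We need $\Crit(h)=\{0\}$. Write $|\nabla h|^2=r\bigl(\tfrac94f^2+(f')^2\bigr)=rG$ in polar coordinates. Since the pair $(f,f')$ never vanishes simultaneously, $G>0$, and so $\nabla h\ne0$ off the origin. On the other hand, $\nabla h(0)=0$ by the $C^{1,1/2}$ bound. Hence the hypotheses of Theorem~\ref{thm2} hold with $V=D$ and $x_0=0$.

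Next, fix $\eta_j\downarrow0$ and neighborhoods $O_j=B(0,1/j)\cap D$. Theorem~\ref{thm3} gives $h_j\in C^\infty(D)$ with the following properties: $h_j-h$ is compactly supported in $O_j$; $\|h_j-h\|_{C^1(D)}+\|h_j-h\|_{W^{1,7}(D)}<\eta_j$; and the critical points of $h_j$ are exactly $m$ nondegenerate saddles, where $-m$ is the index. Consequently $h_j$ is Morse. It has no local extrema, so it is strictly monotone. All its critical points are saddles, and the changes lie in $O_j$, whose area tends to zero. Compact support gives $h_j-h\in W^{1,7}_0(D)$ as in Theorem~\ref{thm2}. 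The index need not be computed explicitly, although one can check that it equals $-2$: $\nabla h$ is homogeneous and $h$ has four nodal rays, since $f$ has zeros at $\pi/4+k\pi/2$.

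The energy assertions follow from this. Convergence $E_7(h_j;D)\to E_7(h;D)$ holds because $E_7$ is continuous in the strong $W^{1,7}$ topology, and $h$ has finite energy on $D$ by Lemma~\ref{lem12}. For the strict inequality, Proposition~\ref{prop1} applies on the bounded set $D$ with $u=h\in C(D)\cap W^{1,7}(D)$ and $w=h_j$. It gives $E_7(h_j)\ge E_7(h)$, with equality only if $h_j=h$ almost everywhere, and hence everywhere by continuity. Equality is impossible because $h_j$ is smooth while $h$ is not $C^2$ at $0$.

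The only mildly delicate points are verifying $\Crit(h)=\{0\}$ and confirming that $\|\cdot\|_{C^1(D)}$ is finite, which holds because the difference has compact support. Beyond that, the argument is a direct assembly of Theorem~\ref{thm3}, Proposition~\ref{prop1} and Lemma~\ref{lem12}.
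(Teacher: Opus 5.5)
Your proof is correct and follows the paper's argument step by step. Positivity of $\tfrac94 f^2+(f')^2$ gives $\Crit(h)=\{0\}$, and you apply Theorem~\ref{thm3} with shrinking tolerances and neighborhoods. Proposition~\ref{prop1}, together with the fact that $h$ is not $C^2$ at the origin, gives the strict energy inequality, and strong $W^{1,7}$ convergence gives convergence of the energies.

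There is one slip, in the aside you already marked as unnecessary. Four nodal rays, the sign pattern of $x^2-y^2$, give gradient index $-1$, not $-2$. The vector $(\tfrac32 f,f')$ winds $-2$ times around the origin, and the rotating polar frame contributes $+1$. Hence $m=1$, and each $h_j$ has exactly one saddle.
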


\begin{proof}
The proof of Lemma~\ref{lem12} shows that
$\tfrac94 f^2+(f')^2>0$. Hence $\nabla h\ne0$ away from the
origin, which is its sole critical point. Apply
Theorem~\ref{thm3} with shrinking error tolerances and
neighborhoods. The strict energy inequality follows from
Proposition~\ref{prop1}, since $h$ is not smooth and thus no
smooth $h_j$ equals it. Strong Sobolev convergence gives
convergence of the energies.
\end{proof}

\section{Variational consequences in a fixed boundary class}\label{sec11}

Let $\Omega\subset\R^n$ be bounded and open, $1<p<\infty$,
and $g\in W^{1,p}(\Omega)$. Define
\begin{equation*}
 \mathcal M_g^p=\{v\in C(\Omega)\cap W^{1,p}(\Omega):
 v\text{ is monotone},\ v-g\in W^{1,p}_0(\Omega)\}.
\end{equation*}
We assume this class is nonempty. For
$\alpha=\alpha(n,p)$ from Theorem~\ref{thm1}, set
\[
 \mathcal M_{g,\alpha}^p=\mathcal M_g^p\cap C^{1,\alpha}_{\loc}(\Omega),
 \qquad
 \mathcal M_{g,\infty}^p=\mathcal M_g^p\cap C^\infty(\Omega).
\]
The affine boundary class is closed, whereas the monotonicity
constraint is not convex in general. No smoothness on
$\overline\Omega$ is imposed on either subclass.

\subsection{Density and continuous integral functionals}

\begin{corollary}\label{cor4}
For every $u\in\mathcal M_g^p$ and every $\eta>0$, there is
$v\in\mathcal M_{g,\alpha}^p$ such that
\[
 \sup_\Omega|v-u|+\|v-u\|_{W^{1,p}(\Omega)}<\eta,
 \qquad \Ep(v;\Omega)\le\Ep(u;\Omega).
\]
If $n=2$, there is also such an approximation by
$v\in\mathcal M_{g,\infty}^p$, with energy bound
$\Ep(v;\Omega)\le\Ep(u;\Omega)+\eta$.
For a non-$p$-harmonic input, strict energy decrease is possible
in both assertions. Consequently
\[
 \inf_{\mathcal M_g^p}\Ep=\inf_{\mathcal M_{g,\alpha}^p}\Ep,
 \qquad
 \inf_{\mathcal M_g^p}\Ep=\inf_{\mathcal M_{g,\infty}^p}\Ep
 \quad\text{if }n=2.
\]
\end{corollary}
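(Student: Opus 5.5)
The plan is to read Corollary~\ref{cor4} directly off Theorems~\ref{thm1} and~\ref{thm4}, applied with $u\in\mathcal M_g^p$ as input, and then to deduce the two identities for the infima from the approximation statements.

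\emph{First assertion.} Fix $u\in\mathcal M_g^p$ and $\eta>0$. By definition $u$ is continuous, monotone and lies in $W^{1,p}(\Omega)$, so Theorem~\ref{thm1} applies. It produces monotone $u_j\in C^{1,\alpha}_{\loc}(\Omega)$ with $u_j-u\in W^{1,p}_0(\Omega)$, uniform and strong $W^{1,p}$ convergence, and $\Ep(u_j)\le\Ep(u)$. Since $W^{1,p}_0(\Omega)$ is a linear space, the identity $u_j-g=(u_j-u)+(u-g)$ shows $u_j-g\in W^{1,p}_0(\Omega)$. Hence $u_j\in\mathcal M_{g,\alpha}^p$, and a large index $j$ gives the displayed error bound below $\eta$. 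If $u$ is not $p$-harmonic, Corollary~\ref{cor1} makes the energy inequality strict for every $j$.

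\emph{Planar assertion.} When $n=2$, apply Theorem~\ref{thm4} with the given $\eta$ and an arbitrary $\delta$. By \eqref{eq2} and \eqref{eq3} it yields a smooth monotone $w\in W^{1,p}(\Omega)$ with uniform-plus-Sobolev error below $\eta$ and $w-u\in W^{1,p}_0(\Omega)$, hence $w-g\in W^{1,p}_0(\Omega)$ as before, together with $\Ep(w)\le\Ep(u)+\eta$. So $w\in\mathcal M_{g,\infty}^p$. The strict decrease for non-$p$-harmonic $u$ is the final clause of Theorem~\ref{thm4}.

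\emph{Infima.} The inclusions $\mathcal M_{g,\infty}^p\subset\mathcal M_{g,\alpha}^p\subset\mathcal M_g^p$ give
\[
 \inf_{\mathcal M_g^p}\Ep\le\inf_{\mathcal M_{g,\alpha}^p}\Ep
 \le\inf_{\mathcal M_{g,\infty}^p}\Ep .
\]
For the reverse inequalities, take any $u\in\mathcal M_g^p$. The first assertion supplies $v\in\mathcal M_{g,\alpha}^p$ with $\Ep(v)\le\Ep(u)$, so $\inf_{\mathcal M_{g,\alpha}^p}\Ep\le\Ep(u)$; taking the infimum over $u$ gives equality. In the plane, the second assertion gives $\inf_{\mathcal M_{g,\infty}^p}\Ep\le\Ep(u)+\eta$ for every $\eta>0$, and the same infimum argument gives equality.

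No step is a genuine obstacle. The only point needing attention is the bookkeeping that the boundary class is inherited through $W^{1,p}_0$, which holds because the earlier theorems preserve $u+W^{1,p}_0(\Omega)$ exactly. This requires no regularity of $\partial\Omega$ and no boundedness of $u$.
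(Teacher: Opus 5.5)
Your proposal is correct and follows the paper's own argument. Like the paper, you read the approximations off Theorems~\ref{thm1} and~\ref{thm4}, take strictness from Corollary~\ref{cor1} and the final clause of Theorem~\ref{thm4}, transfer the boundary class via $v-g=(v-u)+(u-g)\in W^{1,p}_0(\Omega)$, and get equality of the infima from the subclass inclusions together with approximation of each admissible input (you use the bounds $\Ep(v)\le\Ep(u)$ and $\Ep(v)\le\Ep(u)+\eta$ directly where the paper cites convergence of energies).
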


\begin{proof}
Use Theorem~\ref{thm1} and, in the plane,
Theorem~\ref{thm4}. In either case
$v-g=(v-u)+(u-g)\in W^{1,p}_0(\Omega)$.
Strong Sobolev convergence gives convergence of the energies.
Approximation of each admissible input and inclusion of the
regular subclasses prove equality of the infima.
\end{proof}

A Carath\'eodory integrand is measurable in its spatial variable
and continuous in its remaining variables for almost every
spatial point~\cite{D2008}. The same approximation argument
applies to any such integrand with a two-sided $p$-growth bound.

\begin{theorem}\label{thm7}
Suppose that $F:\Omega\times\R\times\R^n\to\R$ is a
Carath\'eodory integrand and that
\begin{equation*}
 |F(x,s,\xi)|\le a(x)+C(|s|^p+|\xi|^p),
 \qquad 0\le a\in L^1(\Omega),
\end{equation*}
for almost every $x$ and all $(s,\xi)$. Set
$\mathcal J(v)=\int_\Omega F(x,v,\nabla v)\dd x$.
For every $u\in\mathcal M_g^p$ the approximants from
Theorem~\ref{thm1} satisfy $\mathcal J(u_j)\to\mathcal J(u)$,
and
\begin{equation*}
 \inf_{\mathcal M_g^p}\mathcal J
 =\inf_{\mathcal M_{g,\alpha}^p}\mathcal J.
\end{equation*}
If $n=2$, the smooth approximants from Theorem~\ref{thm4}
also satisfy this convergence, and
\[
 \inf_{\mathcal M_g^p}\mathcal J
 =\inf_{\mathcal M_{g,\infty}^p}\mathcal J.
\]
\end{theorem}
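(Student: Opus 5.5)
The plan is to show that $\mathcal J$ is continuous along the approximating sequences and then compare infima; no further use of monotonicity is needed beyond membership in the classes. The core tool is the continuity of Nemytskii operators: a Carath\'eodory integrand with the two-sided growth bound above defines a functional $\mathcal J$ that is continuous on $W^{1,p}(\Omega)$ with the strong topology \cite{D2008}.

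I would include the argument for completeness. Let $v_k\to v$ in $W^{1,p}(\Omega)$. Given any subsequence, pass to a further subsequence along which $v_k\to v$ and $\nabla v_k\to\nabla v$ almost everywhere. Along it we may also assume there is a dominating pair $G\in L^p(\Omega)$ with $|v_k|+|\nabla v_k|\le G$; this is the partial converse of dominated convergence, obtained from a rapidly convergent subsequence via $G=|v|+|\nabla v|+\sum_k(|v_{k}-v|+|\nabla v_{k}-\nabla v|)$. Carath\'eodory continuity gives $F(x,v_k,\nabla v_k)\to F(x,v,\nabla v)$ for almost every $x$. The growth bound supplies the integrable majorant $a+2^{p}C\,G^p$, so dominated convergence yields $\mathcal J(v_k)\to\mathcal J(v)$. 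Since every subsequence has a further subsequence with this limit, the whole sequence converges. Note that $\mathcal J$ is finite on $W^{1,p}(\Omega)$ by the same bound, because $|\Omega|<\infty$ is not even needed here ($a\in L^1$).

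Applying this with $v_k=u_j$ from Theorem~\ref{thm1} gives $\mathcal J(u_j)\to\mathcal J(u)$, using (C). For the infima, the inclusion $\mathcal M_{g,\alpha}^p\subset\mathcal M_g^p$ gives $\inf_{\mathcal M_g^p}\mathcal J\le\inf_{\mathcal M_{g,\alpha}^p}\mathcal J$. For the reverse inequality, fix $u\in\mathcal M_g^p$. Each $u_j$ is continuous, monotone and $C^{1,\alpha}_{\loc}$ by (A). It also lies in the boundary class, since $u_j-g=(u_j-u)+(u-g)\in W^{1,p}_0(\Omega)$ by (E), exactly as in Corollary~\ref{cor4}. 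Hence $\inf_{\mathcal M_{g,\alpha}^p}\mathcal J\le\mathcal J(u_j)\to\mathcal J(u)$, and taking the infimum over $u$ closes the argument.

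In the planar case I would use Theorem~\ref{thm4} with $\eta=1/j$ to produce smooth monotone $w_j$ with $w_j-u\in W^{1,p}_0(\Omega)$ and $w_j\to u$ in $W^{1,p}$. The same continuity and inclusion argument then gives both the convergence $\mathcal J(w_j)\to\mathcal J(u)$ and the equality of infima over $\mathcal M_{g,\infty}^p$. The only step with real content is the continuity lemma, specifically the subsequence-with-majorant device. The rest is bookkeeping of the class memberships. No infimum needs to be finite, since the inequalities hold in $[-\infty,\infty)$.
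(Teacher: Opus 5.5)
Your proposal is correct and follows the paper's proof. Both arguments use the same steps: extract subsequences with almost-everywhere convergence of values and gradients, invoke the Carath\'eodory property, apply a convergence theorem to the integrands and the subsequence criterion, and finish with class inclusion and the bookkeeping $u_j-g=(u_j-u)+(u-g)$. The only difference is cosmetic: you build an $L^p$ majorant from a rapidly convergent subsequence and use dominated convergence, whereas the paper shows $|u_j|^p+|\nabla u_j|^p$ is uniformly integrable and applies Vitali's theorem, and either device works here.
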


\begin{proof}
Both approximation theorems preserve the boundary class and give
strong $L^p$ convergence of $(u_j,\nabla u_j)$.
Consequently $|u_j|^p+|\nabla u_j|^p$ is uniformly integrable.
Indeed, the inequality
\[
 \bigl||z|^p-|y|^p\bigr|
 \le C_p(|z|+|y|)^{p-1}|z-y|
\]
and H\"older's inequality give $L^1$ convergence of the
corresponding $p$th powers. The growth bound then makes
$F(x,u_j,\nabla u_j)$ uniformly integrable.

Every subsequence has a further subsequence on which both values
and gradients converge almost everywhere. The Carath\'eodory
property gives almost-everywhere convergence of the integrands,
and Vitali's theorem gives convergence in $L^1$ along that
subsequence~\cite{B2011,D2008}. The subsequence criterion
proves convergence for the full sequence. Approximation of every
member of $\mathcal M_g^p$ and inclusion of the regular subclasses
prove the equalities, also when the common infimum is $-\infty$.
\end{proof}

\begin{corollary}\label{cor5}
Under the hypotheses of Theorem~\ref{thm7}, every minimizing
sequence in $\mathcal M_g^p$ can be replaced by a minimizing
sequence in $\mathcal M_{g,\alpha}^p$ whose difference from
the original sequence tends to zero uniformly and in $W^{1,p}$.
If $n=2$, the replacement sequence can belong to
$\mathcal M_{g,\infty}^p$.
\end{corollary}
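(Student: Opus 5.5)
Let $(v_k)\subset\mathcal M_g^p$ be a minimizing sequence for $\mathcal J$, so $\mathcal J(v_k)\to\inf_{\mathcal M_g^p}\mathcal J$, where the infimum may be $-\infty$. The plan is to replace each $v_k$ by a nearby regular approximant. To each $v_k$ we apply Theorem~\ref{thm1}, or Theorem~\ref{thm4} when $n=2$, and obtain a sequence $(u_{k,j})_j$ with three properties. The sequence belongs to $\mathcal M_{g,\alpha}^p$, or to $\mathcal M_{g,\infty}^p$ in the plane. It converges to $v_k$ uniformly and strongly in $W^{1,p}(\Omega)$. Finally, $u_{k,j}-v_k\in W^{1,p}_0(\Omega)$.

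Membership in the boundary class follows exactly as in Corollary~\ref{cor4}. We write
\[
u_{k,j}-g=(u_{k,j}-v_k)+(v_k-g)\in W^{1,p}_0(\Omega).
\]
Monotonicity and regularity are provided by the cited theorems. By Theorem~\ref{thm7}, applied to the fixed input $v_k$, we have $\mathcal J(u_{k,j})\to\mathcal J(v_k)$ as $j\to\infty$.

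The key step is a diagonal choice. For each $k$, choose $j=j(k)$ so large that three conditions hold:
\[
\sup_\Omega|u_{k,j}-v_k|<1/k,\qquad \|u_{k,j}-v_k\|_{W^{1,p}(\Omega)}<1/k,\qquad |\mathcal J(u_{k,j})-\mathcal J(v_k)|<1/k.
\]
Set $w_k=u_{k,j(k)}$. Then $w_k-v_k\to0$ uniformly and in $W^{1,p}$.

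It remains to show that $(w_k)$ is minimizing. If the infimum is finite, then $\mathcal J(w_k)\le\mathcal J(v_k)+1/k$, which tends to the infimum. Since $w_k$ lies in the smaller class $\mathcal M_{g,\alpha}^p\subset\mathcal M_g^p$, we also have $\mathcal J(w_k)\ge\inf_{\mathcal M_g^p}\mathcal J$. If the infimum is $-\infty$, the same upper bound gives $\mathcal J(w_k)\to-\infty$. Theorem~\ref{thm7} identifies this infimum with the infimum over the regular subclass, so in either case $(w_k)$ is minimizing there as well.

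The only delicate point is that the convergence $\mathcal J(u_{k,j})\to\mathcal J(v_k)$ is not uniform in $k$. The diagonal argument avoids any uniformity requirement, because each $j(k)$ is chosen separately and Theorem~\ref{thm7} applies to each input individually. Beyond this point, no obstacle is expected.
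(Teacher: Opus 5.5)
Your proposal is correct and follows the paper's proof. The paper likewise approximates the $k$th member of the minimizing sequence via Theorem~\ref{thm1} (or Theorem~\ref{thm4} when $n=2$), using Theorem~\ref{thm7} to choose the approximant separately for each $k$ so that the uniform, Sobolev, and functional errors are all below $1/k$. Your explicit checks of the boundary class and of the case $\inf\mathcal J=-\infty$ only spell out what the paper leaves implicit.
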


\begin{proof}
Approximate the $j$th member so that the sum of the uniform,
Sobolev, and absolute functional errors is less than $1/j$.
Theorem~\ref{thm7} permits this choice independently for
every $j$.
\end{proof}

Thus there is no Lavrentiev gap between the specified monotone
classes for the indicated functionals. This does not assert
existence of a smooth minimizer. For example, let $h$ be the
homogeneous $7$-harmonic function of Lemma~\ref{lem12} on
$D=(-1,1)^2$ and use it as the boundary datum. Then
\begin{equation*}
 \inf_{\mathcal M_{h,\infty}^{7}}E_7
 =\min_{\mathcal M_h^{7}}E_7=E_7(h;D),
\end{equation*}
but the smooth infimum is not attained, by
Proposition~\ref{prop1}. Corollary~\ref{cor3} gives
explicitly the type of smooth recovery sequence needed here.

The growth assumptions cover bounded measurable weights multiplying
$|\nabla v|^p$ and continuous lower-order terms of the stated
growth. They exclude singular Jacobian dependence, higher growth,
and additional constraints not stable under strong Sobolev
convergence. Lavrentiev phenomena in more general variational
problems, such as those of Ball and Mizel~\cite{BM1985}, are
not ruled out by this result.

\subsection{Maps with monotone coordinates}

\begin{corollary}\label{cor6}
Suppose $F=(f^1,\ldots,f^m)\in C(\Omega;\mathbb R^m)
\cap W^{1,p}(\Omega;\mathbb R^m)$ and each $f^i$ is monotone.
There are maps $F_j=(f_j^1,\ldots,f_j^m)$ of class
$C^{1,\alpha}_{\loc}$ with monotone coordinates such that
\[
 F_j\to F\quad\text{uniformly and strongly in }W^{1,p},
 \qquad F_j-F\in W^{1,p}_0(\Omega;\mathbb R^m),
\]
and
\[
 \int_\Omega|\nabla f_j^i|^p\dd x
 \le\int_\Omega|\nabla f^i|^p\dd x
 \qquad(1\le i\le m).
\]
For $p=2$, $\int_\Omega|DF_j|^2\le\int_\Omega|DF|^2$, where
$|DF|$ is the Frobenius norm.
If $n=2$, there is also a sequence of smooth maps with monotone
coordinates and the same convergence and boundary properties;
for this sequence each coordinate energy converges to that of
the original coordinate. A nonincreasing bound holds coordinate
by coordinate whenever each $p$-harmonic coordinate is already
smooth.
\end{corollary}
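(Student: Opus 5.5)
The plan is to apply the scalar results coordinate by coordinate and then combine them with diagonal choices. For each $i$, the coordinate $f^i\in C(\Omega)\cap W^{1,p}(\Omega)$ is monotone. Theorem~\ref{thm1} therefore gives monotone $f^i_j\in C^{1,\alpha}_{\loc}(\Omega)$, with $\alpha=\alpha(n,p)$ independent of $i$ and $j$, satisfying (B)--(E). Setting $F_j=(f^1_j,\ldots,f^m_j)$ then gives the first display at once. Uniform convergence, strong $W^{1,p}$ convergence and membership of $F_j-F$ in $W^{1,p}_0(\Omega;\R^m)$ are all coordinatewise statements, and each holds by (B), (C), (E). The coordinate energy inequality is (D) for each $i$.

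For $p=2$ the Frobenius energy splits as $\int_\Omega|DF_j|^2=\sum_i\int_\Omega|\nabla f^i_j|^2$. Summing the coordinate bounds gives the claimed inequality. No cross terms arise, which is why only $p=2$ is asserted: for $p\neq2$ the quantity $|DF|^p$ is not additive over rows.

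In the plane I would apply Theorem~\ref{thm4} to each coordinate with tolerance $\eta=1/j$ and any fixed $\delta$. This yields smooth monotone $w^i_j$ with
\[
\|w^i_j-f^i\|_{L^\infty}+\|w^i_j-f^i\|_{W^{1,p}}<1/j,\qquad w^i_j-f^i\in W^{1,p}_0(\Omega).
\]
The energy convergence of each coordinate then follows from strong $W^{1,p}$ convergence, as in Corollary~\ref{cor4}.

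For the final sentence, consider a coordinate with $n=2$. If $f^i$ is not $p$-harmonic, the strict clause of Theorem~\ref{thm4} gives $\Ep(w^i_j)<\Ep(f^i)$. If $f^i$ is $p$-harmonic and already smooth, I take $w^i_j=f^i$, as allowed by Remark~\ref{rem1}. In both cases the bound is nonincreasing coordinate by coordinate. Proposition~\ref{prop1} shows this hypothesis is necessary, since a nonsmooth $p$-harmonic coordinate cannot be matched by a smooth function without an energy increase.

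The argument is routine bookkeeping. The only point needing care is that the smooth and $C^{1,\alpha}$ cases use different theorems, so the energy statement differs between them. The claim must also be read as preserving monotonicity of each coordinate only, not injectivity of the map, in line with the remark after Theorem~\ref{thm4}.
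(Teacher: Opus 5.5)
Your proposal is correct and follows essentially the same route as the paper. You apply Theorem~\ref{thm1} coordinatewise and sum the squared coordinate gradients when $p=2$. In the plane you apply Theorem~\ref{thm4} to each coordinate, using its strict clause for non-$p$-harmonic coordinates and leaving already smooth $p$-harmonic coordinates unchanged.
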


\begin{proof}
Apply Theorem~\ref{thm1} to the finitely many coordinates and
choose the same sequence of error tolerances. Summing the squared
coordinate gradients gives the Frobenius-energy inequality.
For $n=2$, apply Theorem~\ref{thm4} to each coordinate instead.
Its strict assertion treats every non-$p$-harmonic coordinate;
leave an already smooth $p$-harmonic coordinate unchanged.
This gives the final assertion.
\end{proof}

Every coordinate of a homeomorphism between Euclidean domains
is Lebesgue monotone: the image of an interior neighborhood is
open, and a coordinate projection takes both larger and smaller
values there. Thus Corollary~\ref{cor6} applies to Sobolev
homeomorphisms. It retains their scalar maximum and minimum
principles but does not retain injectivity, orientation, or a
prescribed image. This is the precise relation with the
homeomorphic approximation problem discussed in~\cite{N2020a}
and with the mapping results~\cite{CDV2026,HP2018,IKO2011,IKO2012,IO2016}.

\section{Coarea, perimeter convergence, and level-set topology}\label{sec12}

In this section $\Omega\subset\R^n$ is bounded and open and
$1<p<\infty$. The arguments apply to the sequences in
Theorems~\ref{thm1} and~\ref{thm4}, and also to the explicit
sequence in Theorem~\ref{thm5}. In fact, the analytic conclusions
only need uniform and strong $W^{1,p}$ convergence.

For a measurable set $E\subset\Omega$, its relative perimeter is
\[
 P(E;\Omega)=|D\chi_E|(\Omega),
\]
where $D\chi_E$ denotes the distributional gradient and $|D\chi_E|$
its total variation measure. A function belongs to $BV(\Omega)$
when it is integrable and its distributional gradient is a finite
vector-valued Radon measure. Strict convergence in $BV$ means
convergence in $L^1$ together with convergence of the total
variations. We use these conventions and the coarea theorem
from~\cite{AFP2000}. In particular, for $v\in W^{1,1}(\Omega)$,
\begin{equation*}
 \int_{\mathbb R}P(\{v>t\};\Omega)\dd t
 =\int_\Omega|\nabla v|\dd x.
\end{equation*}

\begin{corollary}\label{cor7}
Let $u_j,u\in C(\Omega)\cap W^{1,p}(\Omega)$ with $u_j\to u$
uniformly and strongly in $W^{1,p}$, and put
$E_t=\{u>t\}$ and $E_{j,t}=\{u_j>t\}$. Then
\begin{align}
 &\int_{\mathbb R}
   \|\chi_{E_{j,t}}-\chi_{E_t}\|_{L^1(\Omega)}\dd t
   =\|u_j-u\|_{L^1(\Omega)}\longrightarrow0,\label{eq40}\\
 &\int_{\mathbb R}
   |P(E_{j,t};\Omega)-P(E_t;\Omega)|\dd t
   \longrightarrow0.\label{eq41}
\end{align}
There is a subsequence along which
$\chi_{E_{j,t}}\to\chi_{E_t}$ strictly in $BV(\Omega)$ for
almost every $t$. These assertions hold with $\Omega$ replaced by
any fixed open subset of $\Omega$.
\end{corollary}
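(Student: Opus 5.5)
For \eqref{eq40} I would use the layer-cake identity. For each $x$, the function $t\mapsto|\chi_{\{u_j>t\}}(x)-\chi_{\{u>t\}}(x)|$ is the indicator of the interval between $u(x)$ and $u_j(x)$ (half-open, so the endpoint convention is harmless). Its $t$-integral is therefore $|u_j(x)-u(x)|$. Tonelli's theorem then gives the identity in \eqref{eq40}. The right side tends to zero because $u_j\to u$ in $L^p$ and $|\Omega|<\infty$; uniform convergence alone would also suffice.

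For \eqref{eq41} I would compare the two coarea integrals. By the coarea formula, $t\mapsto P(E_{j,t};\Omega)$ and $t\mapsto P(E_t;\Omega)$ are integrable, with integrals $\|\nabla u_j\|_{L^1}$ and $\|\nabla u\|_{L^1}$. Since $u_j\to u$ in $W^{1,p}$ and $\Omega$ is bounded, $\nabla u_j\to\nabla u$ in $L^1$, so these integrals converge. The idea is to use a Brezis--Lieb/Scheff\'e-type argument on the nonnegative functions $\phi_j(t)=P(E_{j,t};\Omega)$ and $\phi(t)=P(E_t;\Omega)$. By \eqref{eq40}, after passing to a subsequence, $\chi_{E_{j,t}}\to\chi_{E_t}$ in $L^1(\Omega)$ for a.e.\ $t$. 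Lower semicontinuity of perimeter then gives $\phi(t)\le\liminf_j\phi_j(t)$ for a.e.\ $t$. Write $|\phi_j-\phi|=(\phi_j-\phi)+2(\phi-\phi_j)_+$. The first term integrates to $\|\nabla u_j\|_1-\|\nabla u\|_1\to0$. The second satisfies $0\le(\phi-\phi_j)_+\le\phi\in L^1$, and $\limsup_j(\phi-\phi_j)_+=0$ a.e.\ by lower semicontinuity. Reverse Fatou with the dominating function $\phi$ (or dominated convergence applied to $\limsup$) then gives $\limsup_j\int(\phi-\phi_j)_+\,dt=0$. This proves \eqref{eq41} along every subsequence of the subsequence. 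Since every subsequence of the full sequence contains such a further subsequence, the full sequence converges.

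The strict $BV$ assertion then follows. From \eqref{eq40} and \eqref{eq41}, the function $t\mapsto\|\chi_{E_{j,t}}-\chi_{E_t}\|_{L^1}+|P(E_{j,t};\Omega)-P(E_t;\Omega)|$ tends to zero in $L^1(\mathbb R)$. A subsequence therefore tends to zero for a.e.\ $t$, which is exactly strict convergence for a.e.\ $t$; since $\phi(t)<\infty$ for a.e.\ $t$, the sets have finite perimeter there. For an open $\Omega'\subset\Omega$, the same hypotheses hold on $\Omega'$ (uniform and $W^{1,p}$ convergence restrict), so the argument repeats verbatim.

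The main obstacle is the perimeter step. Perimeter is only lower semicontinuous, so the upper control has to come from the coarea identity together with the $L^1$ convergence of gradients. The delicate point is ensuring the dominated/Scheff\'e argument is applied to the positive part $(\phi-\phi_j)_+$, which is dominated by the fixed integrable function $\phi$. The negative part is controlled by the difference of total integrals, not by domination.
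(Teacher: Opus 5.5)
Your proposal is correct and follows essentially the same route as the paper's proof. You use the layer-cake identity with Tonelli for \eqref{eq40}; then coarea plus $L^1$ convergence of gradients, almost-everywhere lower semicontinuity of perimeter along a subsequence, the decomposition $|a_j-a|=(a_j-a)+2(a-a_j)_+$ dominated by $a$, and the subsequence criterion for \eqref{eq41}; and, as the paper does, an almost-everywhere convergent subsequence for the strict $BV$ assertion.
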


\begin{proof}
For real numbers $a,b$,
\[
 \int_{\mathbb R}
 |\chi_{\{a>t\}}-\chi_{\{b>t\}}|\dd t=|a-b|.
\]
Tonelli's theorem proves~\eqref{eq40}. Since $|\Omega|<\infty$,
strong $W^{1,p}$ convergence implies strong $W^{1,1}$ convergence.
Writing $a_j(t)=P(E_{j,t};\Omega)$ and $a(t)=P(E_t;\Omega)$,
the coarea formula gives
\begin{equation}\label{eq42}
 \int_{\mathbb R}a_j(t)\dd t
 \longrightarrow\int_{\mathbb R}a(t)\dd t<\infty.
\end{equation}

Take any subsequence. By~\eqref{eq40}, a further subsequence has
$\chi_{E_{j,t}}\to\chi_{E_t}$ in $L^1(\Omega)$ for almost every
$t$. Lower semicontinuity of perimeter gives
$a(t)\le\liminf_j a_j(t)$ at such levels. Consequently
$(a-a_j)_+\to0$ almost everywhere and
$0\le(a-a_j)_+\le a\in L^1(\mathbb R)$. Dominated convergence,
\eqref{eq42}, and the identity
\[
 \int|a_j-a|=\int a_j-\int a+2\int(a-a_j)_+
\]
give $\|a_j-a\|_{L^1(\mathbb R)}\to0$ on the further subsequence.
The subsequence criterion proves~\eqref{eq41} for the full sequence.
Finally choose a subsequence for which the sum of the integrals
in~\eqref{eq40} and~\eqref{eq41} is summable. Both integrands then
converge to zero for almost every $t$, which is precisely the
asserted strict $BV$ convergence. The proof on a fixed open subset
is identical.
\end{proof}

Perimeter is measured on the reduced boundary and does not control
the topology of the entire level set. Corollary~\ref{cor7}
therefore remains meaningful for the example in
Theorem~\ref{thm5}. It does not assert convergence of the
distributional gradients in total variation norm, which is stronger
than strict $BV$ convergence.

\begin{corollary}\label{cor8}
For each $n\ge3$, there are smooth monotone $U_N$ and a Lipschitz
monotone $U$ on $\Omega=(-2,2)^n$, converging uniformly and strongly in
every $W^{1,p}$ with $1\le p<\infty$, such that for almost every
$t\in(-1,1)$ all of the sets $U_N^{-1}(t)\cap\Omega$ are smooth embedded
hypersurfaces whereas $U^{-1}(t)\cap\Omega$ is not a topological manifold.
After taking a subsequence, their superlevel characteristic
functions also converge strictly in $BV$ for almost every such $t$.
\end{corollary}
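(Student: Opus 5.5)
We take $U$ and $U_N$ from Theorem~\ref{thm5}, that is, \eqref{eq29} and its scale truncations. Uniform and strong $W^{1,p}$ convergence for every $1\le p<\infty$ are \eqref{eq35} and \eqref{eq36}. Monotonicity and smoothness of $U_N$ were proved there, and Lipschitz monotonicity of $U$ is Proposition~\ref{prop9}. The failure of the manifold property for $U^{-1}(t)\cap\Omega$ at $P_t$, for every $t\in(-1,1)$, is the level-set part of Theorem~\ref{thm5}. For the last assertion, uniform and strong $W^{1,p}$ convergence on the bounded cube is all Corollary~\ref{cor7} requires, so it gives a subsequence along which the superlevel characteristic functions converge strictly in $BV(\Omega)$ for almost every $t$. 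Since we only need this for almost every $t$, it suffices to intersect full-measure sets of levels.

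The substantive step is to show that for almost every $t$, every $U_N^{-1}(t)\cap\Omega$ is a smooth embedded hypersurface. We use Sard's theorem for each smooth $U_N$ restricted to the open cube, since the critical values of $U_N|_\Omega$ form a null set $Z_N$. For $t\notin\bigcup_N Z_N$, a countable union of null sets, $t$ is a regular value of every $U_N$ on $\Omega$. The implicit function theorem then makes $U_N^{-1}(t)\cap\Omega$ a smooth embedded hypersurface, closed in $\Omega$ (possibly empty).

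We can also describe the critical set explicitly, which gives more than almost every level. The gradient is $\nabla U_N=(2x,\nabla w_N,0)$, so it vanishes only where $x=0$ and $\nabla w_N=0$. Outside the closed disks $\nabla w_N=(0,1)$. Inside a disk, the proof of Proposition~\ref{prop9} shows that the $y$-derivative vanishes only on $y=a_{k,j}$. On that line $\partial_zw_N=1+64\,\partial_z\psi(|q-c|/r)$, which vanishes at a single point, the apex $d_{k,j}$ from \eqref{eq31}; the strict monotonicity $\psi'<0$ on $(0,1)$ reduces this to a one-variable check. The critical values are therefore the countably many maxima $m_{k,j}$, and only the finitely many scales $k\le N$ occur. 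Either route suffices.

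The main obstacle is only bookkeeping. We must make sure the exceptional null set is chosen independently of $N$, which is why we take the countable union. Then we take the intersection with the full-measure set of levels supplied by the subsequence in Corollary~\ref{cor7}. No new analysis is needed beyond Sard's theorem.
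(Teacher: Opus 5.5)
Your main argument is the paper's proof, and it is complete: Sard's theorem for each smooth $U_N$, a countable union of null sets of critical values, the regular value theorem, Theorem~\ref{thm5} for the nonmanifold point $P_t$ of the limiting level, and then Corollary~\ref{cor7} with an intersection of full-measure sets of levels.

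The optional explicit description of the critical set is false, and you should remove or correct it. Take the diameter $y=a_{k,j}$ of a disk of scale $k\le N$, and put $\zeta=z-(c_{k,j})_z\in(0,r_k)$. There $\partial_z w_N=1+64\,\psi'(\zeta/r_k)$, which vanishes exactly when $\psi'(\zeta/r_k)=-1/64$.

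\begin{itemize}
\item On $(0,1/2]$ this happens only at $\zeta/r_k=1/128$, which is the apex $d_{k,j}$.
\item However, $\psi'(1/2)=-1$ and $\psi'(1)=0$, since $\psi$ is smooth and vanishes identically on $[1,\infty)$. By the intermediate value theorem, $\psi'(s^*)=-1/64$ for some $s^*\in(1/2,1)$.
\item Hence $(a_{k,j},(c_{k,j})_z+s^*r_k)$ is a further critical point of $w_N$. It gives a critical point of $U_N$ at $x=0$.
\item Since $\psi(s^*)<\psi(1/2)=3/4$, its value is smaller than $a_{k,j}-56r_k+r_k+48r_k=a_{k,j}-7r_k<m_{k,j}$.
\end{itemize}

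Strict decrease of $\psi$ does not make $\psi'$ monotone, so the one-variable check you describe fails. The critical values are therefore not only the maxima $m_{k,j}$. They still form a null set, but that is Sard's theorem again. Your proof stands through the Sard route, but drop the claim that the only zero is the apex and that the critical values are just the $m_{k,j}$.
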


\begin{proof}
Use the explicit sequence in Theorem~\ref{thm5}. Sard's
theorem~\cite{S1942} gives a null set of critical values for each
$U_N$. Outside the countable union of those null sets, the regular
value theorem~\cite{M1965} makes every $U_N^{-1}(t)\cap\Omega$ a smooth
embedded hypersurface. The limiting level is nonmanifold for
every $t\in(-1,1)$ by Theorem~\ref{thm5}.
Apply Corollary~\ref{cor7} to this same sequence and intersect
the two full-measure sets of levels to obtain the strict $BV$
assertion. Thus even that stronger measure-theoretic convergence
does not force a manifold structure on the limiting level.
\end{proof}

\begin{corollary}\label{cor9}
Under the hypotheses of Corollary~\ref{cor7}, suppose also
that all $u_j$ are smooth. Outside a single null set of levels,
each $t$ is a regular value of every $u_j$, and
\[
 P(\{u_j>t\};\Omega)=\Hm^{n-1}(\{u_j=t\}\cap\Omega).
\]
These identities hold simultaneously with the almost-everywhere
strict $BV$ convergence along the subsequence of that corollary.
\end{corollary}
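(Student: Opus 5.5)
The plan has three steps: build one null set of bad levels from Sard's theorem, identify perimeter with Hausdorff measure at regular values, and intersect with the full-measure set of levels from Corollary~\ref{cor7}.

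\emph{Step 1: the exceptional null set.} Each $u_j$ is smooth on the open set $\Omega$, so Sard's theorem~\cite{S1942} applies to $u_j:\Omega\to\R$. It shows that the set $N_j$ of critical values of $u_j$ is a Lebesgue null set. Put $N=\bigcup_jN_j$. As a countable union of null sets, $N$ is null. For $t\notin N$, the level $t$ is a regular value of every $u_j$ at once. By the regular value theorem~\cite{M1965}, each $\{u_j=t\}\cap\Omega$ is then a smooth embedded hypersurface. It is closed in $\Omega$ and is the topological boundary of $\{u_j>t\}$ relative to $\Omega$.

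\emph{Step 2: perimeter equals Hausdorff measure.} Fix $t\notin N$ and $j$, and put $E=\{u_j>t\}$. The main point is to identify $|D\chi_E|(\Omega)$ with $\Hm^{n-1}(\{u_j=t\}\cap\Omega)$ without assuming that either side is finite. I would work locally. Near a point of the level set, $\nabla u_j\ne0$, and the implicit function theorem gives a chart in which $E$ is the region on one side of a smooth graph. For $\varphi\in C_c^1(W;\R^n)$ supported in such a chart domain $W$, the divergence theorem gives
\[
 \int_E\diver\varphi\dd x
 =-\int_{\{u_j=t\}}\varphi\cdot\nu\dd\Hm^{n-1},
 \qquad \nu=\frac{\nabla u_j}{|\nabla u_j|}.
\]
Here $\nu$ is the unit normal pointing into $E$, so the outward normal of $E$ is $-\nu$. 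This identity shows that
\[
 D\chi_E=\nu\,\Hm^{n-1}\llcorner\{u_j=t\}
\]
on $W$. Away from the level set, $\chi_E$ is locally constant, because $u_j-t$ has constant sign on each component of the complement; hence $D\chi_E=0$ there. These local identities patch together by a partition of unity. The result is the equality of Radon measures
\[
 |D\chi_E|=\Hm^{n-1}\llcorner(\{u_j=t\}\cap\Omega)
\]
on $\Omega$. Evaluating both measures on $\Omega$ gives the displayed identity, including the case where both sides are infinite. By the coarea formula, that case occurs only for a null set of $t$, although the identity holds regardless.

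\emph{Step 3: simultaneity with strict convergence.} Corollary~\ref{cor7} gives a subsequence and a full-measure set $T$ of levels at which $\chi_{E_{j,t}}\to\chi_{E_t}$ strictly in $BV(\Omega)$. Intersect $T$ with $\R\setminus N$. The result still has full measure, and at every level in it both conclusions hold together. Since $N$ was built from the whole sequence, it serves every subsequence, so no reselection is needed.

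The only step requiring care is Step 2. The concern there is that the level hypersurface may be noncompact in $\Omega$ and may accumulate at $\partial\Omega$. The local chart argument avoids this difficulty, because both sides are Radon measures on $\Omega$ and are determined by compactly supported test fields. No finiteness of $\Hm^{n-1}(\{u_j=t\})$ and no regularity of $\partial\Omega$ is needed.
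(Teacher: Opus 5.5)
Your proposal is correct and follows the paper's own proof. You remove the countable union of Sard null sets of critical values, identify perimeter with $\Hm^{n-1}$ measure at regular values via the smooth-boundary (divergence theorem) formula, and intersect with the full-measure set of levels from Corollary~\ref{cor7}; your local-chart and partition-of-unity identification $|D\chi_E|=\Hm^{n-1}\llcorner(\{u_j=t\}\cap\Omega)$ as Radon measures on $\Omega$ is a careful rendering of the paper's ``exhaustion if the level set is noncompact.''
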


\begin{proof}
Remove the countable union of the null sets of critical values
given by Sard's theorem~\cite{S1942}. At every remaining value
the regular value theorem gives a smooth hypersurface, and the
superlevel set lies locally on one side of it. The perimeter
identity follows from the smooth-boundary formula, using
exhaustion if the level set is noncompact. Intersect with the
full-measure set supplied by Corollary~\ref{cor7}.
\end{proof}

There is also a useful weighted form in which one integrates over
the full level sets.  The scalar Sobolev coarea formula goes back
to Federer \cite{F1969}; a convenient formulation within the theory
of Sobolev mappings is \cite[Theorem~1.1]{MSZ2003} of Mal\'y,
Swanson and Ziemer. For scalar functions ($m=1$), that theorem
permits every $p\ge1$, including the endpoint $p=1$.
It applies here to the continuous representative, which agrees
at every point with the representative obtained from local averages.
The weighted version follows by first using simple measurable
weights and then the usual approximation argument.

\begin{proposition}\label{prop11}
Under the hypotheses of Corollary~\ref{cor7}, for every $\phi\in C_c(\Omega\times\R)$,
\begin{equation}\label{eq43}
 \begin{split}
 &\int_{\R}\int_{\{u_j=t\}}\phi(x,t)\dd\Hm^{n-1}(x)\dd t\\
 &\hspace{2em}\longrightarrow
 \int_{\R}\int_{\{u=t\}}\phi(x,t)\dd\Hm^{n-1}(x)\dd t.
 \end{split}
\end{equation}
\end{proposition}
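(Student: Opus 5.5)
The plan is to convert both sides of \eqref{eq43} into volume integrals with the weighted coarea formula, and then pass to the limit inside a single integral over $\Omega$. By the weighted Sobolev coarea formula of Mal\'y, Swanson and Ziemer recalled above, applied to the continuous representatives, we have for every bounded Borel $\phi$ with compact support
\[
 \int_{\R}\int_{\{v=t\}}\phi(x,t)\dd\Hm^{n-1}(x)\dd t
 =\int_\Omega\phi(x,v(x))\,|\nabla v(x)|\dd x
\]
for $v=u_j$ and for $v=u$. Both sides are finite, because $|\Omega|<\infty$ and $\nabla v\in L^p\subset L^1$. The statement therefore reduces to showing
\[
 \int_\Omega\phi(x,u_j)|\nabla u_j|\dd x\longrightarrow\int_\Omega\phi(x,u)|\nabla u|\dd x .
\]

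To prove this, split the difference into two terms:
\[
 \int_\Omega\phi(x,u_j)\bigl(|\nabla u_j|-|\nabla u|\bigr)\dd x
 +\int_\Omega\bigl(\phi(x,u_j)-\phi(x,u)\bigr)|\nabla u|\dd x .
\]
The first term is bounded by $\|\phi\|_\infty\|\nabla u_j-\nabla u\|_{L^1(\Omega)}$. This tends to zero because strong $W^{1,p}$ convergence on a set of finite measure implies $L^1$ convergence of the gradients.

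For the second term, use that $\phi$ is uniformly continuous, since it is continuous with compact support. Together with $\sup_\Omega|u_j-u|\to0$, this gives $\sup_{x}|\phi(x,u_j(x))-\phi(x,u(x))|\to0$. The second term is then at most this supremum times $\|\nabla u\|_{L^1(\Omega)}$. Alternatively, pointwise convergence and dominated convergence with the majorant $2\|\phi\|_\infty|\nabla u|$ suffice, so uniform convergence is convenient here but not essential.

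The only delicate point is the justification of the weighted coarea identity for merely continuous Sobolev functions. This includes the fact that the level sets $\{v=t\}$ are taken for the continuous representative, and that the $t$-integrand is measurable. I would cite \cite[Theorem~1.1]{MSZ2003} for characteristic weights $\chi_{A}(x)\chi_{I}(t)$, where $A$ is Borel and $I$ is an interval. From there I would pass to products and their finite sums by linearity, and then to general bounded Borel $\phi$ by a monotone class argument. The measure-theoretic details are routine once the unweighted scalar formula is available for every $p\ge1$, as the paper notes. After this step, everything else is the elementary estimate above.
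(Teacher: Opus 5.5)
Your proposal is correct and follows the paper's proof. Both identify the two sides, via the weighted coarea formula, with $\int_\Omega\phi(x,v(x))\,|\nabla v(x)|\,dx$ for $v=u_j$ and $v=u$. Both then split the difference into a weight term, controlled by uniform convergence of $u_j$ and continuity of $\phi$, and a gradient term, controlled by $L^1$ convergence of $|\nabla u_j|$.

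Your monotone-class justification of the weighted identity is heavier than necessary. Since $\phi(x,t)=\phi(x,v(x))$ on $\{v=t\}$, it suffices to apply the weighted coarea formula to the single Borel weight $x\mapsto\phi(x,v(x))$, split into its positive and negative parts.
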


\begin{proof}
Coarea identifies the two expressions with
\[
 \int_\Omega\phi(x,u_j(x))|\nabla u_j(x)|\dd x
 \quad\text{and}\quad
 \int_\Omega\phi(x,u(x))|\nabla u(x)|\dd x,
\]
respectively.
On the compact projection of $\supp\phi$ to $\Omega$, the
functions $u_j$ converge uniformly to $u$, so
$\phi(x,u_j(x))\to\phi(x,u(x))$ uniformly there.  Also
$|\nabla u_j|\to|\nabla u|$ in $L^1(\Omega)$.  Splitting the
difference of these integrals into the difference of the weights and
the difference of the gradient norms proves \eqref{eq43}.
\end{proof}

\begin{remark}\label{rem12}
When smooth monotone approximants are available, their regular
level sets are smooth hypersurfaces on which the averaged coarea
integrals can be evaluated. Along a subsequence, the associated
superlevel sets converge strictly in $BV$ at almost every level.
These statements do not assert convergence of curvature or
preservation of the topology of individual level sets.  The
metric coarea inequality of Esmayli, Ikonen and Rajala \cite{EIR2023}
and the subsequent result of Meier and Ntalampekos \cite{MN2024}
address a different issue, namely the validity of coarea estimates
when the ambient surface has only a metric structure.  Neither
supplies the local smooth replacement required in \cite[Question~1.7]{N2020a}.
\end{remark}

\section{Nonlinear fluxes and approximate field equations}\label{sec13}

Let $D\subset\R^n$ be bounded and open. For $1<p<\infty$ define
\begin{equation*}
 A_p(\xi)=|\xi|^{p-2}\xi,
 \qquad V_p(\xi)=|\xi|^{(p-2)/2}\xi,
 \qquad p'=\frac{p}{p-1},
\end{equation*}
with both maps set equal to zero at the origin.  The first is the
constitutive flux of the $p$-Laplace equation.  The second is a
standard change of gradient variable in its nonlinear error analysis;
see \cite{BL1993,DK2008}.  Notice the identity
$|V_p(\xi)|^2=|\xi|^p$.

For a bounded open set $D$, we use the equivalent negative Sobolev
norm
\begin{equation}\label{eq44}
 \|T\|_{W^{-1,p'}(D)}
  =\sup\{|\langle T,\varphi\rangle|:
       \varphi\in W^{1,p}_0(D),\ \|\nabla\varphi\|_{L^p(D)}\le1\}.
\end{equation}
Thus $W^{-1,p'}(D)$ is the dual of $W^{1,p}_0(D)$, equipped here
with the gradient norm; see \cite{B2011}.  If a smooth function has
critical points and $p<2$, its $p$-Laplacian below is understood in
the distributional sense.

\begin{proposition}\label{prop12}
Suppose that $w_j\to u$ strongly in $W^{1,p}(D)$.  Then
\begin{equation}\label{eq45}
 A_p(\nabla w_j)\longrightarrow A_p(\nabla u)
     \quad\text{in }L^{p'}(D,\R^n),
 \qquad
 V_p(\nabla w_j)\longrightarrow V_p(\nabla u)
     \quad\text{in }L^2(D,\R^n).
\end{equation}
More precisely, with all norms taken on $D$,
\begin{equation}\label{eq46}
 \|A_p(\nabla w)-A_p(\nabla u)\|_{p'}
 \le
 \begin{cases}
 C_p(\|\nabla w\|_p+\|\nabla u\|_p)^{p-2}
           \|\nabla w-\nabla u\|_p,&p\ge2,\\
 C_p\|\nabla w-\nabla u\|_p^{p-1},&1<p<2.
 \end{cases}
\end{equation}
In general,
\[
 \|\Delta_pw_j-\Delta_pu\|_{W^{-1,p'}(D)}
  \le\|A_p(\nabla w_j)-A_p(\nabla u)\|_{p'}\longrightarrow0.
\]
If $u$ is $p$-harmonic in $D$, then
\begin{equation}\label{eq47}
 \|\Delta_pw_j\|_{W^{-1,p'}(D)}
  \le\|A_p(\nabla w_j)-A_p(\nabla u)\|_{p'}\longrightarrow0.
\end{equation}
\end{proposition}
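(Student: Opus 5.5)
The plan rests on the pointwise estimates for the vector field $A_p$ on $\R^n$. For $p\ge2$ I would use
\[
 |A_p(\xi)-A_p(\zeta)|\le C_p(|\xi|+|\zeta|)^{p-2}|\xi-\zeta|,
\]
which follows by integrating $DA_p$ along the segment from $\zeta$ to $\xi$; note $|DA_p(\eta)|\le(p-1)|\eta|^{p-2}$ and $|\eta|\le|\xi|+|\zeta|$ on the segment. For $1<p<2$ I would use the H\"older continuity
\[
 |A_p(\xi)-A_p(\zeta)|\le C_p|\xi-\zeta|^{p-1},
\]
a standard consequence of homogeneity of degree $p-1$. Scale so that $|\xi|\ge|\zeta|$ and distinguish two cases. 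If $|\xi-\zeta|\ge|\xi|/2$, bound each term by $|\xi|^{p-1}$. Otherwise the segment avoids $B(0,|\xi|/2)$, and the derivative bound gives $C|\xi|^{p-2}|\xi-\zeta|\le C|\xi-\zeta|^{p-1}$.

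Integrating these bounds gives \eqref{eq46}. For $1<p<2$, raise the pointwise bound to the power $p'$; since $(p-1)p'=p$, this yields $\|A_p(\nabla w)-A_p(\nabla u)\|_{p'}^{p'}\le C\|\nabla w-\nabla u\|_p^{p}$, and taking the $p'$-th root gives the stated form. For $p\ge2$, apply H\"older's inequality with exponents $\frac{p-1}{p-2}$ and $p-1$ to
\[
 \int(|\nabla w|+|\nabla u|)^{(p-2)p'}|\nabla w-\nabla u|^{p'}.
\]
Here $(p-2)p'\cdot\frac{p-1}{p-2}=p$ and $p'(p-1)=p$, so the integral is at most $\|\,|\nabla w|+|\nabla u|\,\|_p^{(p-2)p'}\|\nabla w-\nabla u\|_p^{p'}$. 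Taking the $p'$-th root and using Minkowski's inequality gives the bound. Strong convergence $\nabla w_j\to\nabla u$ in $L^p$, together with boundedness of $\|\nabla w_j\|_p$, then gives the first convergence in \eqref{eq45}.

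For $V_p$ I would use the analogous pointwise inequality
\[
 |V_p(\xi)-V_p(\zeta)|^2\le C_p\,(|\xi|+|\zeta|)^{p-2}|\xi-\zeta|^2\quad(p\ge2),
 \qquad
 |V_p(\xi)-V_p(\zeta)|^2\le C_p|\xi-\zeta|^p\quad(p<2),
\]
proved by the same segment argument applied to the homogeneous map of degree $p/2$. The $L^2$ integrals are then handled by H\"older exactly as above, with exponents $\frac p{p-2}$ and $\frac p2$ in the first case. Alternatively, and more economically, I would argue by subsequences. Along a subsequence, $\nabla w_j\to\nabla u$ almost everywhere, so $V_p(\nabla w_j)\to V_p(\nabla u)$ almost everywhere. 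Moreover $|V_p(\nabla w_j)|^2=|\nabla w_j|^p$ is uniformly integrable, because $|\nabla w_j|^p\to|\nabla u|^p$ in $L^1$ (as in Theorem~\ref{thm7}). Vitali's theorem then gives $L^2$ convergence, and the subsequence criterion extends it to the whole sequence. The same Vitali argument would also reprove the $A_p$ convergence, but the quantitative form \eqref{eq46} needs the pointwise estimates.

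The negative-norm statement is a duality step. For $\varphi\in W^{1,p}_0(D)$ with $\|\nabla\varphi\|_p\le1$, the distributional definition extended by density gives
\[
 \langle\Delta_pw_j-\Delta_pu,\varphi\rangle=-\int_D(A_p(\nabla w_j)-A_p(\nabla u))\cdot\nabla\varphi .
\]
H\"older's inequality bounds this by $\|A_p(\nabla w_j)-A_p(\nabla u)\|_{p'}$. If $u$ is $p$-harmonic, the weak equation extends to $W^{1,p}_0(D)$ test functions, as in Proposition~\ref{prop1}, so $\Delta_pu=0$ in $W^{-1,p'}(D)$, and \eqref{eq47} follows. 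The only step needing care is the pointwise H\"older bound for $A_p$ when $1<p<2$, where $A_p$ is not Lipschitz near $0$; the two-case scaling argument above handles it, so I do not expect a genuine obstacle.
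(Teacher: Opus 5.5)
Your proposal is correct and follows essentially the paper's route. It uses the same pointwise estimates for $A_p$ (segment integration for $p\ge2$, the two-case argument at scale $\max\{|\xi|,|\zeta|\}$ for $1<p<2$), H\"older's inequality to reach \eqref{eq46}, the subsequence--Vitali argument with uniform integrability of $|\nabla w_j|^p$ for $V_p$, and testing the flux difference against $\nabla\varphi\in W^{1,p}_0(D)$ for the negative-norm bounds. Your optional quantitative H\"older-continuity bounds for $V_p$ go beyond what the paper uses, but they are also valid.
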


\begin{proof}
The elementary vector inequalities behind \eqref{eq46} are
\[
 |A_p(\xi)-A_p(\zeta)|\le
 \begin{cases}
 C_p(|\xi|+|\zeta|)^{p-2}|\xi-\zeta|,&p\ge2,\\
 C_p|\xi-\zeta|^{p-1},&1<p<2.
 \end{cases}
\]
For $p\ge2$ they follow by integrating the derivative of $A_p$
on the line segment.  For $p<2$, put $d=|\xi-\zeta|$ and
$R_0=\max\{|\xi|,|\zeta|\}$.  If $d\ge R_0/2$, the triangle
inequality gives the claimed bound.  If $d<R_0/2$, the segment stays
at distance at least $R_0/2$ from zero.  The derivative bound gives
$C_pR_0^{p-2}d\le C_pd^{p-1}$. The case $d=0$ is immediate.
H\"older's inequality proves \eqref{eq46}; for $p>2$ the exponents
are $p/(p-2)$ and $p$, and for $p=2$ the estimate is the identity
for the linear flux.

For the assertion about $V_p$, every subsequence has a further
subsequence on which the gradients converge almost everywhere.
Continuity of $V_p$ gives pointwise convergence there, and
\[
 |V_p(\nabla w_j)-V_p(\nabla u)|^2
 \le 2|\nabla w_j|^p+2|\nabla u|^p.
\]
The right-hand side is uniformly integrable by strong $L^p$
convergence.  Vitali's theorem and the subsequence criterion prove
the $L^2$ convergence in \eqref{eq45}.

Testing the difference of the fluxes against $\nabla\varphi$
first proves the stated bound for $\Delta_pw_j-\Delta_pu$.
When $\Delta_pu=0$, the same computation gives
\[
 |\langle\Delta_pw_j,\varphi\rangle|
 =\left|\int_D\bigl(A_p(\nabla w_j)-A_p(\nabla u)\bigr)
                    \cdot\nabla\varphi\dd x\right|.
\]
H\"older's inequality and \eqref{eq44} imply \eqref{eq47}.
\end{proof}

\begin{corollary}\label{cor10}
Let $u\in W^{1,p}(D)$ be nonconstant and planar $p$-harmonic, where
$D$ is bounded and connected.  There are smooth strictly monotone
Morse functions $w_j$ with $w_j-u\in W^{1,p}_0(D)$, converging to
$u$ uniformly and strongly in $W^{1,p}$, whose fluxes and transformed
gradients satisfy \eqref{eq45}, and whose equation residuals satisfy
\eqref{eq47}.  Every critical point of $w_j$ is a saddle.
If the changes are confined to a compact set $K\subset D$, then
$\supp(\Delta_pw_j)\subset K$.
\end{corollary}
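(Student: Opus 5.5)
The plan is to take the smooth Morse approximants from Corollary~\ref{cor2}, which Proposition~\ref{prop7} produces with $S=\Crit(u)$, and then read off the flux statements from Proposition~\ref{prop12}.

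First, since $u$ is nonconstant and planar $p$-harmonic on the connected set $D$, the set $\Crit(u)$ is closed and discrete in $D$. This follows from the quasiregularity of $F_u$ and the convention that quasiregular maps are open and discrete unless constant; a constant $F_u$ would have to be zero on a critical set, and we exclude that case. Off $\Crit(u)$ the function $u$ is smooth. Proposition~\ref{prop7} then applies to $v=u$ with $S=\Crit(u)$, taking tolerances $\eta_j\downarrow0$ and $\delta_j\downarrow0$. It gives smooth monotone $w_j$ with $w_j-u\in W^{1,p}_0(D)$, $\|w_j-u\|_{C^1(D)}+\|w_j-u\|_{W^{1,p}(D)}<\eta_j$, and new critical points that are nondegenerate saddles by Theorem~\ref{thm3}. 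Away from the replacement disks $\nabla w_j=\nabla u\ne0$, so every critical point of $w_j$ is a nondegenerate saddle. Hence $w_j$ is Morse and has no local extrema, which makes it strictly monotone. If $\Crit(u)=\emptyset$, we take $w_j=u$; this function is already smooth and has no critical points. Uniform convergence comes from the $C^1$ bound, and strong convergence is immediate.

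Next, Proposition~\ref{prop12} applied with $D$ gives \eqref{eq45}, and since $u$ is $p$-harmonic it also gives \eqref{eq47}.

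For the support statement, assume $\supp(w_j-u)\subset K$. On the open set $D\setminus K$ we have $w_j=u$, hence $\nabla w_j=\nabla u$ there. For $\varphi\in C_c^\infty(D\setminus K)$ this gives $\langle\Delta_pw_j,\varphi\rangle=-\int A_p(\nabla u)\cdot\nabla\varphi=0$. So the distribution $\Delta_pw_j$ vanishes off $K$.

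The main point to check is the applicability of Proposition~\ref{prop7}, in particular that each critical point has a neighborhood on which $u$ is nonconstant; connectedness of $D$ together with nonconstancy guarantees this. A further subtlety arises when $\Crit(u)$ is infinite. Confinement to a compact $K$ is then only possible when finitely many points need replacement, so the final assertion should be read conditionally, as stated.
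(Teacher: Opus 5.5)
Your proposal is correct and follows the paper's proof. You obtain the Morse approximants from the second assertion of Corollary~\ref{cor2} (that is, Proposition~\ref{prop7} with $S=\Crit(u)$), read off \eqref{eq45} and \eqref{eq47} from Proposition~\ref{prop12}, and prove the support assertion by testing against $\varphi\in C_c^\infty(D\setminus K)$, where $\nabla w_j=\nabla u$. Your added remark, that confinement to a compact $K$ is possible only when finitely many critical points need replacement, is consistent with the conditional form of the last assertion.
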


\begin{proof}
Apply the second assertion of Corollary~\ref{cor2} and then
Proposition~\ref{prop12}.  The resulting functions have only saddle
critical points and hence have no local extrema.  If $w_j=u$
outside $K$, the two fluxes agree there almost everywhere.  Testing
against functions supported in $D\setminus K$ proves the support
assertion.
\end{proof}

This has a direct interpretation for a scalar potential in a fixed
planar region with a power-law flux.  With a constant $\kappa>0$,
write
\begin{equation*}
 \mathbf J=-\kappa A_p(\nabla u),\qquad \diver\mathbf J=0.
\end{equation*}
The approximation provides a smooth potential $w_j$ in the same
boundary class, without introducing interior maxima or minima, and
$\mathbf J_j=-\kappa A_p(\nabla w_j)$ satisfies
\[
 \|\mathbf J_j-\mathbf J\|_{L^{p'}(D)}\to0,
 \qquad \|\diver\mathbf J_j\|_{W^{-1,p'}(D)}\to0.
\]
Power-law Hele--Shaw flow gives one source of this elliptic
constitutive law; see Aronsson and Janfalk \cite{AJ1992}.  The
assertion concerns the elliptic potential on a fixed source-free
region.  A time-dependent free boundary requires separate estimates.

The same convergence is compatible with the nonlinear error variables
used in finite element analysis \cite{BL1993,DK2008}.  It permits a
smooth monotone potential to be used as an intermediate approximation
while controlling its flux and weak equation error.  The argument does
not by itself select a mesh, prove a convergence rate, or show that
interpolation of the smooth function preserves monotonicity.

\section{Morse approximation and persistence of sublevel sets}\label{sec14}

We give a further planar topological application in which the uniform error
has an independent role.  Fix a compact triangulable space $X$ and
a coefficient field $\mathbb F$.  For a continuous function
$f:X\to\R$, its sublevel-set filtration is
\begin{equation*}
 X_t^f=\{x\in X:f(x)\le t\},\qquad t\in\R.
\end{equation*}
Inclusions for $s\le t$ induce maps on singular homology
$H_k(X_s^f;\mathbb F)\to H_k(X_t^f;\mathbb F)$.
The resulting family is the degree-$k$ persistence module.  We use
the tame setting of \cite{CEH2007}: these homology groups are finite
dimensional and only finitely many parameter values change the
persistent homology.  Its persistence diagram $\Dgm_k(f)$ records
the birth and death parameters of the interval summands, with
multiplicities.  An essential interval has death parameter $+\infty$.
Definitions and the interval interpretation are discussed in
\cite{CEH2007,EH2010}.

The bottleneck distance $d_B$ is the infimum, over matchings of two
diagrams after adjoining the diagonal with unlimited multiplicity,
of the largest $\ell^\infty$ distance between matched points.
The distance of a finite point $(b,d)$ to the diagonal is
$(d-b)/2$.  Essential intervals are matched to essential intervals
using the distance between their birth parameters.  The stability
theorem of Cohen-Steiner, Edelsbrunner and Harer states that
\begin{equation}\label{eq48}
 d_B(\Dgm_k(f),\Dgm_k(g))\le\|f-g\|_{L^\infty(X)}
\end{equation}
for continuous tame functions in this setting \cite{CEH2007}.

\begin{proposition}\label{prop13}
Assume the hypotheses of Theorem~\ref{thm3}, and let
$X\subset V$ be compact and triangulable.  For every $\eta>0$
there is a smooth strictly monotone replacement $w$ satisfying all
the conclusions of that theorem and
\begin{equation}\label{eq49}
 \|w-u\|_{W^{1,p}(V)}<\eta,
 \qquad \|w-u\|_{L^\infty(X)}<\eta.
\end{equation}
For every degree $k$ in which $u|_X$ and $w|_X$ are tame,
\begin{equation*}
 d_B(\Dgm_k(u|_X),\Dgm_k(w|_X))<\eta.
\end{equation*}
In particular, a finite persistence interval of length greater than
$2\eta$ cannot be removed by matching it to the diagonal within
this error bound.
\end{proposition}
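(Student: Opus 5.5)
The plan is to apply Theorem~\ref{thm3} directly with error budget $\eta$ and then invoke the stability theorem \eqref{eq48} of Cohen-Steiner, Edelsbrunner and Harer. No new construction is needed. The content of the proposition lies in how the two error norms are used: the uniform error on $X$ feeds the bottleneck estimate, and the Sobolev error is inherited unchanged.

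\textbf{Step 1: the replacement.} Apply Theorem~\ref{thm3} with the given neighborhood $O$ and tolerance $\eta$. This produces a smooth strictly monotone $w$ with exactly $m$ nondegenerate saddles in $O$ and pairwise distinct critical values. The difference $w-u$ has compact support in $O$, and
\[
 \|w-u\|_{C^1(V)}+\|w-u\|_{W^{1,p}(V)}<\eta .
\]
Since $\|w-u\|_{C^1(V)}$ dominates $\sup_V|w-u|$, and $X\subset V$, this gives both inequalities in \eqref{eq49}. Measurability is not an issue, because $w-u$ is continuous.

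\textbf{Step 2: persistence stability.} The restrictions $u|_X$ and $w|_X$ are continuous on the compact triangulable space $X$. For a degree $k$ in which both are tame, \eqref{eq48} applies and yields
\[
 d_B(\Dgm_k(u|_X),\Dgm_k(w|_X))\le\sup_X|w-u|<\eta .
\]
The inequality is strict because the supremum of the continuous function $|w-u|$ over the compact set $X$ is attained and is therefore strictly below $\eta$.

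\textbf{Step 3: the final assertion.} Let $(b,d)$ be a finite interval of length $d-b>2\eta$. Its $\ell^\infty$ distance to the diagonal is $(d-b)/2>\eta$. A matching of cost less than $\eta$ therefore cannot send this point to the diagonal. Such a matching exists by Step 2, since the bottleneck distance is an infimum that is strictly below $\eta$. Hence the interval must be matched to an off-diagonal point of the other diagram.

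\textbf{Expected obstacle.} The only delicate point is interpretive rather than analytic. Tameness is assumed, not proved, and the stability theorem is quoted only in the tame setting. I would therefore state explicitly that \eqref{eq48} is applied degree by degree under that hypothesis, and make no claim in degrees where tameness fails.
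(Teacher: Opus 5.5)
Your proposal is correct and follows essentially the same route as the paper. It chooses $w$ from Theorem~\ref{thm3} with error below $\eta$, applies the stability bound \eqref{eq48} with $\delta=\sup_X|w-u|<\eta$, and uses the diagonal distance $(d-b)/2>\eta$ for the final assertion. The attainment argument in your Step~2 is harmless but unnecessary, since $\sup_X|w-u|\le\|w-u\|_{C^1(V)}<\eta$ already gives the strict inequality.
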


\begin{proof}
Choose $w$ from Theorem~\ref{thm3} with error smaller than $\eta$.
This gives \eqref{eq49}.  If
$\delta=\|w-u\|_{L^\infty(X)}$, then for every $t$
\[
 X_t^u\subset X_{t+\delta}^w,
 \qquad X_t^w\subset X_{t+\delta}^u.
\]
These inclusions are the two shifted comparisons underlying
persistence stability.  Applying \eqref{eq48} gives
$d_B\le\delta<\eta$.  A matching with cost less than $\eta$
exists by the definition of the infimum.  A point whose distance to
the diagonal is greater than $\eta$ cannot be matched to it, proving
the last assertion.
\end{proof}

\begin{remark}\label{rem13}
The new geometric input in Proposition~\ref{prop13} is the ability
to replace an isolated $p$-harmonic critical point by precisely the
number of Morse saddles dictated by its index, while controlling
both the uniform and Sobolev errors and leaving the exterior fixed.
The diagram estimate is an application of the established stability
theorem.  Tameness of restrictions to an arbitrary compact $X$ does
not follow from interior Morse nondegeneracy and is an explicit
hypothesis.  Likewise, preservation of long persistence intervals
does not imply equality of every Betti number at every level.
\end{remark}

\section*{Acknowledgments}
This work was supported by the Visiting Scholar Program of Chern Institute of Mathematics at Nankai University. The author would
like to express his hearty thanks to Chern Institute of Mathematics provided very comfortable research environments to him worked as visiting scholar.
This work was also supported by the Guangdong Basic and Applied Basic Research Foundation (Nos.\ 2022A1515110967 and 2023A1515011809).  AI-assisted tools were used during the preparation and revision of the manuscript for mathematical checking, literature comparison, editorial organization, and typesetting. The author takes full responsibility for the mathematical statements, derivations, citations, and conclusions in the final manuscript.

\end{document}